\newcommand{\rs}{\Sigma^-}
\newcommand{\pihat}{\hat{\pi}}
\newcommand{\tent}{\Lambda}
\newcommand{\st}{\mbox{st}}
\DeclareMathOperator{\Pat}{Pat}
\newcommand{\PPat}{\Pi}
\newcommand{\PPPat}{\overline{\Pi}}
\DeclareMathOperator{\Av}{Av}
\DeclareMathOperator{\des}{des}
\DeclareMathOperator{\Des}{Des}
\DeclareMathOperator{\asc}{asc}
\DeclareMathOperator{\Asc}{Asc}
\newcommand{\Allow}{\mathcal A}
\renewcommand\S{\mathcal S}
\renewcommand\SS{\overline{\S}}
\newcommand\C{\mathcal C}
\newcommand\hp{\hat{\pi}}
\newcommand\W{\mathcal W}
\newcommand\WW{\overline{\mathcal W}}
\newcommand\PP{\mathcal P}
\newcommand\PPP{\overline{\mathcal P}}
\renewcommand\O{\mathcal O}
\newcommand\lex{<_\mathrm{lex}}
\newtheorem{theorem}{Theorem}[section]
\newtheorem{lemma}[theorem]{Lemma}
\newtheorem{proposition}[theorem]{Proposition}
\newtheorem{corollary}[theorem]{Corollary}
\title{Cyclic permutations realized by signed shifts}
\author{Kassie Archer and Sergi Elizalde~\thanks{Department of Mathematics,
Dartmouth College, Hanover, NH 03755.}
\thanks{Partially supported by NSF grant DMS-1001046.}}
\date{}
\begin{document}

\maketitle

\begin{abstract}
The periodic (ordinal) patterns of a map are the permutations realized by the relative order of the points in its periodic orbits.
We give a combinatorial characterization of the periodic patterns of an arbitrary signed shift,
in terms of the structure of the descent set of a certain cyclic permutation associated to the pattern.
Signed shifts are an important family of one-dimensional dynamical systems that includes shift maps and the tent map as particular cases.
Defined as a function on the set of infinite words on a finite alphabet, a signed shift deletes the first letter and, depending on its value,
possibly applies the complementation operation on the remaining word.
 For shift maps, reverse shift maps, and the tent map, we give exact formulas for their number of periodic patterns.
As a byproduct of our work, we recover results of Gessel--Reutenauer and Weiss--Rogers and obtain new enumeration formulas for pattern-avoiding cycles.
\end{abstract}

\noindent {\bf Keywords:} Periodic pattern; signed shift; cyclic permutation; descent; pattern avoidance; reverse shift; periodic orbit.


\section{Introduction}

\subsection{Background and motivation}

Permutations realized by the orbits of a map on a one-dimensional interval have received a significant amount of attention in the last few years~\cite{Amigobook}.
These are the permutations given by the relative order of the elements of the sequence obtained by successively iterating the map, starting from any point in the interval.
One the one hand, understanding these permutations provides a powerful tool to distinguish random from deterministic time series,
based on the remarkable fact~\cite{BKP} that every piecewise monotone map has forbidden patterns, i.e.,
permutations that are not realized by any orbit. Permutation-based tests for this purpose have been developed in~\cite{AZS,AZS2}.
On the other hand, the set of permutations realized by a map (also called allowed patterns) is closed under consecutive pattern containment. This gives rise to enumerative questions about
pattern-avoiding permutations whose answer provides information about the associated dynamical systems. For example, determining the asymptotic growth of the number of
allowed patterns of a map reveals its so-called topological entropy, an important measure of the complexity of the system.

Among the dynamical systems most commonly studied from the perspective of forbidden patterns are shifts, and more generally signed shifts~\cite{Amigosigned}. Signed shifts form a large family of maps that includes
the tent map, which is equivalent to the logistic map in terms of forbidden patterns.
As we will see, signed shifts have a simple discrete structure which makes them amenable to a combinatorial approach, yet they include many important chaotic dynamical systems.

Permutations realized by shifts were first considered in~\cite{AEK}, and later characterized and enumerated in~\cite{Elishifts}. More recently, permutations realized by the more general $\beta$-shifts have been studied in~\cite{Elibeta}.
For the logistic map, some properties of their set of forbidden patterns were given in~\cite{Eliu}.

If instead of considering an arbitrary initial point in the domain of the map we restrict our attention to periodic points, the permutations realized by the relative order of the entries in the corresponding orbits (up until the first repetition) are called {\em periodic patterns}.
In the case of continuous maps, Sharkovskii's theorem~\cite{Sarko} gives a beautiful characterization of the possible periods of these orbits.
More refined results that consider which periodic patterns are forced by others are known for continuous maps~\cite{BCMM,Blo,Bobok,Jun}. In an equivalent form, periodic orbits of the tent map were studied in~\cite{WR}
in connection to bifurcations of stable periodic orbits in a certain family of quadratic maps.
However, little is known when the maps are not continuous, as is the case for shifts and for most signed shifts.

The subject of study of this paper are periodic patterns of signed shifts. Our main result is a characterization of the periodic patterns of an (almost) arbitrary signed shift, given in Theorem~\ref{thm: description of periodic patterns}.
For some particular cases of signed shifts we obtain exact enumeration formulas: the number of periodic patterns of the tent map is given in Theorem~\ref{thm:enumtent}, recovering a formula of Weiss and Rogers~\cite{WR}, and the number of periodic patterns of the (unsigned) shift map is given in Theorem~\ref{thm: enum shift}. For the reverse shift, which is not covered in our main theorem, formulas for the number of periodic patterns, which depend on the residue class of $n\bmod 4$, are given in Sections~\ref{sec:reverse} and~\ref{sec:reverse2odd}.

An interesting consequence of our study of periodic patterns is that we obtain new results (and some old ones)
regarding the enumeration of cyclic permutations that avoid certain patterns. These are described in Section~\ref{sec:patternsincycles}.

\subsection{Periodic patterns}

Given a linearly ordered set $X$ and a map $f:X\to X$, consider the sequence $\{f^i(x)\}_{i\ge0}$
obtained by iterating the function starting at a point $x\in X$.
If there are no repetitions among the first $n$ elements of this sequence, called the {\em orbit} of $x$,
then we define the {\em pattern} of length $n$ of $f$ at $x$ to be
$$\Pat(x, f, n) = \st(x, f(x), f^2(x), \dots, f^{n-1}(x)),$$
where $\st$ is the {\em reduction} operation that outputs the permutation of $[n]= \{1,2,\dots, n\}$ whose entries are in the same relative order as $n$ entries in the input.
For example, $\st(3.3,3.7,9,6,0.2)=23541$. If $f^i(x)=f^j(x)$ for some $0\le i<j<n$, then $\Pat(x, f, n)$ is not defined. The set of {\em allowed patterns} of $f$ is
$$\Allow(f)=\{\Pat(x,f,n):n\ge0,x\in X\}.$$

We say that $x \in X$ is an {\em $n$-periodic point} of $f$ if $f^n(x) = x$ but $f^i(x)\neq x$ for $1\le i<n$, and in this case the set $\{f^i(x):0\le i<n\}$ is called an $n$-periodic orbit.
If $x$ is an $n$-periodic point, the permutation $\Pat(x, f, n)$ is denoted $\PPat(x,f)$, and is called the {\em periodic pattern} of $f$ at $x$.
Let $$\PP_n(f)=\{\PPat(x,f):x\in X \mbox{ is an $n$-periodic point of $f$}\},$$ and let
$\PP(f)=\bigcup_{n\ge0}\PP_n(f)$ be the set of {\em periodic patterns} of $f$.
For a permutation $\pi=\pi_1\pi_2\dots\pi_n\in\S_n$, let $[\pi]=\{\pi_i\pi_{i+1}\dots\pi_n\pi_1\dots\pi_{i-1}:1\le i\le n\}$ denote the set of cyclic rotations of $\pi$, which we call the {\em equivalence class} of $\pi$.
It is clear that if $\pi\in\PP(f)$, then $[\pi]\subset\PP(f)$. Indeed, if $\pi$ is the periodic pattern at a point $x$, then the other permutations in $[\pi]$ are realized at the other points in the periodic orbit of $x$.
Thus it is convenient to define the set $\PPP_n(f)=\{[\pi]:\pi\in\PP_n(f)\}$, consisting of the equivalence classes of periodic patterns of $f$ of length $n$. We will be interested in finding the cardinality of this for certain maps $f$, which we denote $p_n(f)=|\PPP_n(f)|=|\PP_n(f)|/n$.

Given linearly ordered sets $X$ and $Y$, two maps $f:X\to X$ and $g:Y\to Y$ are said to be {\em order-isomorphic} if there is an order-preserving bijection $\phi:X\to Y$ such that
$\phi\circ f=g\circ\phi$. In this case, $\Pat(x,f,n)=\Pat(\phi(x),g,n)$ for every $x\in X$ and $n\ge1$. In particular, $\Allow(f)=\Allow(g)$ and $\PP(f)=\PP(g)$.

\subsection{Signed shifts}

Let $k\ge2$ be fixed, and let $\W_k$ be the set of infinite words $s=s_1s_2\dots$ over the alphabet $\{0,1,\dots,k-1\}$. Let $\lex$ denote the lexicographic order on these words.
We use the notation $s_{[i,\infty)}=s_is_{i+1}\dots$, and $\bar{s_i}=k-1-s_i$. If $q$ is a finite word, $q^m$ denotes concatenation of $q$ with itself $m$ times, and $q^\infty$ is an infinite periodic word.

Fix $\sigma=\sigma_0\sigma_1\dots\sigma_{k-1} \in \{+,-\}^k$. Let $T^+_\sigma = \{t :  \sigma_t = +\}$ and $T^-_\sigma=\{ t  :  \sigma_t = -\}$, and note that these sets form a partition of $\{0,1,\dots,k-1\}$.
We give two definitions of the signed shift with signature $\sigma$, and show that they are order-isomorphic to each other.

The first definition, which we denote by $\Sigma'_\sigma$, is the map $\Sigma'_\sigma:(\W_k,\lex)\to(\W_k,\lex)$ defined by
$$\Sigma'_\sigma(s_1s_2s_3s_4\dots)=\begin{cases} s_2s_3s_4\dots & \mbox{if }s_1\in T^+_\sigma, \\
\bar{s_2}\bar{s_3}\bar{s_4}\dots & \mbox{if }s_1\in T^-_\sigma. \end{cases}$$
The order-preserving transformation
$$\begin{array}{cccc}  \phi_k:&(\W_k,\lex) &\to&([0,1],<) \\
&s_1s_2s_3s_4 \dots &\mapsto& \sum_{i\ge0}  s_i k^{-i-1}
\end{array}$$
can be used to show (see~\cite{Amigosigned}) that $\Sigma'_\sigma$ is order-isomorphic to the piecewise linear function $M_\sigma:[0,1]\to[0,1]$ defined
for $x \in [\frac{t}{k}, \frac{t+1}{k})$, for each $0\le t\le k-1$, as
$$M_\sigma(x)=\begin{cases} kx -t & \mbox{if } t \in T^+_\sigma, \\ t+1-kx & \mbox{if }t \in T^-_\sigma.\end{cases}$$
As a consequence, the allowed patterns and the periodic patterns of $\Sigma'_\sigma$ are the same as those of $M_\sigma$, respectively.
A few examples of the function $M_\sigma$ are pictured in Figure~\ref{fig:signedshifts}.

\begin{figure}[ht]
\centering
   \includegraphics[width=.23\linewidth]{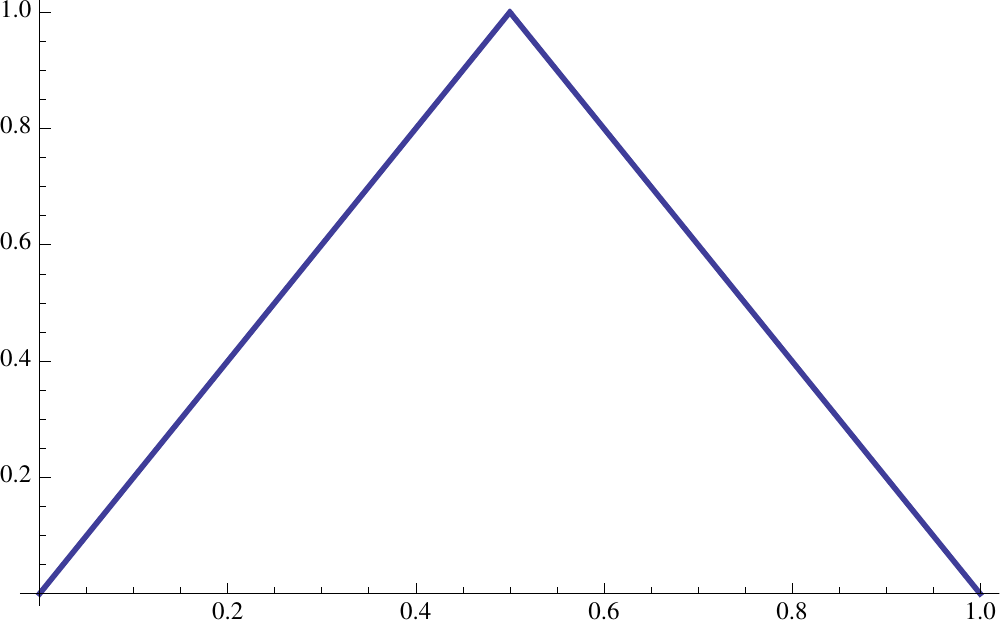}\hspace{.01\linewidth}
    \includegraphics[width=.23\linewidth]{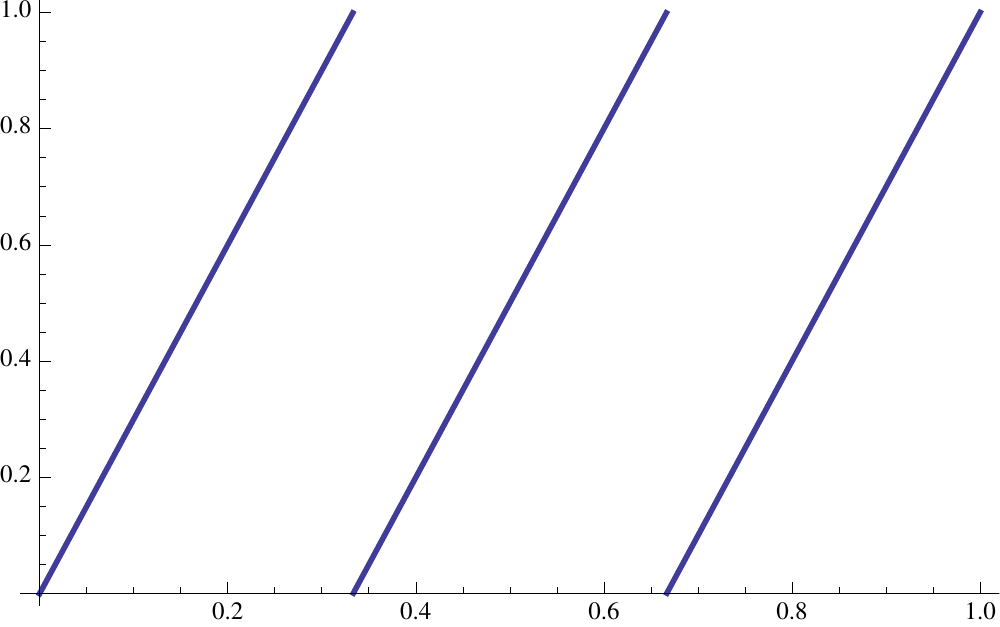}\hspace{.01\linewidth}
    \includegraphics[width=.23 \linewidth]{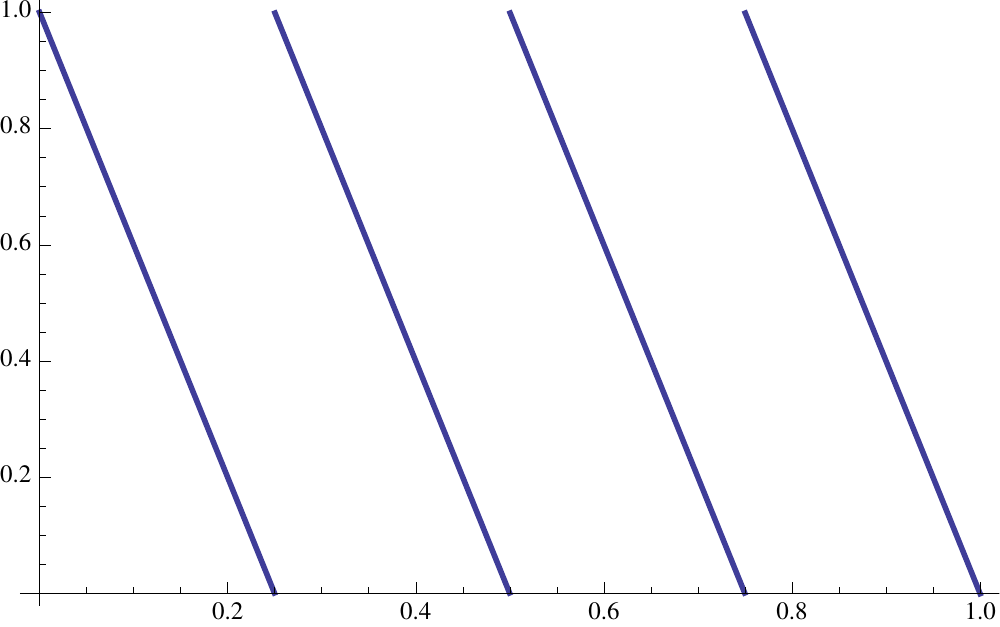}\hspace{.01\linewidth}
    \includegraphics[width=.23\linewidth]{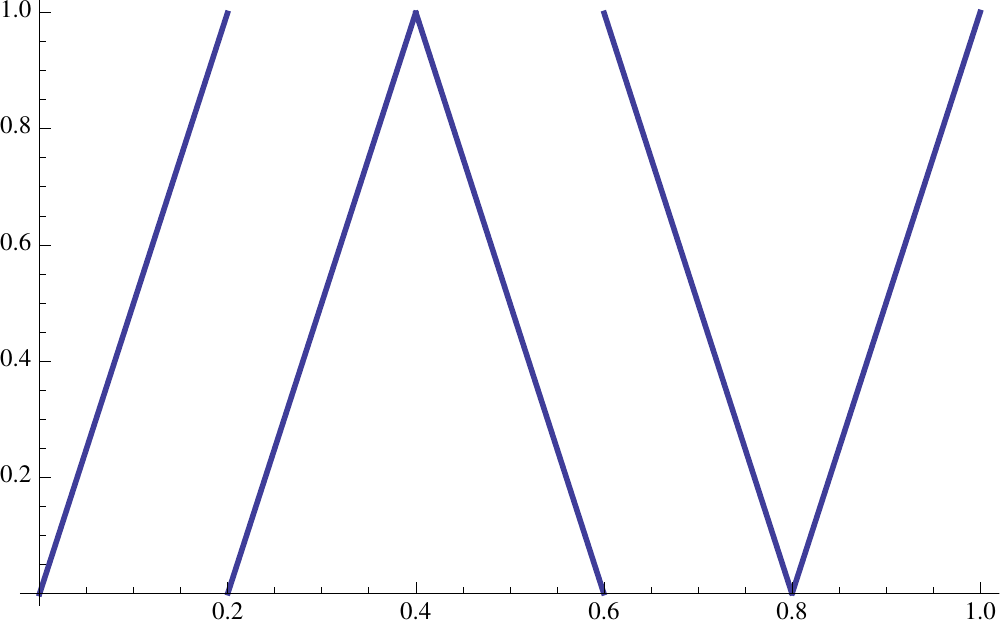}
\caption{The graphs of $M_\sigma$ for $\sigma =+-$, $\sigma =+++$, $\sigma =----$ and $\sigma = + + - - +$, respectively.}
\label{fig:signedshifts}
\end{figure}

We next give another definition of the signed shift that will be more convenient when studying its periodic patterns.
Let $\prec_\sigma$ be the linear order on $\W_k$ defined by $s= s_1s_2s_3\dots\prec_\sigma t_1t_2t_3\dots = t$ if one of the following holds:
\begin{enumerate}
\item $s_1 < t_1$,
\item $s_1 = t_1 \in T^+_\sigma$ and $s_2s_3\dots\prec_\sigma t_2t_3\dots$, or
\item $s_1 = t_1 \in T^-_\sigma$ and $t_2t_3\dots\prec_\sigma s_2s_3\dots$.
\end{enumerate}
Equivalently, $s\prec_\sigma t$ if, letting $j\ge1$ be the smallest such that $s_j \neq t_j$, either
$c:=|\{1\le i<j: s_i\in T^-_\sigma\}|$ is even and $s_j<t_j$, or $c$ is odd and $s_j>t_j$.
The signed shift is the map $\Sigma_\sigma:(\W_k,\prec_\sigma)\to (\W_k,\prec_\sigma)$ defined simply by $\Sigma_\sigma(s_1s_2s_3s_4\dots)=s_2s_3s_4\dots$.

To show that the two definitions of the signed shift as $\Sigma_\sigma$ and $\Sigma'_\sigma$ are order-isomorphic, consider the order-preserving bijection $\psi_\sigma:(\W_k,\prec_\sigma)\to(\W_k,\lex)$ that maps a word $s=s_1s_2s_3\dots$ to the word $a = a_1a_2a_3\dots$ where
$$a_i = \begin{cases} s_i & \mbox{if }|\{j< i : s_j\in T^-_\sigma\}| \mbox{ is even,} \\ \bar{s_i} & \mbox{if }|\{j< i : s_j\in T^-_\sigma\}| \mbox{ is odd.} \end{cases}$$
It is easy to check that $\psi_\sigma\circ\Sigma_\sigma = \Sigma_\sigma'\circ\psi_\sigma$, and so $\PP(\Sigma_\sigma)=\PP(\Sigma_\sigma')$.
Using the definition of the signed shift as $\Sigma_\sigma$, it is clear that its $n$-periodic points are the periodic words in $\W_k$ with period $n$, that is, words of the form
$s = (s_1s_2s_3\dots s_n)^\infty$ where $s_1s_2\dots s_n$ is {\em primitive} (sometimes called {\em aperiodic}), that is, not a concatenation of copies of a strictly shorter word.
We denote by $\W_{k,n}$ the set of periodic words in $\W_k$ with period $n$, and by $\WW_{k,n}$ the set of $n$-periodic orbits, where each orbit consists of the $n$ shifts of an $n$-periodic word.
For example, if $\sigma = +--$, then $s = (00110221)^\infty\in\W_{3,8}$ is an $8$-periodic point of $\Sigma_\sigma$, and $\PPat(s,\Sigma_\sigma)=12453786$.

If $\sigma = +^k$, then $\prec_\sigma$ is the lexicographic order $\lex$, and $\Sigma_\sigma$ is called the {\em $k$-shift}.
When $\sigma=-^k$, the map $\Sigma_\sigma$ is called the {\em reverse $k$-shift}. When $\sigma=+-$, the map $\Sigma_\sigma$ is the well-known {\em tent map}.

\subsection{Pattern avoidance and the $\sigma$-class}

Let $\S_n$ denote the set of permutations of $[n]$. We write permutations in one line notation as $\pi=\pi_1\pi_2\dots\pi_n\in\S_n$.
We say that $\tau \in \S_n$ {\em contains} $\rho \in \S_m$ if there exist indices $i_1<i_2<\dots <i_m$ such that $\st(\tau_{i_1}\tau_{i_2}\dots \tau_{i_m})=\rho_1\rho_2\dots \rho_m$.
Otherwise, we say that $\tau$ {\em avoids} $\rho$. We denote by $\Av(\rho)$ the set of permutations avoiding $\rho$, and we define $\Av(\rho^{(1)},\rho^{(2)},\dots)$ analogously as the set of permutations avoiding
all the patterns $\rho^{(1)},\rho^{(2)},\dots$.
A set of permutations $\mathscr{A}$ is called a {\it (permutation) class} if it is closed under pattern containment, that is, if a $\tau\in\mathscr{A}$ and $\tau$ contains $\rho$, then $\rho\in\mathscr{A}$.
Sets of the form $\Av(\rho^{(1)},\rho^{(2)},\dots)$ are permutation classes.
For example, $\Av(12)$ is the class of decreasing permutations.

Given classes $\mathscr{A}_0, \mathscr{A}_1, \dots, \mathscr{A}_{k-1}$, their juxtaposition, denoted $[\mathscr{A}_0 \, \mathscr{A}_1 \, \dots \, \mathscr{A}_{k-1}]$, is the set of permutations that can be expressed as concatenations $\alpha_0\alpha_1\dots\alpha_{k-1}$ where $\st(\alpha_t) \in \mathscr{A}_t$ for all $0\le t<k$. For example, $[\Av(21) \,  \Av(12)]$ is the set of {\em unimodal} permutations, i.e., those
$\pi\in\S_n$ satisfying $\pi_1<\pi_2<\dots<\pi_j>\pi_{j+1}>\dots>\pi_n$ for some $1\le j\le n$. The juxtaposition of permutation classes is again a class, and
as such, it can be characterized in terms of pattern avoidance. For example, $[\Av(21) \, \Av(12)] = \Av(213, 312)$.
Atkinson~\cite{Atk} showed that if  $\mathscr{A}_t$ can be characterized by avoidance of a finite set of patterns for each $t$,
then the same is true for $[\mathscr{A}_0 \, \mathscr{A}_1 \, \dots \, \mathscr{A}_{k-1}]$.

Let $\sigma=\sigma_0\sigma_1\dots\sigma_{k-1} \in \{+,-\}^k$ as before.
In order to characterize the periodic patterns of $\Sigma_\sigma$, we define a class $\S^\sigma$ of permutations, called the {\em $\sigma$-class},
which also appeared in~\cite{ARM,AAR}.
Let $\S^\sigma$ be the juxtaposition $[\mathscr{A}_0\, \, \mathscr{A}_1 \, \, \dots \, \, \mathscr{A}_{k-1}]$ where, for $0\le t<k$,
$$\mathscr{A}_t = \begin{cases} \Av(21) & \mbox{if }\sigma_t = +, \\ \Av(12) & \mbox{if }\sigma_t = -. \end{cases}$$
For example, $\S^{+-}$ is the class of unimodal permutations, and $\S^{++}$ is the class of {\em Grassmannian} permutations, i.e., those with at most one descent.
Let $\S_n^\sigma=\S^\sigma\cap\S_n$. Some examples of elements in $\S^{+--}= [\Av(21) \, \, \Av(12) \, \, \Av(12)]$ are the permutations $3\,5\,8\,9\,11\,7\,6\,1\,12\,10\,4\,2$ and $2\,5\,9\,10\,11\,8\,4\,3\,1\,12\,7\,6$, drawn in Figure~\ref{fig:perm}.
Note that the empty permutation belongs to $\Av(21)$ and to $\Av(12)$, so one trivially has $\S^{+--}\subset\S^{+-+-}$, for example.

If $\tau\in\S_n^\sigma$, a {\em $\sigma$-segmentation} of $\tau$ is a sequence $0=e_0\le e_1\le \dots \le e_k=n$ satisfying that
each segment $\tau_{e_{t}+1}\dots \tau_{e_{t+1}}$ is increasing if $t \in T^+_\sigma$ and decreasing if $t\in T^-_\sigma$. By definition, permutations in $\S^\sigma$ are precisely those that admit a $\sigma$-segmentation.

\begin{figure}[htb]
\centering
\begin{tikzpicture}[scale = .4]
\draw[help lines] (0,0) grid (12,12);
\draw [dotted] (0,0) -- (12,12);
\draw[red] (.5,.5) --(.5,2.5);
\draw[red] (.5,2.5) --(2.5,2.5);
\draw[red] (2.5,2.5) --(2.5,7.5);
\draw[red] (2.5,7.5) --(7.5,7.5);
\draw[red] (7.5,7.5) --(7.5,.5);
\draw[red] (7.5,.5) --(.5,.5);
\draw[red] (1.5,1.5) --(1.5,4.5);
\draw[red] (1.5,4.5) --(4.5,4.5);
\draw[red] (4.5,4.5) --(4.5,10.5);
\draw[red] (4.5,10.5) --(10.5,10.5);
\draw[red] (10.5,10.5) --(10.5,3.5);
\draw[red] (10.5,3.5) --(3.5,3.5);
\draw[red] (3.5,3.5) --(3.5,8.5);
\draw[red] (3.5,8.5) --(8.5,8.5);
\draw[red] (8.5,8.5) --(8.5,11.5);
\draw[red] (8.5,11.5) --(11.5,11.5);
\draw[red] (11.5,11.5) --(11.5,1.5);
\draw[red] (11.5,1.5) --(1.5,1.5);
\draw[red] (5.5,5.5) --(5.5,6.5);
\draw[red] (5.5,6.5) --(6.5,6.5);
\draw[red] (6.5,6.5) --(6.5,5.5);
\draw[red] (6.5,5.5) --(5.5,5.5);
\draw [thick] (0,2) -- (1,3);
\draw [thick] (0,3) -- (1,2);
\draw [thick] (1,4) -- (2,5);
\draw [thick] (1,5) -- (2,4);
\draw [thick] (2,7) -- (3,8);
\draw [thick] (2,8) -- (3,7);
\draw [thick] (3,8) -- (4,9);
\draw [thick] (3,9) -- (4,8);
\draw [thick] (4,10) -- (5,11);
\draw [thick] (4,11) -- (5,10);
\draw [thick] (5,6) -- (6,7);
\draw [thick] (5,7) -- (6,6);
\draw [thick] (6,5) -- (7,6);
\draw [thick] (6,6) -- (7,5);
\draw [thick] (7,0) -- (8,1);
\draw [thick] (7,1) -- (8,0);
\draw [thick] (8,11) -- (9,12);
\draw [thick] (8,12) -- (9,11);
\draw [thick] (9,9) -- (10,10);
\draw [thick] (9,10) -- (10,9);
\draw [thick] (10,3) -- (11,4);
\draw [thick] (10,4) -- (11,3);
\draw [thick] (11,1) -- (12,2);
\draw [thick] (11,2) -- (12,1);
\end{tikzpicture}
\hspace{.1\linewidth}
\begin{tikzpicture}[scale = .4]
\draw[help lines] (0,0) grid (12,12);
\draw [dotted] (0,0) -- (12,12);
\draw[red] (.5,.5) --(.5,1.5);
\draw[red] (.5,1.5) --(1.5,1.5);
\draw[red] (1.5,1.5) --(1.5,4.5);
\draw[red] (1.5,4.5) --(4.5,4.5);
\draw[red] (4.5,4.5) --(4.5,10.5);
\draw[red] (4.5,10.5) --(10.5,10.5);
\draw[red] (10.5,10.5) --(10.5,6.5);
\draw[red] (10.5,6.5) --(6.5,6.5);
\draw[red] (6.5,6.5) --(6.5,3.5);
\draw[red] (6.5,3.5) --(3.5,3.5);
\draw[red] (3.5,3.5) --(3.5,9.5);
\draw[red] (3.5,9.5) --(9.5,9.5);
\draw[red] (9.5,9.5) --(9.5,11.5);
\draw[red] (9.5,11.5) --(11.5,11.5);
\draw[red] (11.5,11.5) --(11.5,5.5);
\draw[red] (11.5,5.5) --(5.5,5.5);
\draw[red] (5.5,5.5) --(5.5,7.5);
\draw[red] (5.5,7.5) --(7.5,7.5);
\draw[red] (7.5,7.5) --(7.5,2.5);
\draw[red] (7.5,2.5) --(2.5,2.5);
\draw[red] (2.5,2.5) --(2.5,8.5);
\draw[red] (2.5,8.5) --(8.5,8.5);
\draw[red] (8.5,8.5) --(8.5,.5);
\draw[red] (8.5,.5) --(.5,.5);
\draw [thick] (0,1) -- (1,2);
\draw [thick] (0,2) -- (1,1);
\draw [thick] (1,4) -- (2,5);
\draw [thick] (1,5) -- (2,4);
\draw [thick] (2,8) -- (3,9);
\draw [thick] (2,9) -- (3,8);
\draw [thick] (3,9) -- (4,10);
\draw [thick] (3,10) -- (4,9);
\draw [thick] (4,10) -- (5,11);
\draw [thick] (4,11) -- (5,10);
\draw [thick] (5,7) -- (6,8);
\draw [thick] (5,8) -- (6,7);
\draw [thick] (6,3) -- (7,4);
\draw [thick] (6,4) -- (7,3);
\draw [thick] (7,2) -- (8,3);
\draw [thick] (7,3) -- (8,2);
\draw [thick] (8,0) -- (9,1);
\draw [thick] (8,1) -- (9,0);
\draw [thick] (9,11) -- (10,12);
\draw [thick] (9,12) -- (10,11);
\draw [thick] (10,6) -- (11,7);
\draw [thick] (10,7) -- (11,6);
\draw [thick] (11,5) -- (12,6);
\draw [thick] (11,6) -- (12,5);
\end{tikzpicture}
\caption{Two permutations in $\S^\sigma$, where $\sigma=+--$, and their cycle structure. The permutation on the left has $\sigma$-segmentations $0\le 4\le 8\le 12$ and $0\le 5\le 8\le 12$.}
\label{fig:perm}
\end{figure}
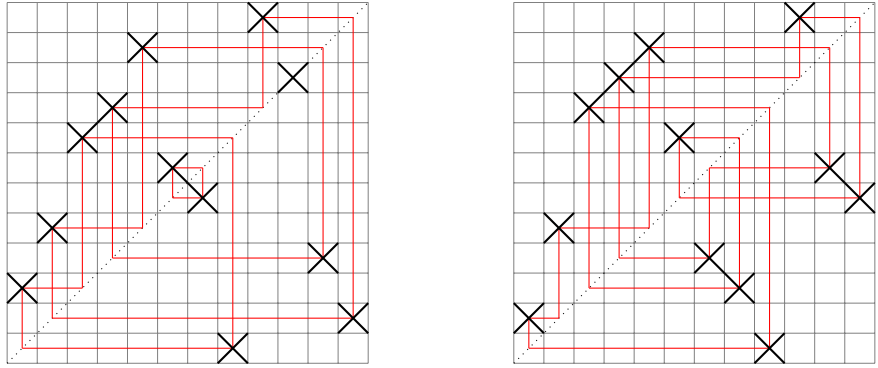

We denote by $\C_n$ (respectively, $\C^\sigma$, $\C_n^\sigma$) the set of cyclic permutations in $\S_n$ (respectively, $\S^\sigma$, $\S_n^\sigma$).
In Figure~\ref{fig:perm}, the permutation on the right is in $\C^\sigma$ while the permutation on the left is not.
It will be useful to define the map
$$\begin{array}{cccc}  \S_n & \to& \C_n\\
\pi &\mapsto &\hp,
\end{array}$$
where if $\pi = \pi_1\pi_2\dots \pi_n$ in one-line notation, then $\hp=(\pi_1,\pi_2,\dots,\pi_n)$ in cycle notation, that is, $\hp$ is the cyclic permutation that
sends $\pi_1$ to $\pi_2$, $\pi_2$ to $\pi_3$, and so on. Writing $\hp=\hp_1\hp_2\dots\hp_n$ in one-line notation, we have that $\hp_{\pi_i}=\pi_{i+1}$ for $1\le i\le n$, with the convention that $\pi_{n+1}:=\pi_1$.
For example, if $\pi = 17234856$, then $\pihat = 73486125$.

The map $\pi\mapsto\hp$ also plays an important role in~\cite{Elishifts}. Note that all the elements in the set $[\pi]$ of cyclic rotations of $\pi$ are mapped to the same element $\hp\in\C_n$.
Thus, letting $\SS_n=\{[\pi]:\pi\in \S_n\}$, this map induces a bijection $\theta$ between $\SS_n$ and $\C_n$, defined by $\theta([\pi])=\hp$.

\section{Description of periodic patterns of the signed shift}\label{sec:signed}

The main theorem of this paper is the following characterization of the periodic patterns of the signed shift $\Sigma_\sigma$, except in the case of the reverse shift. Throughout the paper we assume that $k\ge2$.

\begin{theorem}
\label{thm: description of periodic patterns}
Let $\sigma \in \{+,-\}^k$, $\sigma\neq-^k$. Then $\pi\in\PP(\Sigma_\sigma)$ if and only if $\hp\in \C^\sigma$.
\end{theorem}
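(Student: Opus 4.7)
The plan is to verify both directions separately by interpreting $\hp$ as the permutation that records how $\Sigma_\sigma$ acts on the orbit of a periodic point when that orbit is indexed by its $\prec_\sigma$-rank.

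For the forward direction, suppose $\pi\in\PP_n(\Sigma_\sigma)$ is realised by some $s\in\W_{k,n}$, and set $x_i=\Sigma_\sigma^{i-1}(s)$. Listing the orbit in sorted order, $y_1\prec_\sigma\dots\prec_\sigma y_n$, one has $y_j=x_{\pi^{-1}(j)}$, and combining $\Sigma_\sigma(x_i)=x_{i+1}$ with $\hp_{\pi_i}=\pi_{i+1}$ gives $\Sigma_\sigma(y_j)=y_{\hp(j)}$. Clause~(1) in the definition of $\prec_\sigma$ forces the first letters of $y_1,\dots,y_n$ to be weakly increasing, so the cut-offs $e_t=|\{j:\text{first letter of }y_j<t\}|$ satisfy $0=e_0\le e_1\le\dots\le e_k=n$ and split $[n]$ into blocks $B_t=\{e_t+1,\dots,e_{t+1}\}$, with $j\in B_t$ iff $y_j$ begins with $t$. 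Within a single block $B_t$, clauses~(2) and~(3) translate $y_j\prec_\sigma y_{j+1}$ into $y_{\hp(j)}\prec_\sigma y_{\hp(j+1)}$ if $t\in T^+_\sigma$ and into $y_{\hp(j+1)}\prec_\sigma y_{\hp(j)}$ if $t\in T^-_\sigma$, which is precisely the statement that $\hp|_{B_t}$ is monotone in the direction prescribed by $\sigma_t$. Hence $(e_t)$ is a $\sigma$-segmentation and $\hp\in\S^\sigma_n$; and since $s$ has primitive period $n$, $\hp$ is an $n$-cycle, so $\hp\in\C^\sigma_n$.

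For the backward direction, start with $\tau=\hp\in\C^\sigma_n$, fix a $\sigma$-segmentation $0=e_0\le\dots\le e_k=n$, and define $b:[n]\to\{0,\dots,k-1\}$ by $b(j)=t$ iff $j\in B_t$. Let $u_i:=\tau^{i-1}(1)$; because $\tau$ is an $n$-cycle, $\{u_1,\dots,u_n\}=[n]$. Put
\[ s_i:=b(u_i),\qquad i=1,\dots,n, \]
extended periodically. The central claim is
\[ \Sigma_\sigma^{i-1}(s)\prec_\sigma\Sigma_\sigma^{j-1}(s)\iff u_i<u_j\qquad(i\ne j), \]
which I would prove by unfolding $\prec_\sigma$: if $u_i,u_j$ lie in distinct blocks, clause~(1) decides the inequality exactly according to $u_i<u_j$; if they share a block $B_t$, clauses~(2) or~(3) pass to comparing the next pair of shifts, and the $\sigma$-segmentation makes $u_i<u_j$ correspond to $u_{i+1}=\tau(u_i)<\tau(u_j)=u_{j+1}$ when $\sigma_t=+$ and to $u_{i+1}>u_{j+1}$ when $\sigma_t=-$, compensating the sign flip introduced by clause~(3). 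Iterating, the recursion terminates at the first position where the block sequences differ. Granted the claim, $\Sigma_\sigma^{i-1}(s)$ has rank $u_i$ in the orbit, so $\PPat(s,\Sigma_\sigma)=u_1u_2\dots u_n$, which lies in $[\pi]$ by the very definition of $\hp$; therefore $\pi\in\PP_n(\Sigma_\sigma)$.

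The delicate issue, and where $\sigma\ne -^k$ must enter, is to guarantee that, for a suitable choice of $\sigma$-segmentation, $s$ is primitive of period $n$ (so that $s\in\W_{k,n}$ and the recursion above actually terminates). If $s$ had period $d\mid n$ with $d<n$, then $b(u_i)=b(u_{i+d})$ for every $i$ would force $\tau^d$ to send every block $B_t$ into itself; since $\tau^d$ consists of $d$ cycles of length $n/d\ge2$, each such cycle would sit inside a single block, and combined with $\tau|_{B_t}$ being strictly monotone this should be impossible whenever at least one $\sigma_t=+$. The small example $\tau=3421\in\C^{+-}_4$ shows that not every valid $\sigma$-segmentation produces a primitive $s$ (the segmentation $e_1=2$ yields $(01)^\infty$, while $e_1=1$ correctly yields $(0111)^\infty$), so the proof will have to identify a preferred segmentation — possibly the finest one, or one obtained through a refinement procedure that breaks any bad invariant block. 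Nailing down this choice of segmentation and verifying primitivity is, I expect, the main obstacle; once handled, the rest is a routine unwinding of $\prec_\sigma$.
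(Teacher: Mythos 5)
Your forward direction and the ``central claim'' of your backward direction are essentially the paper's own argument (Lemma~\ref{lem: ppat} for the forward direction; Lemmas~\ref{lem: Csigma} and~\ref{lem:sisj} for the claim that the word $s_i=b(u_i)$ induced by a $\sigma$-segmentation realizes $\pi$ \emph{provided it is primitive}). The genuine gap is exactly the step you flag at the end and leave open: producing a segmentation whose induced word is primitive. This is not a routine verification to be ``nailed down'' later --- it is the bulk of the proof (Lemma~\ref{lem: s1sn}) and the only place the hypothesis $\sigma\neq-^k$ is used. Moreover, the heuristic you offer for why non-primitivity should be impossible is false: you assert that an invariant block structure ``should be impossible whenever at least one $\sigma_t=+$,'' but your own example refutes this ($\tau=3421\in\C_4^{+-}$ with cut $e_1=2$ induces $(01)^\infty$, non-primitive, even though $\sigma_0=+$). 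So the proposal as written neither proves primitivity nor correctly identifies the mechanism that controls it.

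The correct analysis, which your sketch does not reach, is a parity argument. Write the induced word as $q^m$ with $q$ primitive of length $r=n/m$, and let $g$ be the number of letters of $q$ lying in $T^-_\sigma$. Applying the block-monotonicity lemma $r$ times to the pair $(u_i,u_{i+r})$ flips the inequality exactly $g$ times, and chaining these comparisons around the cycle yields a contradiction in every case \emph{except} $m=2$ with $g$ odd (the cases $g$ even, $g$ odd with $m$ even $\ge4$, and $g$ odd with $m$ odd each need a separate chaining argument). The surviving case $m=2$, $g$ odd genuinely occurs --- it is precisely your $3421$ example --- and is repaired not by taking a ``finest'' segmentation but by locating an index $\ell$ with $\sigma_{\ell-1}\sigma_\ell\in\{+-,-+\}$ and moving the single cut $e_\ell$ by one; this changes the parity of the number of occurrences of the letter $\ell$ and so destroys the square $q^2$. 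Such an $\ell$ exists unless $\sigma$ is constant: for $\sigma=+^k$ the bad case is vacuous ($g=0$), while for $\sigma=-^k$ it forces $n=2g\equiv2\bmod4$ and cannot be repaired --- which is why the theorem excludes $-^k$ and only Proposition~\ref{prop: description reverse} holds there. Without this case analysis and repair step your backward direction does not go through.
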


An equivalent restatement of this theorem, whose proof will require a few lemmas, is that $\theta$ restricts to a bijection between $\PPP_n(\Sigma_\sigma)$ and $\C_n^\sigma$.
 The following lemma describes some conditions satisfied by the periodic patterns of $\Sigma_\sigma$, proving the forward direction of Theorem~\ref{thm: description of periodic patterns}.
For convenience, we will use the notation $\PPat_{\Sigma_\sigma}(s)$ or $\PPat_\sigma(s)$
instead of $\PPat(s,\Sigma_\sigma)$. Thus, for $\sigma \in \{+,-\}^k$, $\PPat_\sigma$ is a surjective map from $\W_{k,n}$ to $\PP(\Sigma_\sigma)$. If $\pi\in\S_n$, we will say that a word
$s_1s_2\dots s_n$ is {\em $\pi$-monotone} if $s_a\le s_b$ whenever $\pi_a<\pi_b$.

\begin{lemma} \label{lem: ppat}
Let $\sigma\in\{+,-\}^k$ be arbitrary, let $\pi\in\PP_n(\Sigma_\sigma)$, and let $s = (s_1\dots s_n)^\infty\in\W_{k,n}$ be such that $\pi=\PPat_\sigma(s)$. For $1\le t\le k$, let
$d_t=|\{i\in[n]:s_i< t\}|$, and let $d_0=0$. The following statements hold:
\begin{enumerate}
\item The word $s_1s_2\dots s_n$ is {\em $\pi$-monotone}, that is, $s_i=t$ if and only if $d_{t} < \pi_i\le d_{t+1}$.
\item If $d_{t} < \pi_i<\pi_j\leq d_{t+1}$, then $\pi_{i+1}<\pi_{j+1}$ if $t \in T^+_\sigma$, and $\pi_{i+1}>\pi_{j+1}$ if $t \in T^-_\sigma$,
where we let $\pi_{n+1}:=\pi_1$.
\item The sequence $d_0,d_1,\dots,d_k$ is a $\sigma$-segmentation of $\hp$. In particular, $\hp \in \C^\sigma$.
\end{enumerate}
\end{lemma}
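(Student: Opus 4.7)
The plan is to exploit the fact that, by definition of the pattern, $\pi_i$ is the rank of the shifted word $\Sigma_\sigma^{i-1}(s) = s_i s_{i+1} s_{i+2}\dots$ among the $n$ shifts of $s$ under the order $\prec_\sigma$. All three parts then follow by comparing pairs of shifts using the appropriate clause in the definition of $\prec_\sigma$, and by tracking how the letter values $s_i$ interact with the ranks $\pi_i$. Since $s$ has period $n$, I keep in mind throughout that indices are to be read cyclically, so that $\Sigma_\sigma^n(s)=s$ and the convention $\pi_{n+1}:=\pi_1$ is consistent with ``rank = $\pi$-value.''

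For Part~1, I would first observe that whenever $s_a < s_b$, clause~(1) of $\prec_\sigma$ applies to $\Sigma_\sigma^{a-1}(s)$ and $\Sigma_\sigma^{b-1}(s)$, forcing $\pi_a<\pi_b$. Contrapositively, $\pi_a<\pi_b$ forces $s_a\le s_b$, which is exactly $\pi$-monotonicity. To pin down the block structure, I would count: there are exactly $d_{t+1}-d_t$ positions with $s_i=t$, and by $\pi$-monotonicity their $\pi$-values form a set of consecutive integers that sit above all $\pi$-values at letters smaller than $t$ and below all $\pi$-values at letters larger than $t$. The only possibility is $\{d_t+1,d_t+2,\dots,d_{t+1}\}$, giving the claimed ``iff.''

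For Part~2, assume $d_t<\pi_i<\pi_j\le d_{t+1}$. Part~1 gives $s_i=s_j=t$, so the first letters of $\Sigma_\sigma^{i-1}(s)$ and $\Sigma_\sigma^{j-1}(s)$ agree, and only clause~(2) or~(3) of $\prec_\sigma$ can decide the comparison. The remaining tails are $\Sigma_\sigma^i(s)$ and $\Sigma_\sigma^j(s)$, whose $\prec_\sigma$-ranks among the $n$ shifts of $s$ are $\pi_{i+1}$ and $\pi_{j+1}$ respectively. If $t\in T^+_\sigma$, clause~(2) propagates the inequality $\pi_i<\pi_j$ to $\pi_{i+1}<\pi_{j+1}$; if $t\in T^-_\sigma$, clause~(3) reverses it to $\pi_{i+1}>\pi_{j+1}$.

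For Part~3, I would translate the statement about $\hp$: the segment $\hp_{d_t+1}\hp_{d_t+2}\dots\hp_{d_{t+1}}$ is, by the definition of $\hp$, obtained by listing the values $\pi_{i+1}$ as $i$ ranges over the positions with $\pi_i\in\{d_t+1,\dots,d_{t+1}\}$, in increasing order of $\pi_i$. By Part~1 these are precisely the positions where $s_i=t$, and Part~2 says this list is increasing when $t\in T^+_\sigma$ and decreasing when $t\in T^-_\sigma$. Since $d_0=0$, $d_k=n$, and the $d_t$ are weakly increasing, this is a valid $\sigma$-segmentation of $\hp$, and $\hp\in\C^\sigma$ because $\hp$ is cyclic by construction. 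The main obstacle I expect is purely bookkeeping: making sure the counting argument in Part~1 is phrased to give the ``iff'' (not only monotonicity), and handling the cyclic wraparound $\pi_{n+1}=\pi_1$ cleanly when $j=n$ in Part~2.
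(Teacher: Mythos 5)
Your proposal is correct and follows essentially the same route as the paper's proof: monotonicity of the letter-to-rank correspondence via clause (1) of $\prec_\sigma$ for Part 1, propagation or reversal of the tail comparison via clauses (2) and (3) for Part 2, and reading off the segments $\hp_{d_t+1}\dots\hp_{d_{t+1}}$ as the images $\pi_{i+1}$ of the positions with $s_i=t$ for Part 3. The only difference is that you spell out the block-counting step of Part 1 more explicitly than the paper does, which is harmless.
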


\begin{proof}
Since $\PPat_\sigma(s)=\pi$, it is clear for all $a,b\in[n]$, $\pi_a<\pi_b$ implies $s_a\le s_b$, from where part 1 follows.
To prove part 2, suppose that $d_{t} < \pi_i<\pi_j\leq d_{t+1}$, and so $s_{[i, \infty)} \prec_\sigma s_{[j,\infty)}$. By part 1, we have $s_i = s_j=t$.
If $t \in T^+_\sigma$, then $s_{[i+1, \infty)} \prec_\sigma s_{[j+1,\infty)}$, and so $\pi_{i+1}<\pi_{j+1}$.
Similarly, if $t \in T^-_\sigma$, then $s_{[j+1, \infty)} \prec_\sigma s_{[i+1,\infty)}$, and so $\pi_{i+1}>\pi_{j+1}$.

Now let $0\le t<k$, and suppose that the indices $j$ such that $s_j=t$ are $j_1,\dots, j_m$, ordered in such a way that $\pi_{j_1}< \pi_{j_2}<\dots< \pi_{j_m}$,
where $m=d_{t+1}-d_{t}$. Then part 1 implies that $\pi_{j_\ell}=d_{t-1}+\ell$ for $1\le\ell\le m$,
and part 2 implies that $\pi_{j_1+1}< \pi_{j_2+1}<\dots< \pi_{j_m+1}$ if $t\in T^+_\sigma$,
and $\pi_{j_1+1}> \pi_{j_2+1}>\dots> \pi_{j_m+1}$ if $t\in T^-_\sigma$.
Using that $\pi_{j_\ell+1}=\hp_{\pi_{j_\ell}}=\hp_{d_{t}+\ell}$, this is equivalent to
$\hp_{d_{t}+1}< \hp_{d_{t}+2}<\dots<\hp_{d_{t}+m}$ if $t\in T^+_\sigma$,
and $\hp_{d_{t}+1}>\hp_{d_{t}+2}>\dots>\hp_{d_{t}+m}$ if $t\in T^-_\sigma$. Note that $d_{t}+m = d_{t+1}$, so this condition states that $d_0,d_1,\dots,d_k$ is a $\sigma$-segmentation of $\hp$.
Since $\hp$ is a cyclic permutation, this proves that $\hp\in\C^\sigma$.
\end{proof}

The next three lemmas will be used in the proof of the backward direction of Theorem~\ref{thm: description of periodic patterns}.

\begin{lemma}\label{lem: Csigma}
Let $\sigma\in\{+,-\}^k$ be arbitrary, let $\pi\in\S_n$, and suppose that
$0=e_0\le e_1\le \dots \le e_k=n$ is a $\sigma$-segmentation of $\hp$.
Suppose that $e_{t} < \pi_i<\pi_j\leq e_{t+1}$ for some $1\leq i,j\leq n$. Then $\pi_{i+1}<\pi_{j+1}$ if $t \in T^+_\sigma$, and $\pi_{i+1}>\pi_{j+1}$ if $t \in T^-_\sigma$,
where we let $\pi_{n+1}:=\pi_1$.
\end{lemma}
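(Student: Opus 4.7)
The plan is to unwind the definitions and reduce the statement to the monotonicity of $\hp$ on the segment determined by the $\sigma$-segmentation. The key identity is the one recorded just before the statement of the lemma: for every $1\le i\le n$ we have $\hp_{\pi_i}=\pi_{i+1}$ (with $\pi_{n+1}:=\pi_1$). So the conclusion about relative order of $\pi_{i+1}$ and $\pi_{j+1}$ is really a statement about the relative order of $\hp_{\pi_i}$ and $\hp_{\pi_j}$.

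First I would translate the hypothesis into the language of $\hp$: the assumption $e_t<\pi_i<\pi_j\le e_{t+1}$ says that the two positions $\pi_i$ and $\pi_j$ (viewed as indices into the one-line notation of $\hp$) both lie in the $(t+1)$st segment $\{e_t+1,e_t+2,\dots,e_{t+1}\}$ of the $\sigma$-segmentation of $\hp$. By the very definition of a $\sigma$-segmentation, the restriction of $\hp$ (read as a word) to this segment is increasing when $t\in T^+_\sigma$ and decreasing when $t\in T^-_\sigma$.

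Therefore, from $\pi_i<\pi_j$ with both indices in the segment $(e_t,e_{t+1}]$, we conclude
\[
\hp_{\pi_i}<\hp_{\pi_j}\quad\text{if }t\in T^+_\sigma,\qquad \hp_{\pi_i}>\hp_{\pi_j}\quad\text{if }t\in T^-_\sigma.
\]
Substituting $\hp_{\pi_i}=\pi_{i+1}$ and $\hp_{\pi_j}=\pi_{j+1}$ yields exactly the desired inequality in each of the two cases, completing the proof.

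There is essentially no obstacle here: the lemma is a direct translation of the defining monotonicity property of a $\sigma$-segmentation via the identity $\hp_{\pi_i}=\pi_{i+1}$. The only care needed is the indexing convention $\pi_{n+1}:=\pi_1$, which matches the way $\hp$ is built as a cycle in the map $\pi\mapsto\hp$, so that the identity $\hp_{\pi_i}=\pi_{i+1}$ holds uniformly for all $1\le i\le n$. One should note that this lemma is the converse-type companion to part 2 of Lemma~\ref{lem: ppat}: there the $\sigma$-segmentation was extracted from a periodic word realizing $\pi$, whereas here it is given abstractly, so the proof does not require any signed-shift dynamics but only the combinatorial structure of $\hp$.
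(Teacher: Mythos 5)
Your proof is correct and is essentially identical to the paper's own argument: both reduce the claim to the identity $\hp_{\pi_i}=\pi_{i+1}$ together with the monotonicity of the segment $\hp_{e_t+1}\dots\hp_{e_{t+1}}$ guaranteed by the $\sigma$-segmentation. No issues.
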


\begin{proof}
Since $e_{t} < \pi_i<\pi_j\leq e_{t+1}$, both $\hp_{\pi_i}$ and $\hp_{\pi_j}$ lie in the segment $\hp_{e_{t}+1}\dots \hp_{e_{t+1}}$. If $t \in T^+_\sigma$, this segment is increasing,
so $\pi_{i+1}=\hp_{\pi_i}<\hp_{\pi_j}=\pi_{j+1}$. The argument is analogous if $t \in T^-_\sigma$.
\end{proof}

It is important to note that for any $\pi\in\S_n$ and any sequence $0=e_0\le e_1\le \dots \le e_k=n$, there is a unique $\pi$-monotone word $s_1s_2\dots s_n$ such that $|\{i\in[n]:s_i< t\}|=e_t$ for every $t$. This word is defined by letting $s_i = t$ whenever $e_{t}< \pi_i\leq e_{t+1}$.
We say that $s_1s_2\dots s_n$ is the {\em $\pi$-monotone word induced by} $e_0,e_1,\dots,e_k$.

\begin{lemma}\label{lem:sisj}
Let $\sigma \in \{+,-\}^k$ be arbitrary, and let $\pi\in\S_n$ be such that $\hp \in \C^\sigma$.
Take any $\sigma$-segmentation of $\hp$, and let $s_1\dots s_n$ be the $\pi$-monotone word induced by it. Let $s=(s_1s_2\dots s_n)^\infty$.
If $1\le i,j\le n$ are such that $\pi_i<\pi_j$, then either $s_{[i,\infty)}=s_{[j,\infty)}$ or $s_{[i,\infty)} \prec_\sigma s_{[j,\infty)}$.
In particular, if $s_1\dots s_n$ is primitive, then $\PPat_\sigma(s)=\pi$.
\end{lemma}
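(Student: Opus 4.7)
The plan is to compare the two infinite words $s_{[i,\infty)}$ and $s_{[j,\infty)}$ letter by letter, advancing both indices simultaneously and using Lemma~\ref{lem: Csigma} at each step to transport the known order between $\pi_i$ and $\pi_j$ to the next pair of indices under $\hp$. As long as the two words agree through position $\ell$, both iterated $\pi$-values land in a common segment $(e_t,e_{t+1}]$ of the chosen $\sigma$-segmentation, and the sign $\sigma_t$ controls whether the order is preserved or reversed at the next step. When the words first disagree, the accumulated parity together with $\pi$-monotonicity should force exactly the letter inequality required by the equivalent description of $\prec_\sigma$ stated after its definition.

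More precisely, I would set $i_\ell=i+\ell$ and $j_\ell=j+\ell$ (indices cycled modulo $n$, with the convention $\pi_{n+1}:=\pi_1$) and let $c_\ell=|\{0\le m<\ell:s_{i_m}\in T^-_\sigma\}|$. By induction on $\ell$, I would establish the invariant: provided $s_{i_m}=s_{j_m}$ for every $m<\ell$, one has $\pi_{i_\ell}<\pi_{j_\ell}$ when $c_\ell$ is even and $\pi_{i_\ell}>\pi_{j_\ell}$ when $c_\ell$ is odd. The base case $\ell=0$ is the hypothesis $\pi_i<\pi_j$; for the inductive step, letting $t=s_{i_{\ell-1}}=s_{j_{\ell-1}}$ places both $\pi_{i_{\ell-1}}$ and $\pi_{j_{\ell-1}}$ in the segment $(e_t,e_{t+1}]$, so Lemma~\ref{lem: Csigma} yields the correct ordering of $\pi_{i_\ell}$ and $\pi_{j_\ell}$, flipping the parity of $c$ precisely when $t\in T^-_\sigma$.

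Two cases then arise. If $s_{i_\ell}=s_{j_\ell}$ holds for every $\ell\ge 0$, then $s_{[i,\infty)}=s_{[j,\infty)}$ directly. Otherwise let $\ell$ be the minimal index with $s_{i_\ell}\ne s_{j_\ell}$; the invariant determines the order of $\pi_{i_\ell}$ and $\pi_{j_\ell}$, and $\pi$-monotonicity upgrades this to the strict letter inequality $s_{i_\ell}<s_{j_\ell}$ when $c_\ell$ is even and $s_{i_\ell}>s_{j_\ell}$ when $c_\ell$ is odd (strictness coming from minimality of $\ell$). This is exactly the condition for $s_{[i,\infty)}\prec_\sigma s_{[j,\infty)}$ under the equivalent definition of $\prec_\sigma$. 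For the ``in particular'' clause, primitivity of $s_1\dots s_n$ makes the $n$ shifts of $s$ pairwise distinct, so the equality alternative is ruled out whenever $i\ne j$, and the strict $\prec_\sigma$-comparisons between orbit points recover $\pi$ as $\PPat_\sigma(s)$.

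The main obstacle I foresee is simply the bookkeeping: one must verify at each inductive step that Lemma~\ref{lem: Csigma}'s hypothesis (both $\pi$-values lying in the same segment) is met. This reduces to the observation that two indices of the induced $\pi$-monotone word carry the same letter exactly when their $\pi$-values lie in the same segment $(e_t,e_{t+1}]$, which is built into the definition of the induced word. Once this is cleanly stated, the rest is just unwinding the two equivalent formulations of $\prec_\sigma$.
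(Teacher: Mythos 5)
Your proposal is correct and follows essentially the same route as the paper's proof: iterate Lemma~\ref{lem: Csigma} up to the first position where the two shifted words disagree, track the parity of the number of letters in $T^-_\sigma$ encountered, and use $\pi$-monotonicity together with the parity-based reformulation of $\prec_\sigma$ to conclude; the primitivity argument for the final claim is also the same. The only difference is presentational — you package the repeated application of Lemma~\ref{lem: Csigma} as an explicit induction with a stated invariant, which the paper leaves implicit.
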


\begin{proof}
Suppose that $\pi_i<\pi_j$. If $s_{[i,\infty)}\neq s_{[j,\infty)}$, let $a\ge0$ be the smallest such that $s_{i+a}\neq  s_{j+a}$,
and let $h=|\{0\le \ell \le a-1 : s_\ell \in T^-_\sigma\}|$.
 If $h$ is even, then Lemma \ref{lem: Csigma} applied $a$ times shows that $\pi_{i+a}<\pi_{j+a}$. Since $s_{i+a}\neq s_{j+a}$, we must then have $s_{i+a} < s_{j+a}$, because of $\pi$-monotonicity.
Thus, $s_{[i,\infty)}\prec_\sigma s_{[j,\infty)}$ by definition of $\prec_\sigma$, since the word $s_is_{i+1}\dots s_{i+a-1}= s_js_{j+1}\dots s_{j+a-1}$ has an even number of letters in $T^-_\sigma$.
 Similarly, if $h$ is odd, then Lemma \ref{lem: Csigma} shows that  $\pi_{i+a}>\pi_{j+a}$. Since $s_{i+a}\neq s_{j+a}$, we must have $s_{i+a} > s_{j+a}$, and thus $s_{[i,\infty)}\prec_\sigma s_{[j,\infty)}$
by definition of $\prec_\sigma$.

If $s_1\dots s_n$ is primitive, the case $s_{[i,\infty)}=s_{[j,\infty)}$ can never occur when $i\neq j$, and so $\pi_i<\pi_j$ if and only if $s_{[i,\infty)} \prec_\sigma s_{[j,\infty)}$.
It follows that $\PPat_\sigma(s)=\pi$.
\end{proof}

\begin{lemma}\label{lem: s1sn}
Let $\sigma \in \{+,-\}^k$ be arbitrary. If $\sigma=-^k$, additionally assume that $n \not=2\bmod 4$.
Let $\pi\in\S_n$ be such that $\hp \in \C^\sigma$. Then there exists a $\sigma$-segmentation of $\hp$ such that the $\pi$-monotone word $s_1\dots s_n$ induced by it is primitive, and the word $s=(s_1\dots s_n)^\infty$ satisfies $\PPat_\sigma(s)=\pi$.
 Furthermore, if $\sigma=+^k$ or $\sigma=-^k$, then {\em every} $\sigma$-segmentation of $\hp$ has this property.
\end{lemma}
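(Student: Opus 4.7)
The plan is to apply Lemma~\ref{lem:sisj}: once we exhibit a $\sigma$-segmentation of $\hp$ whose induced $\pi$-monotone word is primitive, the equality $\PPat_\sigma((s_1\dots s_n)^\infty)=\pi$ is automatic, and the statement reduces entirely to a primitivity question.

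The technical crux is the following claim: if a $\sigma$-segmentation induces a word $s_1\dots s_n$ with minimal period $d\mid n$ satisfying $1\le d<n$, then $d=n/2$ and $c:=|\{i\in[d]:s_i\in T^-_\sigma\}|$ is odd. To prove it, fix $i$ and set $a_j=\pi_{i+jd}$ for $0\le j<m:=n/d$. By periodicity of $s$, all $a_j$ lie in a common value-bin $t^*=s_i$, and $\hp^d$ acts on $\{a_0,\dots,a_{m-1}\}$ as the $m$-cycle $a_j\mapsto a_{j+1}$. Iterating Lemma~\ref{lem: Csigma} $d$ times---at each step using the periodicity of $s$ to keep the two compared positions in a common value-bin---shows that $\hp^d$ is order-preserving on the orbit when $c$ is even and order-reversing when $c$ is odd. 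An $m$-cycle with $m\ge2$ on a finite totally ordered set cannot be order-preserving (only the identity is), and the order-reversing permutation of an $m$-element set is the rank-reversal involution, which decomposes as $\lfloor m/2\rfloor$ transpositions and hence is a single $m$-cycle only for $m\le2$. Therefore $m=2$, forcing $d=n/2$ and $c$ odd.

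From the claim, the ``furthermore'' assertion is immediate by parity: for $\sigma=+^k$, $c=0$ is always even; for $\sigma=-^k$, $c=d=n/2$ is odd precisely when $n\equiv2\pmod4$, excluded by hypothesis. Thus every $\sigma$-segmentation yields a primitive word in these two cases. For the existence claim in the remaining (non-constant $\sigma$) cases, start with any $\sigma$-segmentation $E$; if the induced $s$ is primitive we are done, otherwise $s=uu$ by the claim. Choose an index $t\in\{1,\dots,k-1\}$ with $\sigma_{t-1}\ne\sigma_t$ (which exists since $\sigma$ is non-constant); the pair $(\hp_{e_t},\hp_{e_t+1})$ is either an ascent or a descent, matching exactly one of the two signs $\sigma_{t-1},\sigma_t$, which determines a valid single-unit shift $e_t\mapsto e_t\pm1$ producing a new segmentation $E'$. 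The shift reassigns the bin of exactly one position of $s$, breaking the period-$n/2$ equality $s_i=s_{i+n/2}$ there; by the claim the new word $s'$ must then be primitive.

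The main obstacle is the careful execution of the iterated Lemma~\ref{lem: Csigma} argument in the claim, in particular verifying that at each of the $d$ iterations the pair under comparison remains in a common value-bin (which rests on $s_{i+\ell}=s_{i+\ell+d}$) so that the sign-flip calculus produces the required order-preserving/order-reversing dichotomy for $\hp^d$. Once that dichotomy is in place, the bound $m\le2$ is a simple fact about cyclic permutations of finite totally ordered sets, and the parity arguments for the ``furthermore'' case and the boundary-shift argument for existence are elementary.
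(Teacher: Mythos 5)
Your proof is correct, and its overall architecture is the same as the paper's: reduce everything to a primitivity question via Lemma~\ref{lem:sisj}, show by iterating Lemma~\ref{lem: Csigma} that a non-primitive induced word must be a square $q^2$ with an odd number of letters of $q$ in $T^-_\sigma$, dispose of $\sigma=+^k$ and $\sigma=-^k$ by parity of that count, and repair a square word for non-constant $\sigma$ by moving one cut point $e_t$ at a sign change of $\sigma$. Where you genuinely depart from (and improve on) the paper is the crux step: the paper eliminates the cases ($g$ even), ($g$ odd, $m$ even $\ge 4$), and ($g$ odd, $m$ odd) by three separate hand-built chains of inequalities of the form $\pi_1<\pi_{1+r}<\cdots<\pi_1$, whereas you package the $d$-fold iteration of Lemma~\ref{lem: Csigma} as the single statement that $\hp^{\,d}$ restricted to the orbit $\{\pi_{i+jd}\}$ is an $m$-cycle that is order-preserving ($c$ even) or order-reversing ($c$ odd), and then invoke the elementary facts that an order-preserving bijection of a finite chain is the identity and that the rank-reversal involution is an $m$-cycle only for $m\le 2$. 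This collapses the paper's three-way case analysis into one structural observation and makes it transparent that exactly the case $m=2$, $c$ odd survives; the verification that each compared pair stays in a common bin is the same periodicity argument the paper uses implicitly. The only place where you merely match, rather than exceed, the paper's level of care is the boundary situation in the cut-point shift (if $e_t\in\{0,n\}$ the comparison of $\hp_{e_t}$ with $\hp_{e_t+1}$ is undefined and one must shift in the only available direction, which still breaks the square); neither you nor the paper spells this out, and it causes no real difficulty.
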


\begin{proof}
Since $\hp \in \C^\sigma$, it admits some $\sigma$-segmentation. Pick one, say $0=e_0\le e_1\le \dots \le e_k=n$, and let $s_1\dots s_n$ be the $\pi$-monotone word induced by it. In this proof we take the indices of $\pi$ mod $n$, that is, we define $\pi_{i+jn}=\pi_i$ for $i\in[n]$.

Suppose that $s_1\dots s_n$ is not primitive, so it can be written as $q^m$ for some $m\ge 2$ and some primitive word $q$ with $|q|=r=n/m$.
Then, $s_i = s_{i+r}$ for all $i$. Let $g = |\{i\in[r] :  s_i \in T^-_\sigma\}|$.
Fix $i$, and let $t=s_i = s_{i+r}$. Because of the way that $s_1\dots s_n$ is defined, we must have $e_t< \pi_i,\pi_{i+r}\leq e_{t+1}$, so we can apply Lemma~\ref{lem: Csigma} to this pair.

Suppose first that $g$ is even.
If $\pi_i<\pi_{i+r}$, then applying Lemma~\ref{lem: Csigma} $r$ times we get $\pi_{i+r}<\pi_{i+2r}$,
since the inequality involving $\pi_{i+\ell}$ and $\pi_{i+r+\ell}$ switches exactly $g$ times as $\ell$ increases from $0$ to $r$.  Starting with $i=1$ and applying this argument repeatedly, we see that if $\pi_1<\pi_{1+r}$, then
$\pi_1<\pi_{1+r}<\pi_{1+2r}<\dots <\pi_{1+(m-1)r}<\pi_{1+mr}=\pi_1$, which is a contradiction.
A symmetric argument shows that if $\pi_1>\pi_{1+r}$, then $\pi_1>\pi_{1+r}>\pi_{1+2r}>\dots >\pi_{1+(m-1)r}>\pi_{1+mr}=\pi_1$.

It remains to consider the case that $g$ is odd. If $m$ is even and $m\ge4$, then letting $q' = qq$ we have $s_1s_2\dots s_n = (q')^{\frac{m}{2}}$.
Letting $r'=|q'|=2r$ and $g'= |\{i\in[2r] :  s_i \in T^-_\sigma\}|=2g$, the same argument as above using $r'$ and $g'$ yields a contradiction.
If $m$ is odd, suppose without loss of generality that $\pi_1<\pi_{1+r}$.
Note that applying Lemma~\ref{lem: Csigma} $r$ times to the inequality $\pi_i<\pi_{i+r}$ (respectively $\pi_i>\pi_{i+r}$) yields $\pi_{i+r}>\pi_{i+2r}$ (respectively $\pi_{i+r}<\pi_{i+2r}$) in this case, since the inequality involving $\pi_{i+\ell}$ and $\pi_{i+r+\ell}$ switches an odd number of times.
Consider two cases:
\begin{itemize}
\item If $\pi_{1}<\pi_{1+2r}$, then Lemma~\ref{lem: Csigma} applied repeatedly in blocks of $2r$ times yields $\pi_1<\pi_{1+2r}<\pi_{1+4r}<\dots< \pi_{1+(m-1)r}$.
Applying now Lemma~\ref{lem: Csigma} $r$ times starting with $\pi_1<\pi_{1+(m-1)r}$ gives $\pi_{1+r}>\pi_{1+mr}=\pi_1$, which contradicts the assumption $\pi_1<\pi_{1+r}$.

\item If $\pi_{1}>\pi_{1+2r}$, applying Lemma~\ref{lem: Csigma} $r$ times we get $\pi_{1+r}<\pi_{1+3r}$, and by repeated application of the lemma in blocks of $2r$ times it follows that
$\pi_{1+r}<\pi_{1+3r}<\pi_{1+5r}<\dots< \pi_{1+(m-2)r}<\pi_{1+mr}=\pi_1$, contradicting again the assumption $\pi_1<\pi_{1+r}$.
\end{itemize}

The only case left is when $g$ is odd and $m = 2$, that is, when $s_1s_2\dots s_n = q^2$ and $q$ has an odd number of letters in $T^-_\sigma$.
Note that this situation does not happen when $\sigma = +^k$ (since in this case $g=0$) and, although it can happen when $\sigma=-^k$, in this case we would have that $T^-_\sigma=\{0,1,\dots,k-1\}$, and so $n = 2r = 2g= 2 \bmod 4$, which we are excluding in the statement of the theorem.

Thus, we can assume that there exists some $1\le\ell<k$ such that $\sigma_{\ell-1}\sigma_{\ell}$ is either $+-$ or $-+$.
We will show that there is a $\sigma$-segmentation $0=e'_0\le e'_1\le \dots \le e'_k=n$ of $\hp$ such that the $\pi$-monotone word $s'_1s'_2\dots s'_n$ induced by it is primitive.

Suppose that $\sigma_{\ell-1}\sigma_{\ell}=+-$ (the case $\sigma_{\ell-1}\sigma_{\ell}=-+$ is very similar). Then $\hp_{e_{\ell-1}+1}<\dots <\hp_{e_{\ell}}$ and $\hp_{e_{\ell}+1}>\dots >\hp_{e_{\ell+1}}$.
If $\hp_{e_\ell}<\hp_{e_\ell+1}$ (respectively, $\hp_{e_\ell}>\hp_{e_\ell+1}$), let $e'_\ell := e_\ell+1$ (respectively, $e'_\ell := e_\ell-1$), and $e'_t:=e_t$ for all $t\neq \ell$.
Clearly $e'_0,e'_1,\dots,e'_k$ is a $\sigma$-segmentation of $\hp$. Its induced $\pi$-monotone word $s'_1\dots s'_n$ differs from the original word $s_1s_2\dots s_n=q^2$ by one entry,
making the number of $\ell$s that appear in  $s'_1\dots s'_n$ be odd instead of even. Thus, $s'_1\dots s'_n$ can no longer be written as $(q')^2$ for any $q'$, so it is primitive by the above argument.

We have shown that there is always a $\sigma$-segmentation of $\hp$ such that the $\pi$-monotone word induced by it is primitive. Denote this word by $s_1\dots s_n$, and let $s=(s_1\dots s_n)^\infty$.
Now Lemma~\ref{lem:sisj} shows that $\PPat_\sigma(s)=\pi$.
\end{proof}

We can now combine the above lemmas to prove our main theorem.

\begin{proof}[Proof of Theorem \ref{thm: description of periodic patterns}]
If $\pi\in\PP(\Sigma_\sigma)$, then $\hp\in \C^\sigma$ by part 3 of Lemma \ref{lem: ppat}.
Conversely, $\pi\in\S_n$ is such that $\hp\in \C^\sigma$, then the word $s$ constructed in Lemma~\ref{lem: s1sn} satisfies $\PPat_\sigma(s)=\pi$, and so $\pi\in\PP(\Sigma_\sigma)$.
\end{proof}

For $\sigma=-^k$, the same proof yields the following weaker result.

\begin{proposition} \label{prop: description reverse}
Let $\sigma = -^k$. If $\pi\in\PP_n(\Sigma_\sigma)$, then $\hp\in \C_n^\sigma$. Additionally, the converse holds if $n\neq 2\bmod 4$.
\end{proposition}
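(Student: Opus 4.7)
The plan is to observe that the proof structure of Theorem~\ref{thm: description of periodic patterns} is assembled from lemmas whose hypotheses already accommodate the case $\sigma = -^k$, provided the restriction $n \not\equiv 2 \pmod 4$ is imposed on the converse direction. So no new machinery is needed; I only need to verify that each step applies in this specific case.

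First I would handle the forward implication. Lemma~\ref{lem: ppat} is stated and proved for an arbitrary $\sigma \in \{+,-\}^k$, with no exclusion. In particular, part~3 of that lemma says that given $\pi \in \PP_n(\Sigma_\sigma)$, the descent structure of $\hp$ admits a $\sigma$-segmentation, yielding $\hp \in \C^\sigma$. Since $\pi \in \S_n$ automatically gives $\hp \in \C_n$, we conclude $\hp \in \C_n^\sigma$, which is exactly the first assertion.

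For the converse, assume $n \not\equiv 2 \pmod 4$ and $\hp \in \C_n^\sigma$. Here I would invoke Lemma~\ref{lem: s1sn}, whose statement explicitly permits $\sigma = -^k$ under exactly the assumption $n \not\equiv 2 \pmod 4$. Applied to $\pi$, the lemma produces a $\sigma$-segmentation of $\hp$ whose induced $\pi$-monotone word $s_1 \dots s_n$ is primitive, and guarantees that $s = (s_1 \dots s_n)^\infty \in \W_{k,n}$ realizes $\PPat_\sigma(s) = \pi$. This places $\pi$ in $\PP_n(\Sigma_\sigma)$, as required.

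There is no substantive obstacle to overcome, because the proposition is effectively a corollary of the lemmas developed for Theorem~\ref{thm: description of periodic patterns}, and the numerical restriction $n \not\equiv 2 \pmod 4$ is inherited directly from Lemma~\ref{lem: s1sn}. That lemma isolated this condition precisely to rule out the pathological situation where the induced word equals $q^2$ with $q$ having an odd number of letters in $T^-_\sigma$; when $\sigma = -^k$ one has $T^-_\sigma = \{0,1,\dots,k-1\}$, so such a word forces $n = 2|q| = 2g \equiv 2 \pmod 4$, which is exactly the excluded case.
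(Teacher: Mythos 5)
Your proposal is correct and matches the paper's proof exactly: the paper simply notes that ``the same proof yields'' the proposition, meaning the forward direction follows from part~3 of Lemma~\ref{lem: ppat} and the converse from Lemma~\ref{lem: s1sn}, whose hypothesis already permits $\sigma=-^k$ when $n\neq 2\bmod 4$. Your closing remark about why the excluded case arises (the word $q^2$ with $g$ odd forcing $n=2g\equiv 2\bmod 4$) is also the correct reading of the relevant step in Lemma~\ref{lem: s1sn}.
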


In Section~\ref{sec:reverse2odd}, when we enumerate the periodic patterns of the reverse shift, we will see exactly for how many patterns the converse fails when $n\neq 2\bmod 4$.

We end this section describing the effect (also mentioned in~\cite{Amigosigned}) of reversing $\sigma$ on the allowed patterns of $\Sigma_\sigma$. Define the reversal of $\sigma=\sigma_0\sigma_1\dots\sigma_{k-1}$ to be $\sigma^R = \sigma_{k-1}\dots \sigma_1\sigma_0$. If $\pi\in\S_n$, then the complement of $\pi$ is the permutation $\pi^c$ where $\pi^c_i = n+1-\pi_i$ for $1\le i\le n$.

\begin{proposition} \label{prop: pi complement}
For every $\sigma\in\{+,-\}^k$,
$\pi\in\Allow(\Sigma_\sigma)$ if and only if $\pi^c\in\Allow(\Sigma_{\sigma^R})$, and $\pi\in\PP(\Sigma_\sigma)$ if and only if $\pi^c\in\PP(\Sigma_{\sigma^R})$.
\end{proposition}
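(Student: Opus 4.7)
My plan is to establish the proposition via the word-level complementation map $c:\W_k\to\W_k$ defined by $c(s_1s_2\dots) = \bar{s_1}\bar{s_2}\dots$, where $\bar{a}=k-1-a$. Since this map is an involution on $\W_k$ and visibly commutes with the raw shift (i.e., $c(\Sigma_\sigma(s))=\Sigma_{\sigma^R}(c(s))$ as words, the underlying operation being ``delete first letter'' in both cases), the entire proposition will follow once I show that $c$ is an \emph{order-reversing} bijection from $(\W_k,\prec_\sigma)$ to $(\W_k,\prec_{\sigma^R})$.

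The first step is to note the relationship between the partitions: by definition of $\sigma^R$, we have $t\in T^+_\sigma$ if and only if $k-1-t\in T^+_{\sigma^R}$, and similarly for the minus sets. Using this, I would prove the claim $s\prec_\sigma t\iff c(t)\prec_{\sigma^R}c(s)$ by inducting on the position $j$ of the first disagreement between $s$ and $t$ (equivalently, between $c(s)$ and $c(t)$). In the base case $j=1$, $s_1<t_1$ becomes $\bar{t_1}<\bar{s_1}$, matching condition (1) on the $\sigma^R$ side. In the inductive case, $s_1=t_1\in T^+_\sigma$ translates to $\bar{s_1}=\bar{t_1}\in T^+_{\sigma^R}$, and the inductive hypothesis flips the tail comparison in the correct direction; the $T^-_\sigma$ case is analogous, with the role of conditions (2) and (3) in the definition of $\prec$ swapping.

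Once $c$ is known to be order-reversing between the two orderings and to commute with the shift, I can read off the statement about patterns directly. For any $x\in\W_k$ for which $\Pat(x,\Sigma_\sigma,n)$ is defined, the finite sequence $x,\Sigma_\sigma(x),\dots,\Sigma_\sigma^{n-1}(x)$ maps under $c$ to $c(x),\Sigma_{\sigma^R}(c(x)),\dots,\Sigma_{\sigma^R}^{n-1}(c(x))$, and the order-reversing property of $c$ ensures that the reduction (with respect to $\prec_{\sigma^R}$) of the latter is the complement of the reduction (with respect to $\prec_\sigma$) of the former. Hence $\Pat(c(x),\Sigma_{\sigma^R},n) = \Pat(x,\Sigma_\sigma,n)^c$, which yields the ``Allow'' statement by symmetry of $c$.

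For the periodic-pattern half, I only need to observe that $c$ preserves $n$-periodic points: if $s=(s_1\dots s_n)^\infty$ then $c(s)=(\bar{s_1}\dots\bar{s_n})^\infty$, and primitivity of $\bar{s_1}\dots\bar{s_n}$ is equivalent to primitivity of $s_1\dots s_n$ (any nontrivial period of one is a period of the other). Therefore $c$ restricts to an involution on $\W_{k,n}$ that intertwines the two shifts and reverses the two orderings, so $\PPat_\sigma(s)=\pi$ if and only if $\PPat_{\sigma^R}(c(s))=\pi^c$, completing the proof. The only genuinely delicate step is the order-reversing claim for $c$, which is essentially a careful bookkeeping of how complementation interacts with the recursive definition of $\prec_\sigma$; everything else is formal.
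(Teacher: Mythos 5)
Your proposal is correct and follows essentially the same route as the paper: both use the letterwise complementation map on words, show it intertwines $\Sigma_\sigma$ with $\Sigma_{\sigma^R}$ and reverses the orders $\prec_\sigma$ and $\prec_{\sigma^R}$, and then read off both statements (the paper verifies order-reversal via the parity-of-$T^-_\sigma$-prefix characterization of $\prec_\sigma$ rather than your induction on the recursive definition, but these are interchangeable).
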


\begin{proof}
Given $s\in\W_k$, let $\tilde{s}\in\W_k$ be the word whose $i$th letter is $\tilde{s_i} = k-1-s_i$ for all $i$. For the first statement, it is enough to show that $\Pat(s,\Sigma_\sigma,n)=\pi$ if and only if $\Pat(\tilde{s},\Sigma_{\sigma^R})=\pi^c$.
This will follow if we show that for words $s$ and $t$, $s\prec_\sigma t$ if and only if $\tilde{t}\prec_{\sigma^R} \tilde{s}$.
To prove this claim, let $j$ be the first position where $s$ and $t$ differ. Then $j$ is also the first position where $\tilde{s}$ and $\tilde{t}$ differ, and $s_j<t_j$ if and only if $\tilde{s_j}>\tilde{t_j}$.
By definition, $s_i \in T^-_\sigma$ if and only if  $\tilde{s_i} \in T^-_{\sigma^R}$, from where the claim follows.

Since $s$ is periodic of period $n$ if and only if $\tilde{s}$ is as well, the result for periodic patterns holds as well.
\end{proof}

Proposition~\ref{prop: pi complement} can be stated more generally for any function $f:[0,1]\to[0,1]$ as follows. If we define $g:[0,1]\to[0,1]$ by $g(x)=1-f(1-x)$, we see by induction on $i$ that $g^i(x)=1-f^i(1-x)$ for $i\ge1$.
It follows that $\Pat(x,g,n)=\Pat(1-x,f,n)^c$ for all $x\in[0,1]$, and so $f$ and $g$ have the same number of allowed and periodic patterns.

\section{Enumeration for special cases}\label{sec:specialcases}

For particular values of $\sigma$, we can give a formula for the number of periodic patterns of $\Sigma_\sigma$. This is the case when $\sigma= +-$ (equivalently, when $\sigma=-+$), $\sigma = +^k$, and $\sigma =-^k$, for any $k\ge2$.
Recall that $p_n(\Sigma_\sigma)=|\PPP_n(\Sigma_\sigma)|$ is the number of equivalence classes $[\pi]$ of periodic patterns of $\Sigma_\sigma$.

\begin{lemma}\label{lem:periodic orbits}
For any $\sigma\in\{+,-\}^k$ with $k\geq 2$, the number of $n$-periodic orbits of $\Sigma_\sigma$ is
\begin{equation}\label{eq:Lk} |\WW_{k,n}|=L_k(n) = \frac{1}{n}\sum_{d|n} \mu(d)k^{\frac{n}{d}},\end{equation}
where $\mu$ denotes the M\"obius function.
\end{lemma}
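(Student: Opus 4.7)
The plan is to use the explicit description of $n$-periodic points of $\Sigma_\sigma$ given right after the second definition of the signed shift: the $n$-periodic points are precisely the words $s=(s_1s_2\dots s_n)^\infty$ where $s_1s_2\dots s_n$ is primitive, and an $n$-periodic orbit consists of the $n$ cyclic shifts of such a word. This reduces the lemma to a purely combinatorial count that does not actually depend on $\sigma$ at all.

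First I would observe that, via the identification $s\mapsto s_1s_2\dots s_n$, the set $\W_{k,n}$ is in bijection with the set of primitive words of length $n$ over the alphabet $\{0,1,\dots,k-1\}$, and $\WW_{k,n}$ is in bijection with the set of equivalence classes of such words under cyclic rotation. Moreover, if $s_1\dots s_n$ is primitive then its $n$ cyclic shifts are all distinct (any coincidence would give a strictly shorter period), so each class in $\WW_{k,n}$ has size exactly $n$, and hence
\[
|\WW_{k,n}| = \frac{P_k(n)}{n},
\]
where $P_k(n)$ denotes the number of primitive words of length $n$ over a $k$-letter alphabet.

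Next I would count $P_k(n)$ by Möbius inversion. Every word $w$ of length $n$ has a unique minimal period $d\mid n$ and is obtained by concatenating $n/d$ copies of a primitive word of length $d$. Grouping words by this minimal period yields
\[
k^n \;=\; \sum_{d\mid n} P_k(d).
\]
Möbius inversion gives $P_k(n)=\sum_{d\mid n}\mu(d)\,k^{n/d}$, and substituting this into the previous display yields equation~\eqref{eq:Lk}.

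There is no real obstacle here; the only subtlety is making sure that cyclic rotation acts freely on primitive words (so that orbits truly have size $n$), which follows directly from the definition of primitivity. Everything else is standard necklace enumeration, and the result is independent of the signature $\sigma$ because $\Sigma_\sigma$ acts by plain shifting on $(\W_k,\prec_\sigma)$.
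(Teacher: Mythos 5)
Your proof is correct and follows essentially the same route as the paper: both reduce the count to primitive necklaces of length $n$ over $k$ colors, the paper then simply citing the standard formula that you derive explicitly via M\"obius inversion. The extra details you supply (freeness of the rotation action on primitive words and the identity $k^n=\sum_{d\mid n}P_k(d)$) are exactly the standard argument the paper is invoking as ``well-known.''
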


\begin{proof}
Recall that $\WW_{k,n}$ is the set of $n$-periodic orbits of $\Sigma_\sigma$, and that each such orbit consists of the $n$ shifts of an $n$-periodic point $s=(s_1s_2\dots s_n)^\infty\in\W_{k,n}$, where $s_1s_2\dots s_n$ is primitive.
Thus, $|\WW_{k,n}|$ is the number of primitive words of length $n$ over a $k$-letter alphabet up to cyclic rotation. In different terminology, this is the number of $n$-bead primitive necklaces with $k$ colors, which is well-known to equal the formula in the statement.
\end{proof}

In some of the proofs in this section, it will be useful to refer to the diagram in Figure~\ref{fig:maps}, which summarizes some of the maps involved.
If $\sigma\in\{+,-\}^k$, we denote by $\PPPat_\sigma$ (or  $\PPPat_{\Sigma_\sigma}$) the map from
$\WW_{k,n}$ to $\PPP_n(\Sigma_\sigma)$ that sends the periodic orbit of $s\in\W_{k,n}$ to the equivalence class $[\pi]$, where $\pi=\PPat_\sigma(s)$. This map is clearly well defined.

\begin{figure}[htb]
\centering
\begin{tikzpicture}
  \matrix (m) [matrix of math nodes,row sep=2.5em,column sep=4em,minimum width=2em]
  {
     \W_{k,n} & \PP_n(\Sigma_\sigma) &  \\
     \WW_{k,n} & \PPP_n(\Sigma_\sigma) & \C_n^\sigma \\
     };
  \path[-stealth]
    (m-1-1) edge [dashed] (m-2-1)
            edge node [above] {\footnotesize $\PPat_\sigma$} (m-1-2)
    (m-1-2) edge [dashed] (m-2-2)
            edge node [above right] {$\pi\mapsto\hp$} (m-2-3)
    (m-2-1) edge node [above] {\footnotesize $\PPPat_\sigma$} (m-2-2)
    (m-2-2) edge node [above] {$\theta$} (m-2-3);
\end{tikzpicture}
\caption{Some maps used in Section~\ref{sec:specialcases}. The map from $\W_{k,n}$ to $\WW_{k,n}$ maps each periodic point to its corresponding periodic orbit, and the map from $\PP_n(\Sigma_\sigma)$ to $\PPP_n(\Sigma_\sigma)$ maps each permutation $\pi$ to its equivalence class $[\pi]$.
The map $\PPat_\sigma$ (equivalently, $\PPPat_\sigma$) is surjective by definition, but in general not injective.
By Theorem~\ref{thm: description of periodic patterns} and Proposition~\ref{prop: description reverse}, the map $\theta$ restricts to a bijection between $\PPP_n(\Sigma_\sigma)$ and $\C_n^\sigma$, unless $\sigma=-^k$ and $n=2\bmod4$.}
\label{fig:maps}
\end{figure}

The following lemmas will also be needed when counting periodic patterns of shifts and reverse shifts.

\begin{lemma}\label{lem:iff}
Suppose that $\sigma=+^k$, or that $\sigma=-^k$ and $n\neq2\bmod4$. Let $\pi\in\PP_n(\Sigma_\sigma)$ (equivalently, by Theorem~\ref{thm: description of periodic patterns} and Proposition~\ref{prop: description reverse}, $\hp\in\C_n^\sigma$), and let $s=(s_1\dots s_n)^\infty\in\W_{k,n}$.
Then $s$ satisfies $\PPat_\sigma(s)=\pi$ if and only if $s_1\dots s_n$ is the $\pi$-monotone word induced by some $\sigma$-segmentation of $\hp$.
\end{lemma}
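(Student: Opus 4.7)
The plan is to read both directions off immediately from the lemmas already in place, using the hypothesis on $\sigma$ to invoke the stronger ``every $\sigma$-segmentation works'' clause.

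For the forward direction, I would start with $s=(s_1\dots s_n)^\infty\in\W_{k,n}$ satisfying $\PPat_\sigma(s)=\pi$ and define $d_t=|\{i\in[n]:s_i<t\}|$ as in Lemma~\ref{lem: ppat}. Part~1 of that lemma says that $s_i=t$ iff $d_t<\pi_i\le d_{t+1}$, which is exactly the condition that $s_1\dots s_n$ is the $\pi$-monotone word induced by $(d_0,d_1,\dots,d_k)$. Part~3 says that $(d_0,d_1,\dots,d_k)$ is a $\sigma$-segmentation of $\hp$. Combining these two facts finishes the forward direction at once, by uniqueness of the induced $\pi$-monotone word.

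For the backward direction, I would suppose that $s_1\dots s_n$ is the $\pi$-monotone word induced by some $\sigma$-segmentation $0=e_0\le e_1\le\dots\le e_k=n$ of $\hp$ and simply invoke the ``furthermore'' clause of Lemma~\ref{lem: s1sn}. Under the hypothesis $\sigma=+^k$, or $\sigma=-^k$ with $n\not\equiv 2\bmod 4$, that clause says that \emph{every} $\sigma$-segmentation of $\hp$ yields a primitive $\pi$-monotone word $s_1\dots s_n$ whose associated periodic sequence $s$ satisfies $\PPat_\sigma(s)=\pi$. In particular, $s_1\dots s_n$ is primitive (so that $s\in\W_{k,n}$ is actually an $n$-periodic point), and $\PPat_\sigma(s)=\pi$, as required.

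The only thing to double-check, and the one potential pitfall, is that the hypothesis of Lemma~\ref{lem:iff} matches the hypothesis of the ``furthermore'' clause of Lemma~\ref{lem: s1sn} exactly: the condition $n\not\equiv 2\bmod 4$ is carried over from the standing assumption in Lemma~\ref{lem: s1sn} itself. This restriction is genuinely needed: for a general $\sigma$, a $\sigma$-segmentation of $\hp$ may induce a non-primitive word of the form $q^m$ (as analyzed in the proof of Lemma~\ref{lem: s1sn}), in which case the induced $s$ does not even lie in $\W_{k,n}$. So the whole content of the lemma is that for the ``trivial'' signatures $+^k$ and $-^k$ (with the mild parity restriction in the latter case), the correspondence between periodic points realizing $\pi$ and $\sigma$-segmentations of $\hp$ is fully symmetric, with no bad segmentations to discard.
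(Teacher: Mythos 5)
Your proposal is correct and matches the paper's own proof essentially verbatim: the forward direction combines parts~1 and~3 of Lemma~\ref{lem: ppat} exactly as the authors do, and the backward direction invokes the final (``every $\sigma$-segmentation'') clause of Lemma~\ref{lem: s1sn} under the stated hypotheses on $\sigma$ and $n$. Your closing remark about why the parity restriction is needed is a sensible sanity check, though it is not part of the argument itself.
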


\begin{proof}
If $\PPat_\sigma(s)=\pi$, then by Lemma~\ref{lem: ppat}, $s_1\dots s_n$ is the $\pi$-monotone word induced by the $\sigma$-segmentation $d_0,d_1,\dots,d_k$, where $d_t=|\{i\in[n]:s_i< t\}|$.
Conversely, given any $\sigma$-segmentation $e_0,e_1,\dots,e_k$ of $\hp$, then the last statement in Lemma~\ref{lem: s1sn} implies that the $\pi$-monotone word $s_1\dots s_n$ induced by it satisfies $\PPat_\sigma((s_1\dots s_n)^\infty)=\pi$.
\end{proof}

\subsection{The tent map}

We denote the tent map by $\tent=\Sigma_{+-}$. Recall that $\tent$ is order-isomorphic to the map on the unit interval whose graph appears on the left of Figure~\ref{fig:maps}.
The characterization of the periodic patterns of $\tent$ follows from Theorem \ref{thm: description of periodic patterns}, noticing that permutations in the class $[\Av(21) \, \, \Av(12)]$ are precisely unimodal permutations.

\begin{corollary}\label{cor:PPtent}
$\pi\in\PP(\tent)$ if and only if $\hp$ is unimodal.
\end{corollary}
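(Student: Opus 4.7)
The plan is that this corollary is an essentially immediate specialization of Theorem~\ref{thm: description of periodic patterns} to the signature $\sigma = +-$, so the proof will be a one-line deduction together with a brief sanity check that the hypotheses and class identifications are correct.

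First I would observe that $\sigma = +-\neq -^k$, so Theorem~\ref{thm: description of periodic patterns} applies directly and gives $\pi\in\PP(\tent)=\PP(\Sigma_{+-})$ if and only if $\hp\in\C^{+-}$. Next I would recall the identification of $\S^{+-}$ made earlier in the text: by definition $\S^{+-}=[\Av(21)\,\Av(12)]$, which is precisely the class of unimodal permutations (as explicitly noted immediately after the definition of $\S^\sigma$). Since $\C^{+-}=\C\cap\S^{+-}$ and the map $\pi\mapsto\hp$ automatically lands in $\C_n$, the condition $\hp\in\C^{+-}$ reduces to the condition that $\hp$ is unimodal. Combining these two observations yields the corollary.

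There is really no obstacle here, so in writing it out I would only need to take care that the reader sees the two ingredients explicitly: the applicability of Theorem~\ref{thm: description of periodic patterns} (because $+-\neq -^k$) and the identification $\S^{+-}=$ unimodal permutations. No computation, no auxiliary lemma, and no case analysis is required.
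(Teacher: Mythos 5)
Your proposal is correct and matches the paper's own argument exactly: the paper likewise derives the corollary by applying Theorem~\ref{thm: description of periodic patterns} with $\sigma=+-$ and noting that $[\Av(21)\,\Av(12)]$ is precisely the class of unimodal permutations. Nothing is missing.
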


The proof of the next theorem, which gives a formula for the number of periodic patterns of the tent map,
is closely related to a construction of Weiss and Rogers~\cite{WR} aimed at counting unimodal cyclic permutations, even though
the notion of periodic patterns of a map had not yet been developed.

\begin{theorem}\label{thm:enumtent}
$$p_n(\tent) =  \frac{1}{2n} \sum_{\substack{ d|n \\ d\ \mathrm{odd}}} \mu(d) 2^{\frac{n}{d}}.$$
\end{theorem}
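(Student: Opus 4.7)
Paragraph 1 (Setup): By Corollary~\ref{cor:PPtent}, $p_n(\tent)=|\PPP_n(\tent)|$, and I will compute this via the surjection $\PPPat_\sigma\colon\WW_{2,n}\to\PPP_n(\tent)$ of Figure~\ref{fig:maps} for $\sigma=+-$, whose domain has size $L_2(n)$ by Lemma~\ref{lem:periodic orbits}. The plan is to show that every fiber of $\PPPat_\sigma$ has size $1$ or $2$, characterize the size-$1$ fibers, and count them via M\"obius inversion.

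Paragraph 2 (Fiber analysis): For each $[\pi]\in\PPP_n(\tent)$, the unimodal permutation $\hp$ has a unique peak position $j^*$ (with $\hp_{j^*}=n$), so the only $\sigma$-segmentations of $\hp$ are $e_1=j^*-1$ and $e_1=j^*$. These induce two candidate $\pi$-monotone words $s,s'\in\{0,1\}^n$ differing only at the position $i^*$ where $\pi_{i^*}=j^*$. By Lemma~\ref{lem: ppat} every primitive preimage of $\pi$ under $\PPat_\sigma$ must be one of these two candidates, while by Lemma~\ref{lem:sisj} every primitive candidate is in fact a preimage; hence the fiber over $[\pi]$ is the set of cyclic-rotation orbits of those candidates that happen to be primitive, and therefore has size $1$ or $2$.

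Paragraph 3 (Exceptional fibers): A careful reading of the proof of Lemma~\ref{lem: s1sn} shows that a candidate can fail to be primitive only in the ``$m=2$, $g$ odd'' case, which for $\sigma=+-$ means the candidate has the form $q^2$ for some primitive word $q$ of length $n/2$ containing an odd number of $1$s; in particular this forces $n$ to be even. Let $b_n$ be the number of $[\pi]$ with a non-primitive candidate. Then $L_2(n)=2p_n(\tent)-b_n$, so $p_n(\tent)=(L_2(n)+b_n)/2$. For odd $n$ we get $b_n=0$, so the claim reduces to $p_n(\tent)=L_2(n)/2$, which matches the stated formula because every divisor of $n$ is odd. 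For even $n$, the non-primitive candidate $s=q^2$ has period $n/2$, so its $n$ cyclic rotations collapse to $n/2$ distinct words in bijection with the rotations of $q$; hence the map $[\pi]\mapsto[q]$ is a bijection between size-$1$ fibers and cyclic-rotation orbits of primitive length-$(n/2)$ binary words with an odd number of $1$s. Writing $a(m)$ for the number of such primitive words of length $m$, this gives $b_n=a(n/2)/(n/2)$.

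Paragraph 4 (M\"obius inversion and main obstacle): A length-$m$ binary word $u^{m/d}$, with $u$ primitive of length $d\mid m$, has an odd number of $1$s if and only if both $m/d$ and the number of $1$s in $u$ are odd. Decomposing length-$m$ odd-parity words by primitive period yields $2^{m-1}=\sum_{d\mid m,\,m/d\text{ odd}}a(d)$, and restricted M\"obius inversion on the lattice of odd divisors of $m$ produces $a(m)=\frac{1}{2}\sum_{e\mid m,\,e\text{ odd}}\mu(e)2^{m/e}$. Substituting $b_n=a(n/2)/(n/2)$ into $p_n(\tent)=(L_2(n)+b_n)/2$ and using $\mu(2e)=-\mu(e)$ for odd squarefree $e$, the even-divisor terms of $L_2(n)$ cancel precisely against the $b_n$ contribution, yielding the claimed $p_n(\tent)=\frac{1}{2n}\sum_{d\mid n,\,d\text{ odd}}\mu(d)2^{n/d}$. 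The main obstacle is the characterization in Paragraph~3 of which candidates are non-primitive, which requires extracting the ``$m=2$, $g$ odd'' exceptional case from the proof of Lemma~\ref{lem: s1sn}; once that is in place, the remaining counting and inversion steps are routine.
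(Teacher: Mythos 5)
Your fiber analysis in Paragraph~2, the identity $L_2(n)=2p_n(\tent)-b_n$, and the M\"obius inversion and cancellation in Paragraph~4 are all sound, and your overall strategy (count orbits in $\WW_{2,n}$ by fibers of $\PPPat_{+-}$) is the same device the paper uses elsewhere. The gap is the sentence in Paragraph~3 asserting that $[\pi]\mapsto[q]$ is a bijection between size-$1$ fibers and cyclic-rotation orbits of primitive length-$\frac{n}{2}$ binary words with an odd number of ones. The observation that the $n$ rotations of $q^2$ collapse to $\frac{n}{2}$ words only shows the map is well defined on equivalence classes; it gives neither injectivity nor surjectivity. Surjectivity is the real issue: given a primitive $q$ of length $\frac{n}{2}$ with an odd number of ones, you must produce a $\pi$ with $\hp$ unimodal such that $q^2$ is the $\pi$-monotone word induced by one of the two $+-$-segmentations of $\hp$, and you cannot get $\pi$ as $\PPat_{+-}((q^2)^\infty)$ because $q^2$ is not primitive and that pattern is undefined. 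The paper faces exactly this difficulty for the reverse shift when $n=2\bmod 4$ (Theorems~\ref{thm:enum2rs2mod4} and~\ref{thm:enumrs2mod4}), and closing it there requires an explicit construction of $\pi$ from $\rho=\PPat(q^\infty)$ (interleaving $2\rho_i-1$ and $2\rho_i$) plus an inductive uniqueness argument via Lemma~\ref{lem: Csigma} to show the map is exactly $2$-to-$1$ on permutations; an analogous (and not shorter) argument would be needed here, and the interleaving must even be modified since $\frac{n}{2}$ need not be odd for the tent map.

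The paper's own proof sidesteps all of this with a parity observation you already have all the ingredients for: your two candidate words differ in exactly one position, so exactly one of them has an odd number of ones; and a non-primitive candidate is necessarily of the form $q^2$ (with $q$ having an odd number of ones, since $T^-_{+-}=\{1\}$), hence has an \emph{even} number of ones. Therefore the odd-parity candidate is always primitive, and $[\pi]\mapsto(\mbox{odd-parity candidate})$ is a bijection from $\PPP_n(\tent)$ onto the set of primitive length-$n$ necklaces with an odd number of ones --- injectivity and surjectivity now being immediate because $\PPat_{+-}$ is a well-defined function on primitive words (Lemmas~\ref{lem: ppat} and~\ref{lem:sisj}). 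This gives $p_n(\tent)=a(n)/n$ directly, and your M\"obius inversion from Paragraph~4, applied to $a(n)$ instead of to $a(n/2)$, finishes the proof without ever needing to identify the size-$1$ fibers. I recommend you either adopt this shortcut or supply the full $2$-to-$1$ argument for $\varphi\colon\pi\mapsto q$ in the style of Section~\ref{sec:reverse2odd}.
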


\begin{proof}
Let $\O_n$ be the set of binary words $s = (s_1\dots s_n)^\infty\in\W_{2,n}$ where the primitive word $s_1\dots s_n$ has an odd number of ones.
We will show that the map $\PPat_\tent:\W_{2,n}\rightarrow\PP_n(\tent)$ restricts a bijection between $\O_n$ and $\PP_n(\tent)$.
We will prove that for each $\pi\in\PP_n(\tent)$ there are either one or two
elements $s\in\W_{2,n}$ such that $\PPat_\tent(s)=\pi$, and that exactly one of them is in $\O_n$.

Fix $\pi\in\PP_n(\tent)$, and recall from Corollary~\ref{cor:PPtent} that $\hp_1<\hp_2<\dots <\hp_m>\hp_{m+1}>\dots > \hp_n$ for some $m$.
Let $s = (s_1\dots s_n)^\infty\in\W_{2,n}$ be such that $\PPat_\tent(s)=\pi$, and let $d=|\{1\le i\le n: s_i=0\}|$.
By part 3 of Lemma~\ref{lem: ppat}, we have $\hp_1<\hp_2<\dots <\hp_d$ and $\hp_{d+1}>\hp_{d+2}>\dots >\hp_n$.
It follows that $d=m$ or $d=m-1$, corresponding to the two $+-$-segmentations of $\hp$.
Since $s_1\dots s_n$ is $\pi$-monotone by part 1 of Lemma~\ref{lem: ppat}, the words corresponding to these two possibilities for $d$ differ only in one position (specifically, in $s_j$, where $j$ is such that $\pi_j=m$).
In general, as shown in the proof of Lemma~\ref{lem: s1sn}, the word $s_1\dots s_n$ induced by a $+-$-segmentation of $\hp$ may not be primitive. However, this can only happen when $n$ is even and $s_1\dots s_n = q^2$,
in which case $s_1\dots s_n$ would have an even number of ones.
Thus, among the two possibilities for $d$, the one in which $s_1\dots s_n$ has an odd number of ones guarantees that this word is primitive, and so $s\in\O_n$ and $\PPat_\tent(s)=\pi$.

To find $|\O_n|$ we use the M\"obius inversion formula. Let $a(n)$ be the number of primitive binary words of length $n$ with an odd number of ones, and note that $a(n)=|\O_n|=|\PP_n(\tent)|$.
Let $b(n)$ be the number of binary words (not necessarily primitive) of length $n$ with an odd number of ones. It is well known that $b(n)=2^{n-1}$. Every such word can be written as $s=q^d$,
where $q$ is primitive and has an odd number of ones, and $d$ is an odd number that divides $n$. It follows that
$$b(n)=\sum_{\substack{d|n\\ d\ \mathrm{odd}}}a(n/d)=\sum_{\substack{d|n\\ n/d\ \mathrm{odd}}}a(d).$$
Writing $n=2^r m$, where $m$ is odd, and letting $b_r(m)=b(2^r m)$ and $a_r(m)=a(2^r m)$, the above formula becomes
$$b_r(m)=b(2^r m)=\sum_{d|m}a(2^r d)=\sum_{d|m}a_r(d).$$
Thus, by M\"obius inversion,
$$a_r(m)=\sum_{d|m}\mu(d)b_r(m/d),$$
which is equivalent to
$$a(n)=\sum_{d|m}\mu(d)b(n/d)=\sum_{\substack{d|n\\ d\ \mathrm{odd}}}\mu(d)b(n/d)=\sum_{\substack{d|n\\ d\ \mathrm{odd}}}\mu(d)2^{n/d-1}.$$
It follows that
$$p_n(\tent) = \frac{1}{n}a(n)=\frac{1}{2n}\sum_{\substack{d|n\\ d\ \mathrm{odd}}}\mu(d)2^{n/d}.$$
\end{proof}

The proof of Theorem \ref{thm:enumtent}, in combination with Corollary \ref{cor:PPtent}, shows that the map $\theta\circ\PPPat_\tent: \WW_{2,n} \to \C^{+-}_n$ is a bijection between primitive binary necklaces (words up to cyclic rotation) with an odd number of ones and unimodal cyclic permutations.

\begin{corollary}\label{cor:-+}
$$p_n(\Sigma_{-+}) =  \frac{1}{2n} \sum_{\substack{ d|n \\ d\ \mathrm{odd} }} \mu(d) 2^{\frac{n}{d}}.$$
\end{corollary}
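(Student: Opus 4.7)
The plan is to deduce this directly from Theorem~\ref{thm:enumtent} via the symmetry established in Proposition~\ref{prop: pi complement}, rather than repeating the bijective argument used for the tent map. Observe that the reversal of the signature $+-$ is exactly $-+$, so Proposition~\ref{prop: pi complement} gives that $\pi \mapsto \pi^c$ is a bijection between $\PP_n(\tent) = \PP_n(\Sigma_{+-})$ and $\PP_n(\Sigma_{-+})$.

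The only thing I need to check is that this bijection descends to the level of equivalence classes, i.e., that it induces a bijection between $\PPP_n(\tent)$ and $\PPP_n(\Sigma_{-+})$. This is a short calculation: complementation commutes with cyclic rotation in one-line notation, since if $\pi' = \pi_i\pi_{i+1}\dots\pi_n\pi_1\dots\pi_{i-1}$ is a rotation of $\pi$, then $(\pi')^c$ is exactly the corresponding rotation of $\pi^c$. Hence $[\pi^c] = \{\rho^c : \rho \in [\pi]\}$, so the map $[\pi] \mapsto [\pi^c]$ is a well-defined bijection on equivalence classes, and it restricts to a bijection between $\PPP_n(\tent)$ and $\PPP_n(\Sigma_{-+})$.

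Consequently $p_n(\Sigma_{-+}) = p_n(\tent)$, and substituting the formula from Theorem~\ref{thm:enumtent} yields the claimed expression. There is no real obstacle here; the only point that requires a moment's thought is the compatibility of complementation with the equivalence relation of cyclic rotation, and this is immediate from the definition of $[\pi]$.
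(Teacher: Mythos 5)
Your proposal is correct and follows exactly the paper's own route: the paper derives this corollary directly from Proposition~\ref{prop: pi complement} and Theorem~\ref{thm:enumtent}. Your extra check that complementation commutes with cyclic rotation (so the bijection descends to equivalence classes and $p_n(\Sigma_{-+})=p_n(\tent)$) is a correct and worthwhile detail that the paper leaves implicit.
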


\begin{proof}
This follows immediately from Proposition \ref{prop: pi complement} and Theorem \ref{thm:enumtent}.
\end{proof}

\subsection{The $k$-shift}

Recall that the $k$-shift is the map $\Sigma_\sigma$ where $\sigma =+^k$. Let us denote this map by $\Sigma_k$ for convenience. The second picture in Figure~\ref{fig:maps} shows the graph of a map on the unit interval that is order-isomorphic to $\Sigma_3$.
The allowed patterns of the $k$-shift were characterized and enumerated by Elizalde~\cite{Elishifts}, building up on work by Amig\'o {\it et al.}~\cite{AEK}.

In this section we describe and enumerate the periodic patterns of the $k$-shift. Denote the descent set of $\pi\in\S_n$ by $\Des(\pi)=\{i\in[n-1]:\pi_i>\pi_{i+1}\}$, and by $\des(\pi)=|\Des(\pi)|$ the number of descents of $\pi$.
In the case of the $k$-shift, Theorem \ref{thm: description of periodic patterns} states that
$\pi\in\PP(\Sigma_k)$ if and only if $\hp$ is a cyclic permutation that can be written as a concatenation of $k$ increasing sequences. The following corollary follows from this description.

\begin{corollary}\label{cor:PPshift}
$\pi\in\PP(\Sigma_k)$ if and only if $\des(\hp)\le k-1$.
\end{corollary}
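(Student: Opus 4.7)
The plan is to apply Theorem~\ref{thm: description of periodic patterns} with $\sigma = +^k$ and reinterpret membership in $\C^{+^k}$ as a bound on the descent count. Since $T^+_\sigma = \{0,1,\dots,k-1\}$ and $T^-_\sigma = \emptyset$ when $\sigma = +^k$, a $\sigma$-segmentation of $\hp$ is simply a sequence $0=e_0\le e_1\le\dots\le e_k=n$ such that each block $\hp_{e_t+1}\dots\hp_{e_{t+1}}$ is (weakly, hence strictly) increasing. Thus $\hp\in\C^{+^k}$ iff $\hp$ is cyclic and can be written as the concatenation of $k$ (possibly empty) increasing runs.

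For the forward direction, I would assume $\pi\in\PP(\Sigma_k)$, so by Theorem~\ref{thm: description of periodic patterns} $\hp$ admits a $+^k$-segmentation $0=e_0\le e_1\le\dots\le e_k=n$. Since each block $\hp_{e_t+1}\dots\hp_{e_{t+1}}$ is increasing, any descent of $\hp$ must be at a position of the form $e_t$ with $1\le t\le k-1$. Hence $\des(\hp)\le k-1$.

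For the backward direction, I would assume $\des(\hp)\le k-1$ and construct an explicit $+^k$-segmentation. Let $i_1<i_2<\dots<i_j$ be the descent positions of $\hp$, with $j\le k-1$. Define $e_0=0$, $e_t=i_t$ for $1\le t\le j$, and $e_t=n$ for $j< t\le k$. Each resulting block lies within a maximal increasing run of $\hp$, so it is increasing, and the sequence is weakly increasing from $0$ to $n$. This is a valid $+^k$-segmentation, so $\hp\in\C^{+^k}$. Since $\hp$ is cyclic (as $\hp\in\C_n$ for any $\pi\in\S_n$), and applying the converse direction of Theorem~\ref{thm: description of periodic patterns} yields $\pi\in\PP(\Sigma_k)$.

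There is essentially no obstacle here: the statement is a direct corollary of Theorem~\ref{thm: description of periodic patterns} once one unpacks what a $+^k$-segmentation means. The only thing to check carefully is that the empty blocks are allowed (which they are, since the $e_t$ need only be weakly increasing), so that fewer than $k-1$ descents still yield a valid segmentation by padding with repeated values of $n$ at the end.
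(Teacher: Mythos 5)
Your proof is correct and follows the same route as the paper: the corollary is obtained by specializing Theorem~\ref{thm: description of periodic patterns} to $\sigma=+^k$ and observing that a permutation admits a $+^k$-segmentation (i.e., is a concatenation of $k$ increasing, possibly empty, blocks) exactly when it has at most $k-1$ descents. The paper states this equivalence without spelling out the two directions; your added detail, including the remark about padding with empty blocks, is just an explicit version of the same argument.
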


In other words, $\theta$ gives a bijection between $\PPP_n(\Sigma_k)$ and permutations in $\C_n$ with at most $k-1$ descents.
It will be convenient to define, for $1\le i\le n$,
$$C(n,i)=|\{\tau\in\C_n:\des(\tau)=i-1\}|.$$
We start by giving a formula for the number of periodic patterns of the binary shift. Recall the formula for $L_k(n)$ given in Eq.~\eqref{eq:Lk}.

\begin{theorem}\label{thm:enum2shift}
For $n\ge2$,
$$p_n(\Sigma_2)=C(n,2)=L_2(n).$$
\end{theorem}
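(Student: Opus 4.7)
The plan is to establish the two equalities $p_n(\Sigma_2) = C(n,2)$ and $C(n,2) = L_2(n)$ in turn, using the bijective framework summarized in Figure~\ref{fig:maps}.

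For the first equality, I would invoke Theorem~\ref{thm: description of periodic patterns}: the map $\theta$ restricts to a bijection from $\PPP_n(\Sigma_2)$ onto $\C_n^{++}$. By definition, $\C_n^{++}$ consists of the cyclic permutations admitting a $(+,+)$-segmentation, i.e., those that can be written as a concatenation of two increasing runs; equivalently, those with at most one descent. For $n\ge 2$, the only permutation with zero descents is the identity $12\cdots n$, which is not an $n$-cycle, so every element of $\C_n^{++}$ has exactly one descent. Hence $|\C_n^{++}| = C(n,2)$, which yields the first equality.

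For the second equality, the plan is to show that $\PPPat_{\Sigma_2}\colon \WW_{2,n} \to \PPP_n(\Sigma_2)$ is a bijection and then conclude via Lemma~\ref{lem:periodic orbits} that $p_n(\Sigma_2) = |\WW_{2,n}| = L_2(n)$. Surjectivity is immediate from the definition of $\PPPat_{\Sigma_2}$. For injectivity, I would apply Lemma~\ref{lem:iff} with $\sigma = +^2$: for each $\pi\in\PP_n(\Sigma_2)$, the words $s\in\W_{2,n}$ with $\PPat_{\Sigma_2}(s)=\pi$ correspond bijectively to the $(+,+)$-segmentations of $\hp$. Such a segmentation is a choice of $e_1\in\{0,1,\dots,n\}$ with $\hp_1<\cdots<\hp_{e_1}$ and $\hp_{e_1+1}<\cdots<\hp_n$; since the first part of the proof forces $\des(\hp)=1$, the integer $e_1$ must coincide with the unique descent position of $\hp$, so the preimage has exactly one element. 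Lifting to orbits and equivalence classes, each of which has size exactly $n$ (because the words in $\W_{2,n}$ are primitive and the entries of any $\pi\in\S_n$ are distinct), we obtain the desired bijection.

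I do not foresee any serious obstacle, since the heavy machinery was set up in Theorem~\ref{thm: description of periodic patterns}, Lemma~\ref{lem:iff}, and Lemma~\ref{lem:periodic orbits}. The one small verification required is the uniqueness of the $(+,+)$-segmentation of $\hp$, which is essentially immediate once its descent count is pinned down at $1$.
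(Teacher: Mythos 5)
Your proposal is correct and follows essentially the same route as the paper: the first equality via the characterization $\theta:\PPP_n(\Sigma_2)\to\C_n^{++}$ together with the observation that an $n$-cycle with $n\ge2$ cannot be descent-free, and the second via Lemma~\ref{lem:iff} plus the uniqueness of the $++$-segmentation of a cycle with exactly one descent, which makes $\PPat_{\Sigma_2}$ (hence $\PPPat_{\Sigma_2}$) injective. No gaps.
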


\begin{proof}
When $n\ge2$, there are no permutations in $\C_n$ with no descents. By Proposition~\ref{prop: description reverse}, the map $\theta$ gives a bijection between $\PPP(\Sigma_2)$ and $\C_n^{++}=\{\tau\in\C_n:\des(\tau)=1\}$, so $p_n(\Sigma_2)=C(n,2)$.

Next we show that the map $\PPat_{\Sigma_2}:\W_{2,n}\to\PP_n(\Sigma_2)$ is a bijection, and therefore so
is the map $\PPPat_{\Sigma_2}:\WW_{2,n}\to\PPP_n(\Sigma_2)$ (see Figure~\ref{fig:maps}), which implies that $p_n(\Sigma_2)=L_2(n)$ by Lemma~\ref{lem:periodic orbits}.
By definition, the map $\PPat_{\Sigma_2}:\W_{2,n}\to\PP_n(\Sigma_2)$ is surjective, so we just need to show that it is injective as well.
Let $\pi\in\PP_n(\Sigma_2)$, and let $s=(s_1\dots s_n)^\infty\in\W_{2,n}$ be such that $\PPat_{\Sigma_2}(s) = \pi$.
By Lemma~\ref{lem:iff}, $s_1\dots s_n$ is the $\pi$-monotone word induced by some $++$-segmentation of $\hp$. Since $\hp$ has one descent, there is only one such segmentation, so $s$ is uniquely determined.
\end{proof}

For $k\ge3$, we have the following result.

\begin{theorem}\label{thm: enum shift}
For $k\ge3$ and $n\ge2$,
$$p_n(\Sigma_k)-p_n(\Sigma_{k-1})=C(n,k) = L_k(n) - \sum_{i = 2}^{k-1} \binom{n+k-i}{k-i}C(n,i).$$
\end{theorem}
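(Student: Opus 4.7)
The plan is to prove the two equalities separately, where the first is an immediate corollary of earlier results, and the second follows from a double-count of the periodic words in $\W_{k,n}$.

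First I would dispose of the identity $p_n(\Sigma_k)-p_n(\Sigma_{k-1})=C(n,k)$. By Corollary~\ref{cor:PPshift}, composing with the bijection $\theta$ identifies $\PPP_n(\Sigma_k)$ with $\{\tau\in\C_n:\des(\tau)\le k-1\}$, whose cardinality is $\sum_{i=1}^{k}C(n,i)$. Subtracting the corresponding expression for $\Sigma_{k-1}$ gives $C(n,k)$.

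For the second equality, I would count $|\W_{k,n}|$ in two ways. On the one hand, each $n$-periodic orbit comes from $n$ distinct periodic words, so $|\W_{k,n}|=nL_k(n)$ by Lemma~\ref{lem:periodic orbits}. On the other hand, the surjection $\PPat_{\Sigma_k}\colon\W_{k,n}\to\PP_n(\Sigma_k)$ partitions $\W_{k,n}$ into fibers that, by Lemma~\ref{lem:iff}, are exactly the sets of $\pi$-monotone words induced by the $+^k$-segmentations of $\hp$; the ``furthermore'' clause of Lemma~\ref{lem: s1sn} guarantees that each such word is primitive, and the reconstruction $e_t=|\{i:s_i<t\}|$ (valid by part 1 of Lemma~\ref{lem: ppat}) shows that distinct segmentations give distinct words. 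Thus $|\PPat_{\Sigma_k}^{-1}(\pi)|$ equals the number of $+^k$-segmentations of $\hp$.

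Next I would compute this fiber size in terms of $\des(\hp)$. A $+^k$-segmentation of $\hp$ is a weakly increasing sequence $0=e_0\le e_1\le\dots\le e_{k-1}\le e_k=n$ whose interior values form a multiset $\{e_1,\dots,e_{k-1}\}\subset\{0,1,\dots,n\}$ containing all $\des(\hp)=i-1$ descent positions of $\hp$. After reserving one copy of each required descent position, the remaining $k-i$ entries are free to be any multiset of size $k-i$ drawn from $\{0,1,\dots,n\}$, giving $\binom{(n+1)+(k-i)-1}{k-i}=\binom{n+k-i}{k-i}$ choices. Grouping permutations of $\PP_n(\Sigma_k)$ by their equivalence class $[\pi]$ (whose $n$ members share the same $\hp$, hence the same fiber size) and using the bijection $\theta$ with $\C_n^{+^k}$, I obtain
\begin{equation*}
nL_k(n)=\sum_{[\pi]\in\PPP_n(\Sigma_k)}n\binom{n+k-i_\pi}{k-i_\pi}=n\sum_{i=1}^{k}C(n,i)\binom{n+k-i}{k-i},
\end{equation*}
where $i_\pi=\des(\hp)+1$. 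Since $C(n,1)=0$ for $n\ge 2$, dividing by $n$ and isolating the $i=k$ term yields $C(n,k)=L_k(n)-\sum_{i=2}^{k-1}\binom{n+k-i}{k-i}C(n,i)$, as required.

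The main obstacle is the bookkeeping for the segmentation count: I need to verify that distinct segmentations produce distinct $\pi$-monotone words (so that the fiber size genuinely equals the segmentation count, not merely an upper bound), and to correctly phrase the ``multiset containing the forced descent positions'' condition so that the stars-and-bars enumeration applies cleanly. Once these two points are settled, the double-count identity is mechanical.
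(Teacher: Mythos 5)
Your proposal is correct and follows essentially the same route as the paper: the first equality via Corollary~\ref{cor:PPshift} and the bijection $\theta$, and the second by double-counting periodic words using Lemma~\ref{lem:iff} to identify each fiber of $\PPat_{\Sigma_k}$ with the set of $+^k$-segmentations of $\hp$, counted by stars and bars as $\binom{n+k-i}{k-i}$. The only cosmetic difference is that you count $\W_{k,n}$ (of size $nL_k(n)$) rather than the orbit set $\WW_{k,n}$, which merely introduces a factor of $n$ on both sides.
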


\begin{proof}
By Corollary~\ref{cor:PPshift}, $\pi\in\PP_n(\Sigma_k)\setminus\PP_n(\Sigma_{k-1})$ if and only if $\des(\hp)=k-1$, and so $\theta$ is a bijection between $\PPP_n(\Sigma_k)\setminus\PPP_n(\Sigma_{k-1})$
and $n$-cycles with $k-1$ descents, from where $C(n,k)=p_n(\Sigma_k)-p_n(\Sigma_{k-1})$.

To prove the recursive formula for $C(n,k)$, we find the cardinality of $\WW_{k,n}$ in two ways. On one hand, this number equals $L_k(n)$ by Lemma~\ref{lem:periodic orbits}.
On the other hand, consider the map $\PPPat_{\Sigma_k}:\WW_{k,n}\to\PPP_n(\Sigma_k)$, which is surjective, but in general not injective. We can obtain $|\WW_{k,n}|$ by adding the cardinalities of the preimages of the elements of $\PPP_n(\Sigma_k)$ under this map.

For fixed $\pi\in\PP_n(\Sigma_k)$, let us count how many words $s\in\W_{k,n}$ satisfy $\PPat_{\Sigma_k}(s)=\pi$ (equivalently, how many orbits in $\WW_{k,n}$ are mapped to $[\pi]$ by $\PPPat_{\Sigma_k}$). By Lemma~\ref{lem:iff},
this number is equal to the number of $+^k$-segmentations of $\hp$.
If $\des(\hp)=i-1$, it is a simple exercise to show that there are $\binom{n+k-i}{k-i}$ such segmentations $0=e_0\le e_1\le \dots \le e_k=n$, since $\Des(\hp)$ has to be a subset of $\{e_1,\dots,e_{k-1}\}$.

By Corollary~\ref{cor:PPshift}, for each $2\le i\le k$, the number of equivalence classes $[\pi]\in\PPP_n(\Sigma_k)$ where $\des(\hp)=i-1$ is $C(n,i)$.
It follows that $$L_k(n)=\sum_{i=2}^k \binom{n+k-i}{k-i} C(n,i),$$
which is equivalent to the stated formula.
\end{proof}

It is clear from Theorem~\ref{thm: enum shift} that for $n\ge2$,
$$p_n(\Sigma_k)= \sum_{i=2}^k C(n,i).$$
Let us show an example that illustrates how, in the above proof, the words $s\in\W_{k,n}$ with $\PPat_{\Sigma_k}(s)=\pi$ are constructed for given $\pi$.
Let $k=5$, and let $\pi = 165398427\in\PP_9(\Sigma_5)$. Then $\hp = 679235148$, which has descent set $\Des(\hp)= \{3, 6\}$.
The $+^5$-segmentation with $e_1=3$, $e_2=6$, $e_3=e_4=9$ induces the $\pi$-monotone word $s_1\dots s_9=011022102$. The $+^5$-segmentation with $e_1=2$, $e_2=3$, $e_3=6$, $e_4=7$ induces $s_1\dots s_9 = 022144203$.

We conclude by mentioning that the second equality in Theorem \ref{thm: enum shift} also follows from a result of Gessel and Reutenauer \cite[Theorem 6.1]{GesReut}, which is proved using quasi-symmetric functions.

\subsection{The reverse $k$-shift, when $n \neq 2 \mbox{ mod } 4$}
\label{sec:reverse}

The reverse $k$-shift is the map $\Sigma_\sigma$ where $\sigma =-^k$. We denote this map by $\rs_k$ in this section.  The third picture in Figure~\ref{fig:maps} shows the graph of a map on the unit interval that is order-isomorphic to $\rs_4$.
Denote the ascent set of $\pi\in\S_n$ by $\Asc(\pi)=\{i\in[n-1]:\pi_i<\pi_{i+1}\}$, and by $\asc(\pi)=|\Asc(\pi)|=n-1-\des(\pi)$ the number of ascents of $\pi$.

Proposition \ref{prop: description reverse} gives a partial characterization of the periodic patterns of $\rs_k$.
For patterns of length $n \neq 2\bmod 4$, it states that $\pi\in\PP_n(\rs_k)$ if and only if $\hp$ can be written as a concatenation of $k$ decreasing sequences.
The next corollary follows from this description.
The case $n = 2 \bmod 4$ will be discussed in Section \ref{sec:reverse2odd}.

\begin{corollary}\label{cor: rev shift}
Let $\pi\in\S_n$, where $n \neq 2 \bmod 4$. Then $\pi\in\PP(\rs_k)$ if and only if $\asc(\hp)\le k-1$.
\end{corollary}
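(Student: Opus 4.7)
The plan is to invoke Proposition~\ref{prop: description reverse}, which under the hypothesis $n \neq 2 \bmod 4$ asserts that $\pi \in \PP_n(\rs_k)$ if and only if $\hp \in \C_n^{-^k}$. It will therefore suffice to show that a permutation $\tau \in \S_n$ belongs to $\S^{-^k}$ if and only if $\asc(\tau) \le k-1$; applying this equivalence with $\tau = \hp$ will yield the corollary.

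For the forward direction, I would assume that $\hp$ admits a $-^k$-segmentation $0 = e_0 \le e_1 \le \dots \le e_k = n$. By definition of $\S^{-^k}$, each block $\hp_{e_t+1}\dots\hp_{e_{t+1}}$ is decreasing, so any ascent position of $\hp$ must lie in the set $\{e_1,\dots,e_{k-1}\}$ of block boundaries. Hence $\asc(\hp) \le k-1$.

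For the backward direction, I would let $a_1 < a_2 < \dots < a_j$ denote the ascent positions of $\hp$, where $j = \asc(\hp) \le k-1$. Setting $e_0 := 0$, $e_t := a_t$ for $1 \le t \le j$, and $e_t := n$ for $j < t \le k$, produces a weakly increasing sequence $0 = e_0 \le e_1 \le \dots \le e_k = n$ whose induced blocks are the maximal decreasing runs of $\hp$ (with extra empty blocks padded at the end). Each block is trivially decreasing, so this is a valid $-^k$-segmentation of $\hp$, witnessing $\hp \in \C_n^{-^k}$. Proposition~\ref{prop: description reverse} then finishes the argument.

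Since the equivalence between expressibility as a concatenation of $k$ decreasing runs and having at most $k-1$ ascents is elementary, there is no real obstacle; the substantive content of the corollary is already Proposition~\ref{prop: description reverse}, and the corollary merely reformulates the condition $\hp \in \C_n^{-^k}$ in terms of the more familiar statistic $\asc(\hp)$.
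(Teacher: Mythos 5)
Your proposal is correct and matches the paper's own (very brief) justification: the paper simply observes that the corollary follows from Proposition~\ref{prop: description reverse} together with the elementary equivalence between admitting a $-^k$-segmentation and having at most $k-1$ ascents, which is exactly the equivalence you spell out. No issues.
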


To enumerate periodic patterns of $\rs_k$ of length $n \neq  2 \bmod 4$, we use an argument very similar to the one we used for $\Sigma_k$.
For $1\le i\le n$, let $$C'(n,i)=|\{\tau\in\C_n:\asc(\tau)=i-1\}|.$$ By definition, we have $C'(n,i) = C(n,n-i+1)$.
The following result is analogous to Theorem~\ref{thm:enum2shift} for the reverse binary shift.

\begin{theorem}\label{thm:enum2rs}
For $n\geq 3$ with $n \neq 2 \bmod 4$, $$p_n(\rs_2) = C'(n,2) = L_2(n).$$
\end{theorem}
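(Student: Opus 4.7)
The strategy mirrors the proof of Theorem~\ref{thm:enum2shift}, with ascents playing the role of descents and Proposition~\ref{prop: description reverse} taking the place of Theorem~\ref{thm: description of periodic patterns}. I plan to prove the two equalities $p_n(\rs_2)=C'(n,2)$ and $p_n(\rs_2)=L_2(n)$ independently.

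For the first equality, since $n\ne 2\bmod 4$, Proposition~\ref{prop: description reverse} tells us that $\theta$ restricts to a bijection between $\PPP_n(\rs_2)$ and $\C_n^{--}$, and Corollary~\ref{cor: rev shift} identifies $\C_n^{--}$ with the set of $n$-cycles satisfying $\asc(\hp)\le 1$. For $n\ge 3$, the only $\tau\in\S_n$ with $\asc(\tau)=0$ is the decreasing permutation $n(n-1)\cdots 1$, whose cycle structure is a product of disjoint transpositions (with an additional fixed point when $n$ is odd), and hence never a single $n$-cycle. Therefore $\C_n^{--}=\{\tau\in\C_n:\asc(\tau)=1\}$, which has cardinality $C'(n,2)$.

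For the second equality, I would argue that $\PPat_{\rs_2}\colon \W_{2,n}\to \PP_n(\rs_2)$ is a bijection; this passes to a bijection $\PPPat_{\rs_2}\colon \WW_{2,n}\to \PPP_n(\rs_2)$, and Lemma~\ref{lem:periodic orbits} then yields $p_n(\rs_2)=|\WW_{2,n}|=L_2(n)$. Surjectivity is immediate, so the content is injectivity. Fix $\pi\in\PP_n(\rs_2)$, so $\asc(\hp)=1$ by the previous step. By Lemma~\ref{lem:iff}, any preimage $s=(s_1\cdots s_n)^\infty\in\W_{2,n}$ has $s_1\cdots s_n$ equal to the $\pi$-monotone word induced by some $-^2$-segmentation $0=e_0\le e_1\le e_2=n$ of $\hp$, and such a segmentation forces both $\hp_1\cdots\hp_{e_1}$ and $\hp_{e_1+1}\cdots\hp_n$ to be decreasing. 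Thus $e_1$ must coincide with the unique ascent position of $\hp$, so the segmentation, and hence $s$, is uniquely determined.

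The only subtle point, and the closest thing to an obstacle, is verifying that this unique $-^2$-segmentation actually induces a \emph{primitive} $\pi$-monotone word, so that the candidate $s$ really lies in $\W_{2,n}$ and Lemma~\ref{lem:iff} is applicable. This is covered by the final assertion of Lemma~\ref{lem: s1sn}, which guarantees that when $\sigma=-^k$ and $n\ne 2\bmod 4$, every $-^k$-segmentation of $\hp$ induces a primitive word — precisely the hypothesis of the theorem.
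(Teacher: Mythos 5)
Your proposal is correct and follows essentially the same route as the paper: identify $\C_n^{--}$ with the $n$-cycles having exactly one ascent via Proposition~\ref{prop: description reverse} (ruling out ascent-free cycles for $n\ge3$), then prove injectivity of $\PPat_{\rs_2}$ by noting that Lemma~\ref{lem:iff} forces any preimage to come from the unique $--$-segmentation of $\hp$. Your extra remark on primitivity via Lemma~\ref{lem: s1sn} is a valid (if not strictly necessary for injectivity) point that the paper leaves implicit inside Lemma~\ref{lem:iff}.
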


\begin{proof}
Since $n\geq 3$, there are no permutations in $\C_n$ with no ascents. By Theorem~\ref{thm: description of periodic patterns}, the map $\theta$ gives a bijection between $\PPP(\rs_2)$ and $\C_n^{--}=\{\tau\in\C_n:\asc(\tau)=1\}$, so $p_n(\rs_2)=C'(n,2)$.

Next we show that the map $\PPat_{\rs_2}:\W_{2,n}\to\PP_n(\rs_2)$ is a bijection, and therefore so is the map
$\PPPat_{\rs_2}:\WW_{2,n}\to\PPP_n(\rs_2)$, which implies that $p_n(\rs_2)=L_2(n)$ by Lemma~\ref{lem:periodic orbits}.
As in the proof of Theorem~\ref{thm:enum2shift}, it is enough to show that $\PPat_{\rs_2}$ is injective.
Let $\pi\in\PP_n(\rs_2)$, and let $s=(s_1\dots s_n)^\infty\in\W_{2,n}$ be such that $\PPat_{\rs_2}(s) = \pi$.
By Lemma~\ref{lem:iff}, $s_1\dots s_n$ is the $\pi$-monotone word induced by some $--$-segmentation of $\hp$. Since $\hp$ has one ascent, there is only one such segmentation, so $s$ is uniquely determined.
\end{proof}

\begin{theorem}\label{thm:enumrs}
For $n\geq 3$ with $n \neq 2 \bmod 4$ and $k\ge3$, $$p_n(\rs_k)-p_n(\rs_{k-1})= C'(n,k) = L_k(n) - \sum_{i = 2}^{k-1} \binom{n+k-i}{k-i}C'(n,i).$$
\end{theorem}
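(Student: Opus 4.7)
The plan is to mirror the proof of Theorem~\ref{thm: enum shift}, replacing descents by ascents and $+^k$-segmentations by $-^k$-segmentations; Corollary~\ref{cor: rev shift} and Lemma~\ref{lem:iff} (both of which require the assumption $n\neq 2\bmod 4$) give us exactly what we need to carry this program out.

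First I would establish the first equality. By Corollary~\ref{cor: rev shift}, a permutation $\pi\in\S_n$ lies in $\PP_n(\rs_k)\setminus\PP_n(\rs_{k-1})$ if and only if $\asc(\hp)=k-1$. Combined with Theorem~\ref{thm: description of periodic patterns}/Proposition~\ref{prop: description reverse}, the bijection $\theta$ restricts to a bijection between $\PPP_n(\rs_k)\setminus\PPP_n(\rs_{k-1})$ and the set of $n$-cycles with exactly $k-1$ ascents. This yields $p_n(\rs_k)-p_n(\rs_{k-1})=C'(n,k)$.

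For the recursive formula, I would count $|\WW_{k,n}|$ in two different ways. Lemma~\ref{lem:periodic orbits} gives $|\WW_{k,n}|=L_k(n)$ directly. On the other hand, the surjection $\PPPat_{\rs_k}\colon \WW_{k,n}\to \PPP_n(\rs_k)$ allows us to compute $|\WW_{k,n}|$ by summing the sizes of the fibers. Fix $[\pi]\in\PPP_n(\rs_k)$ and write $i-1=\asc(\hp)$. By Lemma~\ref{lem:iff}, the fiber over $[\pi]$ is in bijection with the set of $-^k$-segmentations $0=e_0\le e_1\le\cdots\le e_k=n$ of $\hp$. Because each segment $\hp_{e_t+1}\dots\hp_{e_{t+1}}$ must be decreasing, the set of ascent positions $\Asc(\hp)$ must be contained in $\{e_1,\dots,e_{k-1}\}$; conversely, any sequence $0=e_0\le e_1\le\cdots\le e_k=n$ whose entries contain $\Asc(\hp)$ is a valid $-^k$-segmentation. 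So I must count weakly increasing sequences $e_1\le\cdots\le e_{k-1}$ in $\{0,1,\dots,n\}$ containing a fixed $(i-1)$-element set; equivalently, the number of ways to distribute the remaining $k-i$ free values into the $i$ slots determined by the ascent positions, which is $\binom{n+k-i}{k-i}$ by a standard stars-and-bars argument (matching the count used in the proof of Theorem~\ref{thm: enum shift}).

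Grouping fibers by the value of $i=\asc(\hp)+1$ and using that the number of $[\pi]\in\PPP_n(\rs_k)$ with $\asc(\hp)=i-1$ is $C'(n,i)$ (by the first paragraph, applied at each level), I obtain
\[
L_k(n)=\sum_{i=2}^{k}\binom{n+k-i}{k-i}\,C'(n,i).
\]
Isolating the $i=k$ term (where $\binom{n}{0}=1$) and solving for $C'(n,k)$ yields the stated formula. The main subtle point, rather than an obstacle, is simply to verify that Lemma~\ref{lem:iff} applies under our hypothesis $n\neq 2\bmod 4$, which is exactly what rules out the pathological case from Lemma~\ref{lem: s1sn} and guarantees that distinct $-^k$-segmentations of $\hp$ produce distinct primitive $\pi$-monotone words (and hence distinct orbits in $\WW_{k,n}$).
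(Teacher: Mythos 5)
Your proposal is correct and follows essentially the same route as the paper's own proof: the first equality via Corollary~\ref{cor: rev shift} and the bijection $\theta$, and the second by double-counting $|\WW_{k,n}|$ using Lemma~\ref{lem:periodic orbits} on one side and the fibers of $\PPPat_{\rs_k}$ (sized via Lemma~\ref{lem:iff} as the number of $-^k$-segmentations, $\binom{n+k-i}{k-i}$ of them when $\asc(\hp)=i-1$) on the other. The paper itself presents this as a direct analogue of Theorem~\ref{thm: enum shift}, exactly as you do.
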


\begin{proof}
This proof is analogous to that of Theorem~\ref{thm: enum shift}.
By Corollary~\ref{cor: rev shift}, $\pi\in\PP_n(\rs_k)\setminus\PP_n(\rs_{k-1})$ if and only if $\asc(\hp)=k-1$, and so $\theta$ is a bijection between $\PPP_n(\rs_k)\setminus\PPP_n(\rs_{k-1})$
and cyclic permutations with $k-1$ ascents, from where $C'(n,k)=p_n(\rs_k)-p_n(\rs_{k-1})$.

For the second equality of the statement,  we find the cardinality of $\WW_{k,n}$ in two ways. By Lemma~\ref{lem:periodic orbits}, this number equals $L_k(n)$.
On the other hand, since the map $\PPPat_{\rs_k}:\WW_{k,n}\to\PPP_n(\rs_k)$ is surjective, but in general not injective, we can obtain $|\WW_{k,n}|$ by adding the cardinalities of the preimages of the elements of $\PPP_n(\rs_k)$ under this map.

For fixed $\pi\in\PP_n(\rs_k)$, we count how many words $s\in\W_{k,n}$ satisfy $\PPat_{\rs_k}(s)=\pi$ (equivalently, how many orbits in $\WW_{k,n}$ are mapped to $[\pi]$ by $\PPPat_{\rs_k}$).
By Lemma~\ref{lem:iff}, this number is equal to the number of $-^k$-segmentations of $\hp$.
If $\asc(\hp)=i-1$, there are $\binom{n+k-i}{k-i}$ such segmentations $0=e_0\le e_1\le \dots \le e_k=n$, since $\Asc(\hp)$ has to be a subset of $\{e_1,\dots,e_{k-1}\}$.

By Corollary~\ref{cor: rev shift}, for each $2\le i\le k$, the number of equivalence classes $[\pi]\in\PPP_n(\rs_k)$ where $\asc(\hp)=i-1$ is $C'(n,i)$.
It follows that $$L_k(n)=\sum_{i=2}^k \binom{n+k-i}{k-i} C'(n,i),$$
which is equivalent to the stated formula.
\end{proof}

Combining Theorems \ref{thm:enum2shift}, \ref{thm: enum shift}, \ref{thm:enum2rs} and~\ref{thm:enumrs}, we obtain the following.

\begin{corollary}\label{cor:CC'}
For $n \neq 2 \bmod 4$ and $2\le k\le n$, $$C(n,k) = C'(n,k).$$
\end{corollary}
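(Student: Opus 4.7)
The plan is to prove the corollary by straightforward induction on $k$, using the fact that both quantities $C(n,k)$ and $C'(n,k)$ have already been shown to satisfy the same recursion with the same base case, provided $n \not\equiv 2 \bmod 4$.

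First, I would note the base case $k=2$. Since $n \neq 2 \bmod 4$ and $k \le n$ forces $n \ge 3$, Theorem \ref{thm:enum2shift} gives $C(n,2) = L_2(n)$ and Theorem \ref{thm:enum2rs} gives $C'(n,2) = L_2(n)$, so $C(n,2) = C'(n,2)$.

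For the inductive step, assume $C(n,i) = C'(n,i)$ for all $2 \le i \le k-1$. Theorem \ref{thm: enum shift} yields
\[
C(n,k) = L_k(n) - \sum_{i=2}^{k-1} \binom{n+k-i}{k-i} C(n,i),
\]
while Theorem \ref{thm:enumrs}, which is applicable precisely because $n \neq 2 \bmod 4$, gives
\[
C'(n,k) = L_k(n) - \sum_{i=2}^{k-1} \binom{n+k-i}{k-i} C'(n,i).
\]
Since the two right-hand sides have identical first terms and, by the inductive hypothesis, identical sums, we conclude $C(n,k) = C'(n,k)$.

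There is no real obstacle: the whole content of the corollary is that the analogous enumeration arguments carried out for $\Sigma_k$ and $\rs_k$ produce the same recursion, and the restriction $n \neq 2 \bmod 4$ is inherited from Theorem \ref{thm:enumrs}, whose proof relies on Proposition \ref{prop: description reverse} being a genuine characterization in that residue class. It is worth remarking afterwards that a more conceptual (bijective) proof of $C(n,k) = C'(n,k)$ — equivalently, that $n$-cycles with $k-1$ descents are equinumerous with $n$-cycles with $k-1$ ascents — would be desirable, since the complementation map $\tau \mapsto \tau^c$ sends $\C_n$ to $\C_n$ only when $n$ is odd, and the identity fails in general when $n \equiv 2 \bmod 4$.
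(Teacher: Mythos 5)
Your proof is correct and is essentially the paper's own argument: the paper derives the corollary by "combining Theorems \ref{thm:enum2shift}, \ref{thm: enum shift}, \ref{thm:enum2rs} and \ref{thm:enumrs}," which is exactly the induction on $k$ you spell out (common base case $L_2(n)$, identical recursions for $k\ge 3$). Your closing remark about the desirability of a bijective proof also matches the paper's subsequent discussion of the Gessel--Reutenauer result and the map $\delta$.
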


This equality is equivalent to the symmetry $C(n,k)=C(n,n-1-k)$, which is not obvious from the recursive formula in Theorem \ref{thm: enum shift}.
Corollary~\ref{cor:CC'} also follows from a more general result of Gessel and Reutenauer \cite[Theorem 4.1]{GesReut},
which states that if $n \neq 2 \bmod 4$, then for any $D\subseteq[n-1]$,
\begin{equation}\label{eq:GR}|\{\tau\in\C_n:\Des(\tau)=D\}|= |\{\tau\in\C_n:\Asc(\tau)=D\}|.\end{equation}
The proof in \cite{GesReut} involves quasisymmetric functions. Even though we do not know of a direct bijection proving Eq.~\eqref{eq:GR}, our construction can be used to give the following bijection between
$\{\tau\in\C_n:\Des(\tau)\subseteq D\}$ and $\{\tau\in\C_n:\Asc(\tau)\subseteq D\}$.

Given $\tau\in\C_n$ such that $\Des(\tau)\subseteq D= \{d_1, d_2,\dots , d_{k-1}\}$ (where $d_1<\dots<d_{k-1}$), let $\pi\in\S_n$ be such that $\hp=\tau$.
Let $s=(s_1\dots s_n)^\infty\in\W_{k,n}$ be defined by $s_i=t$ if $d_t< \pi_i\leq d_{t+1}$, for $1\le i\le n$, where we let $d_0 = 0$ and $d_k = n$ (in our terminology, $s_1s_2\dots s_n$ is the $\pi$-monotone word induced by the $+^k$-segmentation $0,d_1,\dots,d_{k-1},n$ of $\hp$).
Let $\pi'=\PPat_{\rs_k}(s)$, and define $\delta(\tau)=\widehat{\pi'}=\theta([\pi'])$.

\begin{proposition}
When $n\neq2\bmod4$, the map $\delta$ defined above is a bijection between $\{\tau\in\C_n:\Des(\tau)\subseteq D\}$ and $\{\tau\in\C_n:\Asc(\tau)\subseteq D\}$.
\end{proposition}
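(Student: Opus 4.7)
The plan is to exhibit an explicit two-sided inverse of $\delta$ that mirrors its construction but with the roles of $\Sigma_k$ and $\rs_k$ interchanged. Given $\tau'\in\C_n$ with $\Asc(\tau')\subseteq D$, pick any $\pi'\in\S_n$ with $\widehat{\pi'}=\tau'$. Since $\Asc(\widehat{\pi'})\subseteq D$, the sequence $0,d_1,\dots,d_{k-1},n$ is a $-^k$-segmentation of $\widehat{\pi'}$. Let $s'_1\dots s'_n$ be the $\pi'$-monotone word induced by it, put $s'=(s'_1\dots s'_n)^\infty$, and define $\delta^{-1}(\tau')=\widehat{\PPat_{\Sigma_k}(s')}$.

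The first step is to verify that both $\delta$ and $\delta^{-1}$ are well defined and land in cyclic permutations. Choosing a different representative $\pi$ with $\hp=\tau$ replaces $s_1\dots s_n$ by a cyclic rotation, which produces a cyclic rotation of $\PPat_{\rs_k}(s)$ and hence the same $\widehat{\pi'}$. To guarantee that $\pi$ (and $\pi'$) are actual $n$-cycles one needs the induced $\pi$-monotone word to be primitive; this is exactly the \emph{furthermore} clause of Lemma~\ref{lem: s1sn} applied with $\sigma=+^k$ for $\delta$ (no parity hypothesis required) and with $\sigma=-^k$ for $\delta^{-1}$, which is precisely where the assumption $n\neq 2\bmod 4$ is used. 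The image containment is then immediate from part~3 of Lemma~\ref{lem: ppat}: applied to $\pi'=\PPat_{\rs_k}(s)$ it identifies $(0,d_1,\dots,d_{k-1},n)$ as a $-^k$-segmentation of $\widehat{\pi'}$, so $\Asc(\widehat{\pi'})\subseteq D$; the symmetric argument with $\sigma=+^k$ controls $\delta^{-1}$.

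The heart of the proof is to check $\delta^{-1}\circ\delta=\mathrm{id}$, the other composition being entirely symmetric. Starting with $\tau=\hp$, $\Des(\tau)\subseteq D$, construct $s$ as in the definition of $\delta$ and let $\pi'=\PPat_{\rs_k}(s)$. Applying part~1 of Lemma~\ref{lem: ppat} to $\pi'$ (under $\rs_k$) shows that $s_1\dots s_n$ is the $\pi'$-monotone word induced by the segmentation $d'_t:=|\{i\in[n]:s_i<t\}|$. But by the very construction of $s$ from $\pi$, $|\{i:s_i<t\}|$ equals the number of $i$ with $\pi_i\le d_t$, namely $d_t$; hence $d'_t=d_t$ for every $t$. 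Consequently the word produced when $\delta^{-1}$ is applied to $\widehat{\pi'}$, using the $-^k$-segmentation $(0,d_1,\dots,d_{k-1},n)$ of $\widehat{\pi'}$, is the \emph{same} word $s$. Finally, since $\hp\in\C^{+^k}$, the same Lemma~\ref{lem: s1sn} (with $\sigma=+^k$) guarantees $\PPat_{\Sigma_k}(s)=\pi$, and therefore $\delta^{-1}(\delta(\tau))=\hp=\tau$.

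The main obstacle will be keeping the two segmentation identifications aligned: the $+^k$-segmentation of $\hp$ prescribed by $D$ on one side, and the $-^k$-segmentation of $\widehat{\pi'}$ naturally extracted from $s$ via Lemma~\ref{lem: ppat}(1) on the other. Once this coincidence is verified, the rest is bookkeeping and an appeal to the already-proved correspondence between permutations, segmentations, and induced monotone words encapsulated in Lemmas~\ref{lem:iff} and~\ref{lem: s1sn}.
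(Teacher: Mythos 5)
Your proof is correct and follows essentially the same route as the paper: both arguments rest on the correspondence, provided by Lemmas~\ref{lem: ppat}, \ref{lem:iff} and~\ref{lem: s1sn}, between permutations whose associated cycle has descent (resp.\ ascent) set contained in $D$ and the words in $\W_{k,n}$ whose letter multiplicities are prescribed by $D$, with the hypothesis $n\neq2\bmod4$ entering only to guarantee primitivity of the word induced by a $-^k$-segmentation. The paper packages $\delta$ as a composition of two bijections through this set of words, whereas you package the same content as an explicit two-sided inverse together with a composition check; the two formulations are interchangeable.
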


\begin{proof}
Let $D= \{d_1, d_2,\dots , d_{k-1}\}$, where $0<d_1<\dots<d_{k-1}<n$, and let $\W_{k,n}^D$ be the set of words $w=(w_1\dots w_n)^\infty\in \W_{k,n}$ satisfying $|\{i\in[n]:w_i<t\}|=d_t$ for all $t$.

First note that the map $\delta$ is well defined, in the sense that $\delta(\tau)$ does not depend on the choice of $\pi\in\S_n$ with $\hp=\tau$. This is because any other choice of an element in $[\pi]$ would
produce a word in the same periodic orbit as $s$, and so its image under $\PPat_{\rs_k}$ would still be a permutation in $[\pi']$. The map $\delta$
can thus be viewed as a composition $\delta = \theta \circ \delta' \circ \theta^{-1}$, where $\delta'$ maps $[\pi]$ to $[\pi']$, with $\pi'$ as defined above.
To conclude that $\delta$ is a bijection, we will show that the map $\pi\mapsto\pi'$ is a bijection between $\PP_n^D(\Sigma_k):=\{\pi\in\PP_n(\Sigma_k):\Des(\hp)\subseteq D\}$ and $\PP_n^D(\rs_k):=\{\pi'\in\PP_n(\rs_k):\Asc(\widehat{\pi'})\subseteq D\}$.

Let us first show that the map $\PPat_{\Sigma_k}:\W_{k,n}\to\PP_n(\Sigma_k)$ restricts to a bijection between $\W_{k,n}^D$ and $\PP_n^D(\Sigma_k)$.
Given $w\in\W_{k,n}^D$, it is clear that $\PPat_{\Sigma_k}(w)\in\PP_n^D(\Sigma_k)$ by part 3 of Lemma~\ref{lem: ppat}. To see that the map is surjective, suppose that $\pi\in\PP_n^D(\Sigma_k)$,
and let $s$ be as defined above (that is, $s_1s_2\dots s_n$ is the $\pi$-monotone word induced by the $+^k$-segmentation $0,d_1,\dots,d_{k-1},n$ of $\hp$). Then $s\in\W_{k,n}^D$ and $\PPat_{\Sigma_k}(s)=\pi$ by Lemma~\ref{lem:iff}. Besides, since this is the only segmentation of $\hp$ whose induced $\pi$-monotone word is in $\W_{k,n}^D$, Lemma~\ref{lem:iff} implies that $s$ is unique, and so the map is injective.

Similarly, the map $\PPat_{\rs_k}:\W_{k,n}\to\PP_n(\rs_k)$ restricts to a bijection between $\W_{k,n}^D$ and $\PP_n^D(\rs_k)$.
Given $w\in\W_{k,n}^D$, part 3 of Lemma~\ref{lem: ppat} shows that $\PPat_{\rs_k}(w)\in\PP_n^D(\rs_k)$. To see that the map is surjective, suppose that $\pi'\in\PP_n^D(\rs_k)$, and let
$s=(s_1s_2\dots s_n)^\infty$, where $s_1s_2\dots s_n$ is the $\pi'$-monotone word induced by the $-^k$-segmentation $0,d_1,\dots,d_{k-1},n$ of $\widehat{\pi'}$. Then $s\in\W_{k,n}^D$ and $\PPat_{rs_k}(s)=\pi'$ by Lemma~\ref{lem:iff}. Since this is the only segmentation of $\widehat{\pi'}$ whose induced $\pi'$-monotone word is in $\W_{k,n}^D$, the word $s$ is unique, and so the map is injective.

Finally, the map $\pi\mapsto\pi'$ is a bijection because it is the composition of the above two bijections, as shown in this diagram:
$$\begin{array}{ccccc}
\PP_n^D(\Sigma_k)&\overset{\PPat_{\Sigma_k}}{\longleftarrow}& \W_{k,n}^D&\overset{\PPat_{\rs_k}}{\longrightarrow}&\PP_n^D(\rs_k) \\
\pi&\mapsto&s&\mapsto&\pi'.
\end{array}$$
\end{proof}

Let us see an example of the bijection $\delta$ for $k = 3$ and $n = 9$. Let $D = \{3, 7\}$, and suppose that $\tau = 245378916$. Then we get $$\pi = 124357968 \mapsto s = (001011212)^\infty \mapsto \pi' = 317265849 \mapsto \delta(\tau)=\widehat{\pi'} = 761985243.$$ In this case $\Des(\tau) = \Asc(\delta(\tau))$, but this is not the case in general.

Another consequence of Theorem~\ref{thm:enumrs} is that, when $n \neq2\bmod 4$, $$p_n(\rs_k)=\sum_{i = 2}^k C'(n,k)=p_n(\Sigma_k).$$

\subsection{The reverse $k$-shift, when $n = 2 \mbox{ mod } 4$}\label{sec:reverse2odd} 

The results in Section~\ref{sec:reverse} do not apply to the case $n = 2 \bmod 4$.
Corollary \ref{cor: rev shift} fails in that there are certain permutations $\pi\in\S_n$ with $\asc(\hp)\le k-1$ that are not periodic patterns of $\rs_k$.
In other words, the map $\theta:\PPP_n(\rs_k)\to\C_n^{-^k}$ is not surjective. For $k=2$, the following result is the analogue of Theorem~\ref{thm:enum2rs}.

\begin{theorem} \label{thm:enum2rs2mod4}
For $n\geq 3$ with $n = 2\bmod 4$,
$$p_n(\rs_2) = C'(n, 2) -C'(\frac{n}{2},2)=L_2(n).$$
\end{theorem}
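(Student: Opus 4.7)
The plan is to prove the two equalities $p_n(\rs_2) = L_2(n)$ and $L_2(n) = C'(n,2) - C'(n/2, 2)$ independently. For the first, I will show that the map $\PPat_{\rs_2}\colon \W_{2, n} \to \PP_n(\rs_2)$ is a bijection, running exactly the same argument as in the proof of Theorem~\ref{thm:enum2rs}. Surjectivity holds by definition. For injectivity, given $\pi \in \PP_n(\rs_2)$, any preimage $s \in \W_{2, n}$ satisfies, by Lemma~\ref{lem: ppat}(3), that $(0, d_1, n)$ is a $--$-segmentation of $\hp$, and because $\hp \in \C_n^{--}$ has a single ascent when $n \geq 3$, the value $d_1$ is forced to equal the ascent position. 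Then Lemma~\ref{lem: ppat}(1) determines $s_1 \dots s_n$ uniquely as the $\pi$-monotone word with $d_1$ zeros. Note that this injectivity argument does not invoke Lemma~\ref{lem: s1sn} and therefore requires no residue condition on $n$. Dividing $|\W_{2, n}| = n L_2(n)$ by the class size $n$ then yields $p_n(\rs_2) = L_2(n)$.

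For the second equality, Proposition~\ref{prop: description reverse} still embeds $\theta\colon \PPP_n(\rs_2) \hookrightarrow \C_n^{--}$, so $C'(n, 2) = p_n(\rs_2) + |\mathcal{B}|$, where $\mathcal{B}$ is the set of \emph{bad} cycles in $\C_n^{--}$ not in the image of $\theta$. It suffices to exhibit a bijection $\mathcal{B} \to \C_{n/2}^{--}$. Revisiting the proof of Lemma~\ref{lem: s1sn}, a cycle $\tau = \hp \in \C_n^{--}$ lies in $\mathcal{B}$ precisely when the $\pi$-monotone word $s_1 \dots s_n$ induced by the unique $--$-segmentation of $\hp$ has the form $q^2$ with $q$ primitive of length $r = n/2$; the residue condition $n \equiv 2 \bmod 4$ enters only to allow $r$ to be odd, which is the single situation left unresolved in that lemma when $\sigma = -^k$.

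Given $\tau \in \mathcal{B}$, the orbit of $q$ in $\WW_{2, r}$ does not depend on the choice of $\pi \in [\pi]$, because a cyclic shift of $\pi$ induces the same cyclic shift of $s$ and hence of $q$. Composing with the bijection $\PPPat_{\rs_2}\colon \WW_{2, r} \to \PPP_r(\rs_2)$ from the first paragraph (applied to $r$, which is odd) and the bijection $\theta\colon \PPP_r(\rs_2) \to \C_r^{--}$ from Theorem~\ref{thm:enum2rs} (also applicable since $r$ is odd) yields a map $\mathcal{B} \to \C_{n/2}^{--}$. I will show this is a bijection by constructing an explicit \emph{doubling} inverse: given $\hat\rho \in \C_r^{--}$, define $\hp \in \S_n$ by $\hp_{2i-1} = 2\hat\rho_i$ and $\hp_{2i} = 2\hat\rho_i - 1$ for $i \in [r]$. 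Three checks complete the argument: (a) $\hp$ is a single $n$-cycle, because iterating from $1$ produces the sequence $1, 2\hat\rho(1), 2\hat\rho^2(1) - 1, 2\hat\rho^3(1), \dots$, which returns to $1$ only after $2r$ steps precisely because $r$ is odd; (b) $\asc(\hp) = \asc(\hat\rho) = 1$, since the new adjacencies $(\hp_{2i-1}, \hp_{2i}) = (2\hat\rho_i, 2\hat\rho_i - 1)$ are forced descents while $(\hp_{2i}, \hp_{2i+1}) = (2\hat\rho_i - 1, 2\hat\rho_{i+1})$ inherits the order of $(\hat\rho_i, \hat\rho_{i+1})$; and (c) tracing $\pi$ from $\hp$ shows that the $\pi$-monotone word is $q^2$ with $q = \PPat_{\rs_2}^{-1}(\rho) \in \W_{2, r}$, so $\hp$ is bad and the two maps are mutual inverses. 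The main technical obstacle will lie in checks (a)--(c), especially in keeping track of the alternating parity pattern produced by iterating $\hp$ and translating it back into the $\pi$-monotone word; once verified, the equality $|\mathcal{B}| = C'(n/2, 2)$ follows, and combining it with $C'(n, 2) = L_2(n) + |\mathcal{B}|$ yields the theorem.
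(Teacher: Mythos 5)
Your first equality ($p_n(\rs_2)=L_2(n)$) and your overall strategy for the second --- identify the bad cycles in $\C_n^{--}$ as exactly those whose induced $\pi$-monotone word is a square $q^2$, and match them with $\C_{n/2}^{--}$ via the interleaving $\hp_{2i-1}=2\hat\rho_i$, $\hp_{2i}=2\hat\rho_i-1$ --- coincide with the paper's proof, and your checks (a)--(c) are all correct and provable. The gap is that (a)--(c) establish only one of the two compositions needed for a bijection: writing $F\colon\mathcal{B}\to\C_{n/2}^{--}$ for your forward map and $D$ for the doubling map, they show that $D$ lands in $\mathcal{B}$ and that $F\circ D=\mathrm{id}$. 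That makes $F$ surjective and yields $|\mathcal{B}|\ge C'(\frac{n}{2},2)$, hence only the inequality $p_n(\rs_2)\le C'(n,2)-C'(\frac{n}{2},2)$. Nothing in the proposal addresses the composition $D\circ F=\mathrm{id}$, i.e., that \emph{every} bad cycle $\tau$ is actually of the interleaved form $\hp_{2i-1}=2\hat\rho_i$, $\hp_{2i}=2\hat\rho_i-1$ with $\hat\rho=F(\tau)$.

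That missing direction is the delicate part, and it is where the paper spends most of its effort. Given a bad $\tau=\hp$ with induced word $q^2$ and $\rho=\PPat_{\rs_2}(q^\infty)$, one must first show $\{\pi_i,\pi_{i+r}\}=\{2\rho_i-1,2\rho_i\}$ for all $i$ (using Lemma~\ref{lem:sisj} together with the fact that $s_{[i,\infty)}$ depends only on $i\bmod r$, so the $r$ pairs are ordered according to $\rho$), and then show via repeated application of Lemma~\ref{lem: Csigma} that the relative order within each pair is forced to alternate with $i$ once the order of $\pi_1$ and $\pi_{1+r}$ is chosen; consistency around the cycle again uses that $r$ is odd, and the two resulting permutations are cyclic rotations of one another by $r$, hence determine the same $\hp$. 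Only after this inductive argument does $\tau=D(\hat\rho)$ follow. You should add this as a fourth check (d); as written, the argument proves an inequality rather than the stated equality.
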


\begin{proof}
To prove that $p_n(\rs_2) = L_2(n)$, we show that the map $\PPat_{\rs_2}:\W_{2,n}\to\PP_n(\rs_2)$ is a bijection, and therefore so is the map $\PPPat_{\rs_2}:\WW_{2,n}\to\PPP_n(\rs_2)$.
As in the proof of Theorem~\ref{thm:enum2shift}, it is enough to show that $\PPat_{\rs_2}$ is injective.
Let $\pi\in\PP_n(\rs_2)$, and note that $\asc(\hp)=1$ by Proposition~\ref{prop: description reverse}, since there are no cycles with no descents of length $n\ge3$.
Suppose that $s=(s_1\dots s_n)^\infty\in\W_{2,n}$ is such that  $\PPat_{\rs_2}(s) = \pi$.
If $d$ is the number of zeros in $s_1\dots s_n$, we have by part 3 of Lemma \ref{lem: ppat} that
$0\le d\le n$ is a $--$-segmentation of $\hp$, and so $\Asc(\hp)=\{d\}$.
It follows by part 1 of Lemma \ref{lem: ppat} the word $s_1\dots s_n$ is uniquely determined.

Next we prove that $p_n(\rs_2) = C'(n, 2) -C'(\frac{n}{2},2).$
Let $r=n/2$.
By Proposition~\ref{prop: description reverse}, the map $\theta$ restricts to an injection from $\PPP_n(\rs_2)$ to $\C_n^{--}=\{\tau\in\C_n:\asc(\tau)=1\}$.
It suffices to show that the cardinality of $\C_n^{--}\setminus \theta(\PPP_n(\rs_2))$, or equivalently, the cardinality of
$B=\theta^{-1}(\C_n^{--})\setminus\PPP_n(\rs_2)$, is $C'(r,2)$.
We do this by constructing a bijection $\overline{\varphi}$ between $B$ and $\PPP_r(\rs_2)$. Using that $\PPP_r(\rs_2)=\{[\rho]\in\SS_r:\asc(\hat\rho)=1\}$ by Proposition~\ref{prop: description reverse},
since $r$ is an odd number, the map $\theta$ gives a bijection from $\PPP_r(\rs_2)$ to $\C_r^{--}$, and so the sequence of bijections in the second row of Figure~\ref{fig:varphi2} proves that
$|B|=p_r(\rs_2)=C'(r,2)$.

\begin{figure}[htb]
\centering
\begin{tikzpicture}
  \matrix (m) [matrix of math nodes,row sep=2.5em,column sep=2em,minimum width=2em]
  {
     & \{\pi \in \S_n : \asc(\pihat) = 1\}\setminus \PP_n(\rs_2) & \PP_r(\rs_2) &  \\
     \C_n^{--}\setminus\theta(\PPP_n(\rs_2)) & B = \{[\pi]\in\SS_n: \asc(\pihat) = 1 \}\setminus \PPP_n(\rs_2) & \PPP_r(\rs_2) & \C_r^{--} \\
     };
  \path[-stealth]
    (m-1-2) edge [dashed] (m-2-2)
            edge node [above] {\footnotesize $\varphi$} (m-1-3)
    (m-1-3) edge [dashed] (m-2-3)
            edge node [above right] {$\rho\mapsto\hat\rho$} (m-2-4)
    (m-2-2) edge node [above] {\footnotesize $\overline{\varphi}$} node [below] {$\sim$} (m-2-3)
    (m-2-3) edge node [above] {$\theta$} node [below] {$\sim$} (m-2-4)
    (m-2-2) edge node [above] {$\theta$} node [below] {$\sim$} (m-2-1);
\end{tikzpicture}
\caption{The maps used in the proof of Theorem~\ref{thm:enum2rs2mod4}. Dashed arrows denote maps sending a permutation to its equivalence class of cyclic rotations. The symbols $\sim$ indicate bijections.}
\label{fig:varphi2}
\end{figure}

Let us first define a map $\varphi$ from $\{\pi\in\S_n:\asc(\hp)=1\}\setminus\PP_n(\rs_2)$ to $\PP_r(\rs_2)$ as follows. Given $\pi$ in the former set, note that the fact that $\asc(\hp)=1$ implies that $\hp$ has a unique $--$-segmentation.
Let $s_1s_2\dots s_n$ be the $\pi$-monotone binary word induced by this segmentation. This word cannot be primitive, because if it were, then Lemma~\ref{lem:sisj} would imply that
$\PPat_{\rs_2}((s_1\dots s_n)^\infty)=\pi$, contradicting that $\pi\notin\PP_n(\rs_2)$. In fact, the proof of Lemma~\ref{lem: s1sn} shows that the only way for this word not to be primitive is if
$s_1\dots s_n=q^2$ for some primitive binary word $q$ of length $r$. Let $\rho=\PPat_{\rs_2}(q^\infty)$, and define $\varphi(\pi)=\rho$.  It is clear from the above construction that cyclic rotations of $\pi$ produce cyclic rotations of the word $s_1s_2\dots s_n$, which in turn lead to cyclic rotations of $q$ and $\rho$.
If $\varphi(\pi)=\rho$, then the $n$ elements of $[\pi]$ are mapped by $\varphi$ to the $r$ elements of $[\rho]$,
so $\varphi$ induces a map $\overline{\varphi}$ from $B=\{[\pi]\in\SS_n:\asc(\hp)=1\}\setminus\PPP_n(\rs_2)$ to $\PPP_r(\rs_2)$.
We will prove that $\varphi$ is a 2-to-1 map, which is equivalent to the fact that $\overline{\varphi}$ is a bijection.

First we prove that $\varphi$ is surjective. Let $\rho\in\PP_r(\rs_2)$. For convenience, define $\rho_{i+r}=\rho_i$ for $1\le i\le r$. Define a permutation $\pi\in\S_n$ by
\begin{equation}\label{eq:defpi}\pi_i = \begin{cases}2\rho_i-1 & \mbox {if $i$ is odd,}\\  2\rho_{i} & \mbox{if $i$ is even.}\end{cases}\end{equation}
Note that this construction produces a permutation because $r$ is odd. We will show that $\asc(\hp)=1$, $\pi\notin\PP_n(\rs_2)$, and $\varphi(\pi)=\rho$.

We claim that for $1\le i\le r$, we have $\hp_{2i-1}=2\hat\rho_i$ and $\hp_{2i}=2\hat\rho_i-1$.
Indeed, letting $1\le j\le r$ be such that $\rho_j=i$, it follows from the definition of $\pi$ that $\hp_{2i-1}=\hp_{2\rho_j-1}=2\rho_{j+1}=2\hat\rho_{\rho_j}=2\hat\rho_i$,
and similarly  $\hp_{2i}=\hp_{2\rho_j}=2\rho_{j+1}-1=2\hat\rho_{\rho_j}-1=2\hat\rho_i-1$.
It is clear from this description of $\hp$ that $\asc(\hp)=\asc(\hat\rho)=1$. Moreover, if $\Asc(\hat\rho)=\{d\}$, then $\Asc(\hp)=\{2d\}$, which determines the unique $--$-segmentation of $\hp$.

Now suppose that there is some $w\in\W_{2,n}$ such that $\pi = \PPat_{\rs_2}(w)$. By part 1 of Lemma \ref{lem: ppat},
$w_1w_2\dots w_n$ is a $\pi$-monotone word having $2d$ zeros. Since by construction $\{\pi_i,\pi_{i+r}\}=\{2\rho_i,2\rho_i-1\}$  for all $1\le i\le r$,
a binary $\pi$-monotone word with an even number of zeros satisfies $w_1w_2\dots w_n=(w_1w_2\dots w_r)^2$, and so it is not primitive, which contradicts that $w\in\W_{2,n}$.
It follows that $\pi\notin\PP_n(\rs_2)$.

To show that $\varphi(\pi)=\rho$, let $s_1s_2\dots s_n$ be the $\pi$-monotone word induced by the unique $--$-segmentation $0\le 2d\le n$ of $\hp$. By definition, $\varphi(\pi)=\PPat_{\rs_2}((s_1\dots s_r)^\infty)$.
On the other hand, $s_1s_2\dots s_r$ is a $\rho$-monotone word with $d$ zeros, and thus induced by the
$--$-segmentation $0\le d\le r$ of $\hat\rho$. Lemma~\ref{lem:iff} implies that $\PPat_{\rs_2}((s_1\dots s_r)^\infty)=\rho$.

It remains to show that every $\rho\in\PP_r(\rs_2)$ has exactly two preimages under~$\varphi$. Let $\pi$ be such that $\varphi(\pi)=\rho$. By construction of $\varphi$, we have $\varphi(\pi_{r+1}\dots\pi_n\pi_1\dots\pi_r)=\rho$ as well.
We will show that there are no other elements in $\{\pi\in\S_n:\asc(\hp)=1\}\setminus\PP_n(\rs_2)$ whose image by $\varphi$ is $\rho$. Let $s\in\W_{2,r}$ be such that $\PPat_{\rs_2}(s)=\rho$, and note that $s$ is unique because $\PPat_{\rs_2}$ is a bijection. Since $\varphi(\pi)=\rho$, we know that $s_1s_2\dots s_n=(s_1s_2\dots s_r)^2$ is the $\pi$-monotone word induced by the unique $--$-segmentation of $\hp$.
Letting $d$ be the number of zeros in $s_1\dots s_r$, this segmentation is $0\le 2d\le n$.
If $1\le i, j\le n$ are such that $s_{[i,\infty)} \neq s_{[j, \infty)}$, then Lemma \ref{lem:sisj} implies that $\pi_i<\pi_j$ if and only if $s_{[i,\infty)}\prec_{\sigma} s_{[j,\infty)}$. It follows that $\rho = \st(\pi_1\dots \pi_r)$ and $\rho = \st(\pi_{r+1}\dots \pi_n)$.
In addition, since $s_{[i,\infty)} = s_{[i+r, \infty)}$ and $s_{[j,\infty)} = s_{[j+r, \infty)}$ for $1\le i,j\le r$, the four inequalities $s_{[i,\infty)}\prec_{\sigma} s_{[j,\infty)}$, $s_{[i+r,\infty)}\prec_{\sigma} s_{[j+r,\infty)}$, $s_{[i+r,\infty)}\prec_{\sigma} s_{[j,\infty)}$ and $s_{[i,\infty)}\prec_{\sigma} s_{[j+r,\infty)}$ are equivalent, and so the four inequalities  $\pi_i<\pi_j$, $\pi_{i+r}<\pi_{j+r}$, $\pi_i<\pi_{j+r}$, $\pi_{i+r}<\pi_j$
must be either all true or all false.
This implies that $\{\pi_i, \pi_{i+r}\}=\{2\rho_i-1, 2\rho_i\}$ for $1\le i\le r$. We claim that once the relative order of $\pi_1$ and $\pi_{1+r}$ is chosen, the rest of the entries of $\pi$ are uniquely determined by induction. Indeed, having determined the relative order of $\pi_i$ and $\pi_{i+r}$, say $2\rho_i-1=\pi_i<\pi_{i+r}=2\rho_i$, then Lemma~\ref{lem: Csigma} implies that $\pi_{i+1}>\pi_{i+r+1}$, so these entries are forced to be $\pi_{i+1}=2\rho_{i+1}$ and $\pi_{i+r+1}=2\rho_{i+1}-1$. Note that Lemma~\ref{lem: Csigma} can be applied because either $\pi_i,\pi_{i+r}\le 2d$ or $\pi_i,\pi_{i+r}>2d$.
\end{proof}

Here is an example to illustrate the above proof of the fact that $\varphi$ is surjective. Suppose that
$\rho=14523\in\PP_5(\rs_2)$.
Then Equation~\eqref{eq:defpi} yields $\pi = 1\,8\,9\,4\,5\,2\,7\,10\,3\,6$. In this case, $\hat{\rho} = 43152$ and $\hp = 8\,7\,6\,5\,2\,1\,10\,9\,4\,3$. It is easy to check that indeed $\varphi(\pi)=\rho$, since the $\pi$-monotone word induced by the $--$-segmentation $0\le 6\le 10$ of $\hp$ is $0110001100$, and $\PPat_{\rs_2}((01100)^\infty)=14523$.

Combining Theorems~\ref{thm:enum2shift}, \ref{thm:enum2rs} and \ref{thm:enum2rs2mod4}, we get the following relationship between cycles with one ascent and cycles with one descent.

\begin{corollary} \label{cor: k=2 relation} For $n\ge3$,
$$C'(n,2) = \begin{cases} C(n, 2) + C(\frac{n}{2},2) & \mbox{if } n = 2 \bmod 4, \\ C(n,2) & \mbox{otherwise.}\end{cases}$$
\end{corollary}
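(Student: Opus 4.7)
The plan is to chain together the three preceding theorems, all of which give closed forms for $p_n(\Sigma_2)$ and $p_n(\rs_2)$ in terms of $L_2(n)$, $C(n,2)$, and $C'(n,2)$. Since $p_n(\Sigma_2)$ and $p_n(\rs_2)$ both equal $L_2(n)$ (independent of $n \bmod 4$), the two sides can be equated after rearrangement, and the result will drop out.

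More concretely, I would split on $n \bmod 4$. If $n \not\equiv 2 \pmod 4$, Theorem~\ref{thm:enum2shift} gives $C(n,2) = L_2(n)$ and Theorem~\ref{thm:enum2rs} gives $C'(n,2) = L_2(n)$, so $C'(n,2) = C(n,2)$ immediately. If $n \equiv 2 \pmod 4$, Theorem~\ref{thm:enum2shift} still gives $C(n,2) = L_2(n)$, while Theorem~\ref{thm:enum2rs2mod4} gives $C'(n,2) - C'(\tfrac{n}{2},2) = L_2(n)$. Subtracting yields
\[
C'(n,2) = C(n,2) + C'(\tfrac{n}{2},2).
\]
It remains to replace $C'(\tfrac{n}{2},2)$ by $C(\tfrac{n}{2},2)$. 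This is legitimate because $n \equiv 2 \pmod 4$ forces $n \ge 6$ and $n/2$ to be odd, so in particular $n/2 \ge 3$ and $n/2 \not\equiv 2 \pmod 4$; the first case of the corollary (already established in the same proof) then applies to $n/2$ and gives $C'(\tfrac{n}{2},2) = C(\tfrac{n}{2},2)$.

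There is no real obstacle here: the only thing to verify carefully is that the side conditions on $n$ in each of Theorems~\ref{thm:enum2shift}, \ref{thm:enum2rs}, and~\ref{thm:enum2rs2mod4} are met (in particular that $n/2 \ge 3$ in the second case, which follows from $n \ge 6$), and that the two cases are invoked in the correct order so that the first case is available when reducing the $n \equiv 2 \pmod 4$ case to the odd subcase.
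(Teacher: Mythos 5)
Your proposal is correct and is essentially the paper's own argument: the corollary is stated there as an immediate combination of Theorems~\ref{thm:enum2shift}, \ref{thm:enum2rs} and~\ref{thm:enum2rs2mod4}, exactly as you chain them, including the reduction of $C'(\tfrac{n}{2},2)$ to $C(\tfrac{n}{2},2)$ via the odd case. The side conditions you check ($n\ge 6$ and $n/2$ odd when $n=2\bmod 4$) are the right ones.
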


Next we extended the argument used to prove Theorem~\ref{thm:enum2rs2mod4} to arbitrary $k$, obtaining a formula for the number of periodic patterns of the reverse $k$-shift in terms of the number of cyclic permutations with a fixed number of ascents.

\begin{theorem}\label{thm:enumrs2mod4}
For $n\geq 3$ with $n = 2\bmod 4$ and $k\ge3$,
\begin{equation}\label{eq:enumrs}p_n(\rs_k) = \sum_{i=2}^k C'(n,i) - C'(\frac{n}{2}, k).\end{equation}
\end{theorem}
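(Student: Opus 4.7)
The plan is to extend the strategy of Theorem~\ref{thm:enum2rs2mod4} from $k=2$ to arbitrary $k\ge3$. Write $r=n/2$, which is odd because $n\equiv 2\bmod 4$. By Proposition~\ref{prop: description reverse}, $\theta$ restricts to an injection $\PPP_n(\rs_k)\hookrightarrow\C_n^{-^k}$, and $|\C_n^{-^k}|=\sum_{i=2}^k C'(n,i)$. Setting $B=\{[\pi]\in\SS_n:\hp\in\C_n^{-^k}\}\setminus\PPP_n(\rs_k)$, we have $p_n(\rs_k)=\sum_{i=2}^kC'(n,i)-|B|$, so it suffices to show $|B|=C'(r,k)$. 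Since $r$ is odd, Theorem~\ref{thm:enumrs} gives $C'(r,k)=p_r(\rs_k)-p_r(\rs_{k-1})=|\PPP_r(\rs_k)\setminus\PPP_r(\rs_{k-1})|$. I will therefore construct a bijection $\overline{\varphi}:B\to \PPP_r(\rs_k)\setminus\PPP_r(\rs_{k-1})$ induced by the ``halving'' map $\varphi(\pi)=\rho$, where $\rho_i=\lceil\pi_i/2\rceil$.

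The heart of the proof is the following characterization: for $\pi\in\S_n$ with $\asc(\hp)\le k-1$, one has $\pi\notin\PP_n(\rs_k)$ if and only if $\{\pi_i,\pi_{i+r}\}=\{2\rho_i-1,2\rho_i\}$ for some $\rho\in\S_r$ with $\asc(\hat\rho)=k-1$ exactly. For the forward direction, the assumption that every $-^k$-segmentation of $\hp$ induces a non-primitive $\pi$-monotone word, combined with the only case left uncovered in the proof of Lemma~\ref{lem: s1sn} (namely $s_1\dots s_n=q^2$ with $|q|=r$), yields a word $s$ with $s_{[i,\infty)}=s_{[i+r,\infty)}$. The argument at the end of the proof of Theorem~\ref{thm:enum2rs2mod4}, which uses Lemma~\ref{lem:sisj} to show that the four comparisons among $\{\pi_i,\pi_{i+r}\}$ and $\{\pi_j,\pi_{j+r}\}$ are equivalent, generalizes verbatim to arbitrary $k$ and forces $\{\pi_i,\pi_{i+r}\}$ to consist of consecutive integers, producing the paired form $\{\pi_i,\pi_{i+r}\}=\{2\rho_i-1,2\rho_i\}$. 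A direct calculation using Lemma~\ref{lem: Csigma} with the all-even segmentation then gives $\hp_{2a-1}=2\hat\rho_a$ and $\hp_{2a}=2\hat\rho_a-1$, so that $\Asc(\hp)=\{2a:a\in\Asc(\hat\rho)\}\subseteq\{2,4,\dots,n-2\}$ and $\asc(\hp)=\asc(\hat\rho)$.

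The new ingredient beyond Theorem~\ref{thm:enum2rs2mod4} is the equality $\asc(\hat\rho)=k-1$, which I plan to establish by contradiction. If $\asc(\hp)<k-1$, then $|\Asc(\hp)|<k-1$, so one can build a $-^k$-segmentation $0=e_0\le e_1\le\dots\le e_k=n$ of $\hp$ containing some odd value $e_t=2a-1$ by placing a ``free'' endpoint at an odd position in $\{1,3,\dots,n-1\}$. Such an odd $e_t$ splits the pair $\{2a-1,2a\}$ across two distinct buckets, so the induced $\pi$-monotone word $s_1\dots s_n$ satisfies $s_j\ne s_{j+r}$ for the index $j$ with $\pi_j\in\{2a-1,2a\}$; by the case analysis in the proof of Lemma~\ref{lem: s1sn} for $\sigma=-^k$ and $n\equiv 2\bmod 4$ (where the only non-primitive possibility has period exactly $r$), this word must be primitive. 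Thus $\pi\in\PP_n(\rs_k)$, contradicting the assumption. Conversely, given $\rho\in\S_r$ with $\asc(\hat\rho)=k-1$, define $\pi$ by Eq.~\eqref{eq:defpi}; the unique $-^k$-segmentation of $\hp$ has all $e_t$ even, hence induces a $q^2$ word, so $\pi\notin\PP_n(\rs_k)$, while $\rho\in\PP_r(\rs_k)\setminus\PP_r(\rs_{k-1})$ by Proposition~\ref{prop: description reverse} applied at length $r$ (valid since $r$ is odd).

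Finally, $\varphi$ is $2$-to-$1$ at the level of permutations: for a given $\rho$ with $\asc(\hat\rho)=k-1$, the two preimages in the source are the $\pi$ of Eq.~\eqref{eq:defpi} and its cyclic shift $\pi_{r+1}\pi_{r+2}\dots\pi_n\pi_1\dots\pi_r$, which correspond to the two choices of whether $\pi_1=2\rho_1$ or $\pi_1=2\rho_1-1$. As in the final paragraph of the proof of Theorem~\ref{thm:enum2rs2mod4}, once the paired form and the initial choice at position~$1$ are fixed, Lemma~\ref{lem: Csigma} applied to the (unique) all-even $-^k$-segmentation propagates the choice uniquely around the cycle, with no inconsistency because $\hat\rho$ being an $r$-cycle with $r$ odd forces $\hp$ to be an $n$-cycle. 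Since $|[\pi]|=n=2r=2|[\rho]|$, this $2$-to-$1$ map descends to a bijection $\overline{\varphi}:B\to \PPP_r(\rs_k)\setminus\PPP_r(\rs_{k-1})$, completing the count $|B|=C'(r,k)$. The main technical obstacle is the necessity of $\asc(\hat\rho)=k-1$ for $\pi$ to be bad; this is where the freedom to insert an odd $e_t$ into a $-^k$-segmentation is essential and does not arise in the $k=2$ case of Theorem~\ref{thm:enum2rs2mod4}, where the segmentation is already unique.
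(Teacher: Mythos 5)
Your proposal is correct and follows essentially the same route as the paper's proof: the injection $\theta:\PPP_n(\rs_k)\hookrightarrow\C_n^{-^k}$, the reduction to showing that the bad set $B$ has size $C'(n/2,k)$ via a $2$-to-$1$ map induced by the $q^2$ structure forced by Lemma~\ref{lem: s1sn}, the four-comparison argument giving the pairing $\{\pi_i,\pi_{i+r}\}=\{2\rho_i-1,2\rho_i\}$, the odd-endpoint segmentation ruling out $\asc(\hp)<k-1$, and the explicit doubling construction for surjectivity are all the paper's steps. The only differences are cosmetic: you define $\varphi$ by halving entries rather than as $\PPat_{\rs_k}(q^\infty)$ (the two agree given the pairing), you phrase the target set as $\PPP_r(\rs_k)\setminus\PPP_r(\rs_{k-1})$ rather than as classes with $\asc(\hat\rho)=k-1$ (the same set since $r$ is odd), and you establish the pairing before, rather than after, the equality $\asc(\hp)=k-1$.
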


\begin{proof}
By Proposition~\ref{prop: description reverse}, the map $\theta$ restricts to an injection from $\PPP_n(\rs_k)$ to $\C_n^{-^k}=\{\tau\in\C_n:\asc(\tau)\leq k-1\}$. Noting that $|\C_n^{-^k}| = \sum_{i=2}^k C'(n,i)$, it follows that
$$
p_n(\rs_k) =  \sum_{i=2}^{k} C'(n,i) - |\C_n^{-^k}\setminus\theta(\PPP_n(\rs_k))|.
$$
Let $r = n/2$. It suffices to show that $|\C_n^{-^k}\setminus\theta(\PPP_n(\rs_k))|$, or equivalently, the cardinality of $B = \theta^{-1}(\C_n^{-^k})\setminus \PPP_n(\rs_k)$, is $C'(r, k)$. We do this by constructing a bijection $\overline{\varphi}$ from $B$ to $\{[\rho] \in \SS_r : \asc(\hat\rho) = k-1\}$. Since $\theta$ clearly gives a bijection between the latter set and the set of cycles in $\C_r$ with $k-1$ ascents, it will follow
that $|B|=C'(r, k)$. A diagram of the maps used in this proof appears in Figure~\ref{fig:varphi}. It is useful to note that $\theta^{-1}(\C_n^{-^k})=\{[\pi]\in\SS_n: \asc(\pihat) \leq k-1\}$.

\begin{figure}[htb]
\centering
\begin{tikzpicture}
  \matrix (m) [matrix of math nodes,row sep=1.4em,column sep=1em,minimum width=2em]
  {
     \{\pi \in \S_n : \asc(\pihat) \leq k{-}1\}\setminus \PP_n(\rs_k) & \{\rho \in \S_r : \asc(\hat\rho) = k{-}1\} &  \\  && \\
     B {=} \{[\pi]\in\SS_n: \asc(\pihat) \leq k{-}1\}\setminus \PPP_n(\rs_k) & \{[\rho] \in \SS_r : \asc(\hat\rho) = k{-}1\} & \{\hat\rho\in\C_r : \asc(\hat\rho)=k{-}1\} \\
      \C_n^{-^k}\setminus\theta(\PPP_n(\rs_k)) &  &  \\
     };
  \path[-stealth]
    (m-1-1) edge [dashed] (m-3-1)
            edge node [above] {\footnotesize $\varphi$} (m-1-2)
    (m-1-2) edge [dashed] (m-3-2)
            edge node [above right] {$\rho\mapsto\hat\rho$} (m-3-3)
    (m-3-1) edge node [above] {\footnotesize $\overline{\varphi}$} node [below] {$\sim$} (m-3-2)
    (m-3-2) edge node [above] {$\theta$} node [below] {$\sim$} (m-3-3)
    (m-3-1) edge node [left] {$\theta$} node [right] {$\wr$} (m-4-1);
\end{tikzpicture}
\caption{The maps used in the proof of Theorem~\ref{thm:enumrs2mod4}.}
\label{fig:varphi}
\end{figure}

We will first define a map $\varphi$ from $\{\pi \in \S_n : \asc(\pihat) \leq k-1\}\setminus \PP_n(\rs_k)$ to $\{\rho \in \S_r : \asc(\hat\rho) = k-1\}$.
We claim that if $\pi$ is in the former set, then $\asc(\pihat) = k-1$.
Indeed, for every $-^k$-segmentation $0=e_0\leq \cdots \leq e_k = n$ of $\hp$, the $\pi$-monotone word $s_1\cdots s_n$ induced by it is not primitive,
because otherwise, by Lemma~\ref{lem:sisj}, it would satisfy
$\PPat_{\rs_k}((s_1\dots s_n)^\infty)=\pi$, contradicting that $\pi\notin\PP_n(\rs_k)$. In fact, this word has to be of the form $s_1\cdots s_n = q^2$, and so all the $e_i$ are even, since each letter appears an even number of times.
But if $\asc(\hp) < k-1$, then $\hp$ would have $-^k$-segmentations without this property, since any sequence $0=e_0\leq \cdots \leq e_k = n$ such that $\Asc(\hp)\subseteq\{e_1,\dots,e_{k-1}\}$ would be a $-^k$-segmentation of $\hp$.

To define the map $\varphi$, let $\pi \in \S_n\setminus \PP_n(\rs_k)$ be such that $\asc(\hp) = k-1$, and let
$s_1\cdots s_n$ be the $\pi$-monotone word induced by the unique $-^k$-segmentation of $\hp$.
By the proof of Lemma~\ref{lem: s1sn}, we have $s_1\cdots s_n = q^2$ for some primitive word $q$ of length $r$.
Let $\rho = \PPat_{\rs_k}(q^\infty)$, and define $\varphi(\pi) = \rho$.

Let us first check that such $\rho$ satisfies $\asc(\hat\rho) = k-1$. Let $s = (s_1\cdots s_n)^\infty = q^\infty$. Denote the $k-1$ ascents of $\hp$ by $e_1,e_2,\dots,e_{k-1}$, in increasing order, so that $0 = e_0< e_1<\cdots<e_k = n$ is the unique $-^k$-segmentation of $\hp$. Note that $e_t$ must be even for all $t$, since each letter appears an even number of times in $q^2$. Letting $e_t=2d_t$,
we will show that $d_t \in \Asc(\hat\rho)$ for $1\le t\le k-1$. Since $k-1$ is an upper bound on $\asc(\hat\rho)$ by Proposition~\ref{prop: description reverse}, it will follow that $\asc(\hat\rho) = k-1$.
Let $i,j$ be such that $\pi_i = e_t$ and $\pi_j = e_t+1$. By construction of $s_1\dots s_n$, we have $s_j=s_i+1$, and so $i \neq j \bmod r$.
Taking indices $\bmod\, n$, we have $\pi_{i+1}=\hp_{\pi_i}=\hp_{e_t}<\hp_{e_t+1}=\hp_{\hp_j}=\pi_{j+1}$. Thus, $s_{[i+1,\infty)}\prec_\sigma s_{[j+1,\infty)}$, because by Lemma~\ref{lem:sisj}, for any $1\le a,b\le n$ such that $a\neq b\bmod r$ and $\pi_a<\pi_b$, we must have $s_{[a,\infty)}\prec_\sigma s_{[b,\infty)}$.
 By construction, the word $s_1s_2\dots s_n=q^2$ has $e_t=2d_t$ entries less than or equal to $s_i$, namely those entries $s_\ell$ for which $\pi_\ell\le\pi_i$. By Lemma~\ref{lem:sisj}, these entries satisfy $s_{[\ell,\infty)}\preceq_\sigma s_{[i,\infty)}$, with equality only when $\ell=i\bmod r$. Thus, letting $i'=i\bmod r$, the word $s_1s_2\dots s_r=q$ has $d_t$ entries $s_\ell$ such that $s_{[\ell,\infty)}\preceq_\sigma s_{[i',\infty)}$, and so $\rho_{i'}=d_t$.
Similarly, $s_1s_2\dots s_n$ has $n-2d_t$ entries $s_\ell$ satisfying $s_{[j,\infty)}\preceq_\sigma s_{[\ell,\infty)}$, with equality only when $\ell=j\bmod r$. Thus, letting $j'=j\bmod r$, the word $q$ has $r-d_t$ entries $s_\ell$ such that $s_{[j',\infty)}\preceq_\sigma s_{[\ell,\infty)}$, and so $\rho_{j'}=d_t+1$.
Since $s_{[i+1,\infty)}\prec_\sigma s_{[j+1,\infty)}$, we have $\rho_{i+1} < \rho_{j+1}$, taking indices $\bmod\, r$. It follows that $\hat\rho_{d_t}=\hat\rho_{\rho_i}=\rho_{i+1}<\rho_{j+1}=\hat\rho_{\rho_j}=\hat\rho_{d_t+1}$, so $d_t \in \Asc(\hat\rho)$ as claimed.

It is clear from the definition of $\varphi$ that cyclic rotations of $\pi$ lead to cyclic rotations of $s_1\cdots s_n$, which in turn lead to cyclic rotations of $\rho$.
If $\varphi(\pi)=\rho$, then the $n$ elements of $[\pi]$ are mapped by $\varphi$ to the $r$ elements of $[\rho]$, so $\varphi$ induces a map $\overline{\varphi}$ from $B =
\{[\pi]\in \SS_n: \asc(\pihat) = k-1\}\setminus \PPP_n(\rs_k)$ to $\{[\rho] \in \S_r : \asc(\hat\rho) = k-1\}$. We will prove that $\varphi$ is a 2-to-1 map, which is equivalent to the fact that $\overline{\varphi}$ is a bijection.

First, we prove that $\varphi$ is surjective. Let $\rho\in \S^r$ be such that $\asc(\hat{\rho}) = k-1$.
For convenience, define $\rho_{i+r} = \rho_i$ for $1\leq i \leq r$. Define a permutation $\pi \in \S_n$ by
$$\pi_i = \begin{cases}2\rho_i-1 & \mbox {if $i$ is odd,}\\  2\rho_{i} & \mbox{if $i$ is even.}\end{cases}$$
We will show that $\asc(\hp)=k-1$, $\pi\notin\PP_n(\rs_k)$, and $\varphi(\pi)=\rho$.

As in the proof of Theorem~\ref{thm:enum2rs2mod4}, we have $\hp_{2i-1}=2\hat\rho_i$ and $\hp_{2i}=2\hat\rho_i-1$ for $1\le i\le r$.
It follows that $\asc(\hp)=\asc(\hat\rho)=k-1$. Moreover, if $\Asc(\hat\rho)=\{d_1,d_2,\cdots, d_{k-1}\}$, then $\Asc(\hp)=\{2d_1,2d_2,\cdots, 2d_{k-1}\}$, which determines the unique $-^k$-segmentation of $\hp$.

Suppose that there is some word $w \in \W_{k,n}$ such that $\pi = \PPat_{\rs_k}(w)$. By part 1 of Lemma \ref{lem: ppat},
$w_1w_2\dots w_n$ is a $\pi$-monotone word having $2d_{t+1} - 2d_t$ copies of the letter $t$ for $0\le t\le k-1$ (with the convention $d_0=0$ and $d_k=r$).
Since by construction $\{\pi_i,\pi_{i+r}\}=\{2\rho_i,2\rho_i-1\}$  for all $1\le i\le r$,
a $\pi$-monotone word with an even number of copies of each letter satisfies $w_1w_2\dots w_n=(w_1w_2\dots w_r)^2$, and so it is not primitive, which contradicts that $w\in\W_{k,n}$.
It follows that $\pi\notin\PP_n(\rs_k)$.

To show that $\varphi(\pi)=\rho$, let $s_1s_2\dots s_n$ be the $\pi$-monotone word induced by the unique $-^k$-segmentation $0\le 2d_1\le 2d_2\le \cdots\le 2d_{k-1}\le n$ of $\hp$.
By definition, $\varphi(\pi)=\PPat_{\rs_k}((s_1\dots s_r)^\infty)$.
On the other hand, $s_1s_2\dots s_r$ is a $\rho$-monotone word
having $d_{t+1}- d_t$ copies of $t$ for each $0\leq t \leq k-1$, and thus induced by the
$--$-segmentation $0\le d_1\le d_2\le \cdots\le d_{k-1}\le r$ of $\hat\rho$. Lemma~\ref{lem:iff} implies that $\PPat_{\rs_k}((s_1\dots s_r)^\infty)=\rho$.

It remains to show that every $\rho \in \S_r$ with $\asc(\hat\rho) = k-1$ has exactly two preimages under $\varphi$. Suppose that $\pi$ is such that $\varphi(\pi)=\rho$. By construction of $\varphi$, we have $\varphi(\pi_{r+1}\dots\pi_n\pi_1\dots\pi_r)=\rho$ as well.
We will show that there are no other elements in $\{\pi\in\S_n:\asc(\hp)= k-1\}\setminus\PP_n(\rs_k)$ whose image by $\varphi$ is $\rho$.

Since $r$ is odd and $\asc(\hat\rho) = k-1$, Proposition \ref{prop: description reverse} implies that $\rho\in\PP_r(\rs_k)$, so there is some $s\in\W_{k,r}$ such that $\rho = \PPat_{\rs_k}(s)$.
In fact, such an $s$ is unique by Lemma~\ref{lem:iff}, because it is the $\rho$-monotone word induced by
the unique $-^k$-segmentation of $\hat\rho$, which we denote by $0\le d_1\leq \cdots \le d_{k-1}\le n$.
Since $\varphi(\pi)=\rho$, we know that $s_1s_2\dots s_n=(s_1s_2\dots s_r)^2$ is the $\pi$-monotone word induced by the unique $-^k$-segmentation of $\hp$. It follows that this segmentation must be $0\le 2d_1\leq \cdots \le 2d_{k-1}\le n$.
Exactly as in the proof of Theorem~\ref{thm:enum2rs2mod4}, we can show that $\{\pi_i, \pi_{i+r}\}=\{2\rho_i-1, 2\rho_i\}$ for $1\le i\le r$, and that once the relative order of $\pi_1$ and $\pi_{1+r}$ is chosen, the rest of the entries of $\pi$ are uniquely determined by induction. Lemma~\ref{lem: Csigma} can be applied in this case because for every $1\le i\le r$, we have $2d_t< \pi_i,\pi_{i+r}\le 2d_{t+1}$ for some $0\leq t\leq k-1$, where $d_0=0$ and $d_k=r$.
\end{proof}

Theorem~\ref{thm:enumrs2mod4} alone does not give a practical way to compute $p_n(\rs_k)$ when $n=2\bmod4$, unlike Theorem~\ref{thm:enumrs} for the case $n\neq2\bmod4$.
Note that since $\frac{n}{2} \neq 2 \bmod 4$, the term
$C'(\frac{n}{2},k)$ on the right hand side of Equation~\eqref{eq:enumrs} can be easily computed by the recurrence in Theorem~\ref{thm:enumrs}.
Our next goal is to give a recurrence to compute $C'(n,i)$ when $n=2\bmod 4$.

\begin{theorem}
For $n\geq 3$ with $n = 2\bmod 4$ and $k\ge3$,
$$L_k(n) = \sum_{i=2}^k \left[\binom{n+k-i}{k-i}C'(n,i) - \binom{\frac{n}{2} + k -i}{k-i}C'(\frac{n}{2}, i)\right].$$
\end{theorem}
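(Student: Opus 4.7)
The plan is to double-count the set
\[
\mathcal{A}=\{([\pi],e):[\pi]\in\SS_n,\ \hp\in\C_n^{-^k},\ e\text{ is a }-^k\text{-segmentation of }\hp\}.
\]
For the direct count, group by $[\pi]$: a cyclic permutation with $\asc=i-1$ admits $\binom{n+k-i}{k-i}$ such segmentations (weakly increasing sequences $0\le e_1\le\cdots\le e_{k-1}\le n$ of multiset size $k-1$ from $\{0,1,\dots,n\}$ that contain the ascent positions), so
\[
|\mathcal{A}|=\sum_{i=2}^k\binom{n+k-i}{k-i}C'(n,i).
\]

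For the indirect count, I would build a bijection $\Phi\colon\mathcal{A}\to\WW_{k,n}\sqcup\WW_{k,r}$ with $r=n/2$. Given $([\pi],e)$, pick any representative $\pi\in[\pi]$ and form the $\pi$-monotone word $s_1\cdots s_n$ induced by $e$. Choosing a different representative merely shifts this word cyclically, so its orbit is well defined. If the word is primitive, send $([\pi],e)$ to its orbit in $\WW_{k,n}$; otherwise, by the case analysis in the proof of Lemma~\ref{lem: s1sn}, the only obstruction left in the regime $\sigma=-^k$, $n=2\bmod 4$ forces the word to be of the form $q^2$ for a primitive word $q$ of length $r$, and I send $([\pi],e)$ to $[q]\in\WW_{k,r}$. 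The inverse on the $\WW_{k,n}$ side is standard: send $[w]$ to $([\pi],e)$ where $\pi=\PPat_{\rs_k}(w^\infty)$ and $e$ is the segmentation read from the letter counts of $w$, which is a $-^k$-segmentation of $\hp$ inducing $w$ itself, by part~3 of Lemma~\ref{lem: ppat} and the construction of $\pi$-monotone words. The inverse on the $\WW_{k,r}$ side uses the doubling construction of Theorem~\ref{thm:enumrs2mod4}: given $[q]$, set $\rho=\PPat_{\rs_k}(q^\infty)$, extend indices by $\rho_{i+r}:=\rho_i$, and define $\pi_i=2\rho_i-1$ for $i$ odd, $\pi_i=2\rho_i$ for $i$ even, taking $e$ to be the segmentation induced by the letter counts of $q^2$.

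Combining the two counts, $|\mathcal{A}|=L_k(n)+L_k(r)$. Since $n=2\bmod 4$, the integer $r=n/2$ is odd and in particular $r\ne 2\bmod 4$, so Theorem~\ref{thm:enumrs} applies at size $r$ to give $L_k(r)=\sum_{i=2}^k\binom{r+k-i}{k-i}C'(r,i)$. Substituting this into the identity $\sum_{i=2}^k\binom{n+k-i}{k-i}C'(n,i)=L_k(n)+L_k(r)$ and solving for $L_k(n)$ yields the stated formula.

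The main obstacle is verifying the bijection on the $\WW_{k,r}$ side. Theorem~\ref{thm:enumrs2mod4} introduced the doubling construction only under the restriction $\asc(\hat\rho)=k-1$, so the arguments there — that the induced $\pi$-monotone word is exactly $q^2$, that the two parity choices and the $r$ cyclic shifts of $q$ all produce $\pi$'s in a single equivalence class $[\pi]$, and that no other pair in $\mathcal{A}$ with non-primitive induced word maps to $[q]$ — must be carried out for the full range $2\le i\le k$. The computations appear to be essentially the same as those used in Theorem~\ref{thm:enumrs2mod4}, with the interval constraints $2d_t<\pi_i,\pi_{i+r}\le 2d_{t+1}$ ensuring that Lemma~\ref{lem: Csigma} can still be applied at each inductive step to force $\{\pi_i,\pi_{i+r}\}=\{2\rho_i-1,2\rho_i\}$.
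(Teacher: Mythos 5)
Your proposal is correct, and at its core it is the same double count as the paper's: both arguments count the pairs $([\pi],e)$ (equivalently, the preimages under $\PPPat_{\rs_k}$ together with the defective segmentations) and both identify the defect with squares $q^2$ via the case analysis of Lemma~\ref{lem: s1sn}. Where you genuinely diverge is in how the correction term is organized. The paper works class by class: it shows that a segmentation induces a square exactly when all its parts are even, counts $\binom{\frac{n}{2}+k-i}{k-i}$ such segmentations for each bad class, and separately invokes the $2$-to-$1$ map $\varphi$ of Theorem~\ref{thm:enumrs2mod4} (applied with parameter $i$ in place of $k$) to conclude that there are $C'(\frac{n}{2},i)$ bad classes with $\asc(\hp)=i-1$. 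You instead biject the square pairs globally with $\WW_{k,\frac{n}{2}}$, obtaining the single count $L_k(\frac{n}{2})$, and then re-expand it via the identity $L_k(r)=\sum_{i=2}^k\binom{r+k-i}{k-i}C'(r,i)$ from Theorem~\ref{thm:enumrs}, which is legitimate since $r=n/2$ is odd. This buys a cleaner bookkeeping -- you never need to know how many segmentations of a given bad class are all-even, nor that the bad classes number $C'(r,i)$ -- but it does not avoid the technical heart: the well-definedness and injectivity of your inverse map on the $\WW_{k,r}$ side is exactly the doubling construction and its ``two preimages, differing by a rotation by $r$'' argument, extended from the case $\asc(\hat\rho)=k-1$ treated in Theorem~\ref{thm:enumrs2mod4} to arbitrary $\asc(\hat\rho)=i-1\le k-1$ and to an arbitrary (no longer unique) segmentation $e$. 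You correctly flag this as the remaining work and correctly observe that Lemma~\ref{lem: Csigma} still applies because $s_i=s_{i+r}$ forces $\pi_i$ and $\pi_{i+r}$ into the same block $(e_t,e_{t+1}]$; with that carried out, the substitution $\sum_{i=2}^k\binom{n+k-i}{k-i}C'(n,i)=L_k(n)+L_k(\frac{n}{2})$ gives the stated formula.
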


\begin{proof}
Since $L_k(n)=|\WW_{k,n}|$ by Lemma~\ref{lem:periodic orbits}, it is enough to show that the right hand side also counts $n$-periodic orbits of $\rs_k$.

Consider the map $\PPPat_{\rs_k}:\WW_{k,n}\to\PPP_n(\rs_k)\subseteq\{[\pi]\in\SS_n:\asc(\hp) \leq k-1\}$.
We can obtain $|\WW_{k,n}|$ by adding the cardinalities of the preimages of the elements of $\PPP_n(\rs_k)$ under this map, or equivalently, the cardinalities of the preimages of the elements of $\PP_n(\rs_k)$ under the map
$\PPat_{\rs_k}$, as in the proof of Theorems \ref{thm: enum shift} and \ref{thm:enumrs}.
However, the difference here is that not every permutation $\pi \in \S_n$ with $\asc(\hp) \leq k-1$
is in the image of this map, because when $n=2\bmod4$, Corollary~\ref{cor: rev shift} does not apply.

Let $\pi \in \S_n$ be such that $\asc(\hp)=i-1$, where $i\leq k$. If $s\in\W_{k,n}$ is such that  $\PPat_{\rs_k}(s)=\pi$, then, by
Lemma~\ref{lem: ppat}, $s_1s_2\dots s_n$ is the $\pi$-monotone word induced by some $-^k$-segmentation
$0=e_0\le e_1\le \dots \le e_k=n$ of $\hp$. There are $\binom{n+k-i}{k-i}$ such segmentations, since $\Asc(\hp)$ has to be a subset of $\{e_1,\dots,e_{k-1}\}$.
However, unlike in the proof of Theorem \ref{thm:enumrs}, it is not the case that the $\pi$-monotone word induced by every such segmentation is necessarily primitive.
When this word is not primitive, the proof of Lemma \ref{lem: s1sn} implies that it must be a {\it square}, i.e., a word of the form $q^2$ for some primitive word $q$ of length $r:=n/2$.
We will count how many times this induced word is a square.
First, note that if the $\pi$-monotone word induced by some $-^k$-segmentation of $\hp$
is a square, then the $\pi$-monotone word induced by the unique $-^i$-segmentation
of $\hp$ (which consists of the ascents of $\Asc(\hp)$, and thus is a subset of any $-^k$-segmentation)
must be a square as well. It follows that $\pi \notin \PP_n(\rs_i)$ in this case.
For each such $\pi$, any $-^k$-segmentation $0 = e_0\leq e_1\leq \dots \leq e_k = n$ of $\hp$ whose induced $\pi$-monotone word is a square must satisfy
$\Asc(\pihat) \subseteq \{e_1,\cdots, e_{k-1}\}$ (by definition of $-^k$-segmentation) and have all $e_t$ even. Conversely, every segmentation satisfying these conditions
induces a square $\pi$-monotone word, because, as shown in the proof of Theorem \ref{thm:enumrs2mod4}, for every $1\le j\le r$ the entries $\pi_j$ and $\pi_{j+r}$ have
consecutive values, with the largest one being even. The number of $-^k$-segmentations satisfying the two conditions is $\binom{r + k -i}{k-i}$.

For each $2\le i\le k$, there are $C(n,i)$ equivalence classes $[\pi]\in\SS_n$ with $\asc(\hp)=i-1$. We showed in the proof of Theorem \ref{thm:enumrs2mod4} that
$C'(r, i)$ of them satisfy that $\pi\notin  \PP_n(\rs_i)$; in other words, that $|\{[\pi]\in\SS_n: \asc(\pihat)=i-1\}\setminus \PPP_n(\rs_i)|=C'(r, i)$.
It follows that the number of preimages under  $\PPPat_{\rs_k}$ of the set $\{[\pi]\in\SS_n: \asc(\pihat)=i-1\}$ is
$$\binom{n+k-i}{k-i}C'(n,i) - \binom{r + k -i}{k-i}C'(r, i).$$
\end{proof}

\begin{corollary}\label{cor:recC'}
For $n\geq 3$ with $n = 2\bmod 4$ and $k\ge3$,
$$C'(n,k) = L_k(n) - \sum_{i=2}^{k-1} \left[\binom{n+k-i}{k-i}C'(n,i) - \binom{\frac{n}{2} + k -i}{k-i}C'(\frac{n}{2}, i)\right] + C'(\frac{n}{2}, k).$$
\end{corollary}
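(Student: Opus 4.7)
The plan is to derive this corollary as an immediate algebraic consequence of the preceding theorem by isolating the $i = k$ summand. The theorem asserts
$$L_k(n) = \sum_{i=2}^k \left[\binom{n+k-i}{k-i}C'(n,i) - \binom{\tfrac{n}{2} + k -i}{k-i}C'(\tfrac{n}{2}, i)\right]$$
for $n \geq 3$ with $n \equiv 2 \bmod 4$ and $k \geq 3$, and this is the only ingredient needed.

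First I would observe that when $i = k$, both binomial coefficients collapse: $\binom{n+k-i}{k-i} = \binom{n}{0} = 1$ and $\binom{n/2 + k - i}{k-i} = \binom{n/2}{0} = 1$. Thus the $i = k$ term of the sum simplifies to $C'(n,k) - C'(n/2, k)$. Splitting the sum as
$$L_k(n) = \sum_{i=2}^{k-1} \left[\binom{n+k-i}{k-i}C'(n,i) - \binom{\tfrac{n}{2} + k -i}{k-i}C'(\tfrac{n}{2}, i)\right] + C'(n,k) - C'(\tfrac{n}{2}, k)$$
and then solving for $C'(n,k)$ yields precisely the claimed formula.

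There is no real obstacle here; the work is entirely bookkeeping, and the only thing to verify is the binomial simplification at $i = k$. The substantive content lies in the preceding theorem, which provides the identity; the corollary merely rewrites it as a recurrence expressing $C'(n,k)$ in terms of $L_k(n)$, the values $C'(n,i)$ for $i < k$, and the values $C'(n/2, i)$ for $i \leq k$. Note that $n/2$ is odd, so the latter values can be computed from the recurrence in Theorem~\ref{thm:enumrs}, which, combined with this corollary, gives a practical procedure for tabulating $C'(n,k)$ in the case $n \equiv 2 \bmod 4$ as well.
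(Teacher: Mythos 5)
Your proposal is correct and is exactly the intended derivation: the paper states this as an immediate corollary of the preceding theorem, obtained by isolating the $i=k$ term (which equals $C'(n,k)-C'(\frac{n}{2},k)$ since both binomial coefficients are $1$) and solving for $C'(n,k)$. Your verification of the binomial collapse and the closing remark about computing $C'(\frac{n}{2},i)$ via Theorem~\ref{thm:enumrs} match the paper's discussion.
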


The equality $C'(n,k)=C(n,k)$, which holds for $n\neq2\bmod4$ (see Corollary~\ref{cor:CC'}),
is no longer valid when $n=2\bmod4$. In this case, we know that $C'(n,2)=C(n,2)+C(\frac{n}{2},2)$ by Corollary \ref{cor: k=2 relation}. For general $k$, an intricate
formula expressing $C'(n,k)$ in terms of $C(n,k)$ and $C(\frac{n}{2},i)$ for $i\le k$ can be derived from Corollary~\ref{cor:recC'}.

\section{Pattern-avoiding cyclic permutations} \label{sec:patternsincycles}

Using Theorem~\ref{thm: description of periodic patterns}, the formulas that we have found for the number of periodic patterns of the tent map, the $k$-shift and the reverse $k$-shift have implications to the enumeration of cyclic permutations
that avoid certain patterns.

We denote by $\C_n(\rho^{(1)},\rho^{(2)},\dots)=\C_n\cap\Av(\rho^{(1)},\rho^{(k)},\dots)$ the set of cycles of length $n$ avoiding the patterns $\rho^{(1)},\rho^{(2)},\dots$.
The enumeration of pattern-avoiding cycles is a wide-open problem, part of its difficulty stemming from the fact that it combines two different ways to look at permutations:
in terms of their cycle structure and in terms of their one-line notation. The question of finding a formula for $|\C_n(\sigma)|$ where $\sigma$ is a pattern of length~3 was proposed by
Richard Stanley~\cite{Sta}
and is still open. However, we are able to answer some related questions in Theorem~\ref{thm:patternsincycles}.
The first formula below, which counts unimodal cycles, was first obtained by Weiss and Rogers~\cite{WR} using methods from~\cite{MTh}.
More generally, the cycle structure of unimodal permutations has been studied by Gannon~\cite{Gan} and Thibon~\cite{Thi}.
The other formulas in Theorem~\ref{thm:patternsincycles} are new to the extent of our knowledge.

\begin{theorem}\label{thm:patternsincycles}
For $n\ge2$,
\begin{align*}
|\C_n(213,312)|=|\C_n(132,231)|&=\frac{1}{2n}\sum_{\substack{d|n \\ d \:  \mathrm{ odd} }} \mu(d) 2^{n/d},\\
|\C_n(321,2143,3142)|&=\frac{1}{n}\sum_{d|n} \mu(d) 2^{n/d},\\
|\C_n(123,2413,3412)|&=
\begin{cases} \displaystyle \frac{1}{n}\sum_{d|n} \mu(d) 2^{n/d} & \mbox{if } n \neq 2 \bmod 4, \\ \displaystyle \frac{1}{n}\sum_{d|n} \mu(d) 2^{n/d}+\frac{2}{n}\sum_{d|\frac{n}{2}} \mu(d) 2^{n/2d} & \mbox{if } n = 2\bmod 4. \end{cases}
\end{align*}
\end{theorem}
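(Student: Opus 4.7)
The plan is to reinterpret each set of pattern-avoiding cyclic permutations as $\C_n^\sigma$ for an appropriate signature $\sigma\in\{+-,-+,++,--\}$, and then invoke our enumeration of periodic patterns of the corresponding signed shift via the bijection $\theta$.

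First I would identify the four cyclic classes on the left-hand side of the theorem. The excerpt records that $\S^{+-}=\Av(213,312)$ (unimodal permutations) and $\S^{++}=\Av(321,2143,3142)$ (Grassmannian permutations, the latter being a standard consequence of Atkinson's juxtaposition theorem, cf.~\cite{Atk}). By the complementation symmetry $\pi\mapsto\pi^c$, which swaps $+$ and $-$ in the signature, we also have $\S^{-+}=\Av(132,231)$ and $\S^{--}=\Av(123,2413,3412)$. Intersecting each with $\C_n$ identifies the four sets on the left-hand sides of the theorem as $\C_n^{+-}$, $\C_n^{-+}$, $\C_n^{++}$, and $\C_n^{--}$, respectively.

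Next I would handle the three formulas whose signatures are not of the form $-^k$. By Theorem~\ref{thm: description of periodic patterns}, the map $\theta$ restricts to a bijection $\PPP_n(\Sigma_\sigma)\to\C_n^\sigma$, so $|\C_n^\sigma|=p_n(\Sigma_\sigma)$. Taking $\sigma=+-$ and $\sigma=-+$ and applying Theorem~\ref{thm:enumtent} together with Corollary~\ref{cor:-+} gives the first formula; taking $\sigma=++$ and applying Theorem~\ref{thm:enum2shift} gives the second formula.

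The remaining case is $\sigma=--$, which requires more care because Proposition~\ref{prop: description reverse} only gives an injection $\theta:\PPP_n(\rs_2)\hookrightarrow\C_n^{--}$. By definition $|\C_n^{--}|=C'(n,2)$. When $n\not\equiv 2\pmod 4$, Theorem~\ref{thm:enum2rs} gives $C'(n,2)=L_2(n)$, which is the first case of the third formula. When $n\equiv 2\pmod 4$, the extra cycles in $\C_n^{--}\setminus\theta(\PPP_n(\rs_2))$ must be accounted for: using Theorem~\ref{thm:enum2rs2mod4} to write $p_n(\rs_2)=C'(n,2)-C'(n/2,2)=L_2(n)$, and observing that $n/2$ is odd so Theorem~\ref{thm:enum2rs} yields $C'(n/2,2)=L_2(n/2)$, I obtain $|\C_n^{--}|=C'(n,2)=L_2(n)+L_2(n/2)$. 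Expanding both terms using Lemma~\ref{lem:periodic orbits} produces the second case of the third formula.

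The only step with any real content is this last case $n\equiv 2\pmod 4$, since it is the only place where $\theta$ fails to be surjective and the ``extra'' $L_2(n/2)$ contribution must be pinned down precisely using Theorem~\ref{thm:enum2rs2mod4}; everything else drops out immediately from prior results.
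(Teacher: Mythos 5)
Your proposal is correct and follows essentially the same route as the paper: identify each avoidance class with $\C_n^\sigma$ for $\sigma\in\{+-,-+,++,--\}$ via the juxtaposition descriptions, then apply Theorem~\ref{thm:enumtent}, Corollary~\ref{cor:-+}, Theorem~\ref{thm:enum2shift}, and the reverse-shift results. The only cosmetic difference is that for $\sigma=--$ you unfold Corollary~\ref{cor: k=2 relation} back into Theorems~\ref{thm:enum2rs} and~\ref{thm:enum2rs2mod4}, which is exactly how that corollary is obtained in the paper anyway.
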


\begin{proof}
The formula for $|\C_n(213,312)|$ is a consequence of  Theorem~\ref{thm:enumtent} and Corollary~\ref{cor:PPtent}, together with the fact that
a permutation is unimodal if and only if it avoids the patterns $213$ and $312$. The equality with $|\C_n(132,231)|$ follows from Corollary~\ref{cor:-+}.
 The second formula follows from Theorem~\ref{thm:enum2shift}, using that the set of permutations with at most one descent is $[\Av(21)\,\Av(21)]=\Av(321,2143,3142)$ (see~\cite{Atk}).
 Finally, the third formula is a consequence of Corollary \ref{cor: k=2 relation} and Theorem~\ref{thm:enum2shift}, noting that the class of permutations with at most one ascent is $[\Av(12)\,\Av(12)]=\Av(123,2413,3412)$.
\end{proof}

\end{document}